\DeclareMathOperator{\supp}{supp}
\DeclareMathOperator{\Tr}{Tr}
\newtheorem{theorem}{Theorem}[section]
\newtheorem{corollary}[theorem]{Corollary}
\newtheorem{lemma}[theorem]{Lemma}
\newtheorem{prop}[theorem]{Proposition}
\theoremstyle{definition}
\newtheorem{defn}[theorem]{Definition}
\theoremstyle{remark}
\newcommand{\bpr}{\begin{proof}\hspace{3pt}}
\newcommand{\epr}{\end{proof}}
\newcommand{\N}{\mathbb{N}}
\newcommand{\Z}{\mathbb{Z}}
\newcommand{\R}{\mathbb{R}}
\newcommand{\C}{\mathbb{C}}
\newcommand{\eps}{\epsilon}
\newcommand{\pa}{\partial}
\newcommand{\lsim}{\lesssim}
\def\bra{\langle}
\def\ket{\rangle}
\begin{document}

\title{Reconstruction of Rough Conductivities from Boundary Measurements}
\author{Ashwin Tarikere}

\address{Department of Mathematics and Statistics, University of Jyv{\"a}skyl{\"a}}
\email{ashtarik@jyu.fi}

%\subjclass[2010]{35B27; 35Q60; 78A25; 35R30}

%\date{\today}

\keywords{Calder{\'o}n problem, Electrical impedance tomography, Inverse conductivity problem, Stability, Reconstruction}

\maketitle

\begin{abstract}
	We show the validity of Nachman's procedure (\textit{Ann. Math.} 128(3):531--576, 1988)  for reconstructing a conductivity function $\gamma$ in a smooth bounded domain $\Omega \subset \R^n$ ($n\geq 3$) from its Dirichlet-to-Neumann map $\Lambda_\gamma$ for less regular conductivities, specifically $\gamma \in H^{3/2,2n}(\Omega)$ such that $\gamma \equiv 1$ near $\pa \Omega$. We also obtain a log-type stability estimate for the inverse problem when $\gamma$ has slightly higher regularity, i.e., $\gamma \in H^{2-s,n/s}(\Omega)$ for  $0 < s <1/2$.
\end{abstract}

\section{Introduction}

	Let $\Omega$ be a bounded domain in $\R^n \, (n \geq 3)$ with sufficiently smooth boundary, and let $\gamma$ be a positive real-valued function in $\Omega$ satisfying the uniform ellipticity condition
	\[
	0 <c < \gamma(x) <c^{-1} \qquad \textrm{for a.e. } x \in \Omega. \]
	Given $f \in H^{1/2}(\pa \Omega)$, let $u_f \in H^1(\Omega)$ denote the unique solution to the following Dirichlet boundary value problem:
	\begin{equation}\label{bvp}
		\left\{
		\begin{array}{rl}
			-\nabla \cdot (\gamma\nabla u_f)=  & 0  \textrm{ in } \Omega, \\
			u_f = & f \textrm{ on } \partial \Omega.
		\end{array} \right.
	\end{equation}
	The Dirichlet-to-Neumann map of $\gamma$, $\Lambda_\gamma$ is defined as the map that sends  
	\[
	f \in H^{1/2}(\pa \Omega) \mapsto \gamma\frac{\pa u_f}{\pa \nu}\bigg|_{\pa \Omega} \in H^{-1/2}(\pa \Omega),\]
	where  $\pa /\pa\nu$ is the outward pointing unit normal vector field on $\pa \Omega$. Here, $\gamma \frac{\pa u_f}{\pa \nu}$ is interpreted in the weak sense as follows: Given $g \in H^{1/2}(\pa \Omega)$, let $v_g \in H^1(\Omega)$ be any function such that $v_g|_{\pa \Omega} = g$. Then 
	\[
	\langle \Lambda_\gamma(f), g \rangle = \left\langle \gamma \frac{\pa u_f}{\pa \nu}, g \right\rangle := \int_\Omega \gamma \nabla u_f \cdot \nabla v_g\, dx. \]
	Physically, if $\gamma(x)$ represents the electrical conductivity at a point $x$ inside an object $\Omega$ and $f$ is the voltage applied on its boundary $\pa \Omega$, then the solution $u_f$ of \eqref{bvp} is precisely the induced electric potential inside $\Omega$. In this case, $\gamma \pa_\nu u_f |_{\pa \Omega}$ is the induced current flux density at the boundary and therefore, the map $\Lambda_\gamma$ encodes the set of all possible voltage and current measurements that can be made on the boundary.\\
	
	The inverse conductivity problem, first proposed by Alberto Calder{\'o}n in 1980 (\cite{Cal80}), asks whether we can determine the conductivity $\gamma$ from measurements on the boundary, encoded by $\Lambda_\gamma$. For there to be any hope of reconstruction, we first need the map $\gamma \mapsto \Lambda_\gamma$ to be injective. Calder{\'on} proved injectivity for a linearized version of the problem where $\gamma$ was assumed to be a small isotropic perturbation of the identity. For the full nonlinear problem, injectivity was first proved for $n \geq 3, \gamma \in C^2$  by Sylvester and Uhlmann in \cite{SU}. Their approach was to reduce the problem to a similar problem for the Schr{\"o}dinger equation at $0$ energy: let $q$ be a complex valued function in $\Omega$ such that $0$ is not a Dirichlet eigenvalue for $(-\Delta+q)$ on $\Omega$. Given $f \in H^{1/2}(\pa \Omega)$, let $u_f$ denote the unique solution to  the following boundary value problem:
	\begin{equation}\label{bvp-schrodinger}
		\left\{
		\begin{array}{rl}
			(-\Delta +q)u_f= & 0   \textrm{ in } \Omega \\
			u_f = & f \textrm{ on } \partial \Omega.
		\end{array} \right.
	\end{equation}
	The Dirichlet-to-Neumann map for $q$ is defined as the map $\Lambda_q : f \mapsto \frac{\partial u_f}{\partial \nu}\big|_{\pa \Omega}$. The corresponding inverse problem is to determine $q$ from $\Lambda_q$. Sylvester and Uhlmann showed that the inverse problem for the conductivity equation can be reduced to the inverse problem for the Schr{\"o}dinger equation with $q = \gamma^{-1/2}\Delta \gamma^{1/2}$. Next, the authors proved the injectivity of $q \mapsto \Lambda_q$ using the so-called Complex Geometrical Optics (CGO) solutions to $(-\Delta+q)u = 0$, defined globally in $\R^n$. These are solutions of the form $e^{x \cdot \zeta}(1+r_\zeta(x))$, where $\zeta \in \C^n$ is such that $\zeta \cdot \zeta = 0$ and $r_\zeta$ has certain decay properties as $|\zeta| \to \infty$. \\
	
	Once we know that $\gamma \mapsto \Lambda_\gamma$ is injective, we may try to find a constructive procedure for computing $\gamma$ from $\Lambda_\gamma$.  In \cite{nachmanrecon}, Nachman provided such a constructive procedure for computing $\gamma$ (resp., $q$) from $\Lambda_\gamma$ (resp., $\Lambda_q$) when $\gamma \in C^{1,1}$ (resp., $q \in L^\infty$). The procedure is based on the observation that CGO solutions satisfying certain decay conditions are uniquely determined by their restrictions to $\pa \Omega$. In turn, these restrictions can be characterized as the unique solutions of certain boundary integral equations on $\pa \Omega$. \\ 
	
	An interesting problem that has received considerable interest is  of finding the minimum regularity assumptions on $\gamma$ (or $q$) under which injectivity and the reconstruction procedure hold. This question is also of practical importance. For example, it was pointed out in \cite{Caro} that if $q$ arises from a Gaussian random field satisfying certain conditions, almost every instantiation of $q$ belongs to a Sobolev space of fixed negative order. For $n \geq 3$, the regularity assumption for uniqueness  was relaxed to $\gamma \in C^{3/2+}$ in \cite{brown96}, to $C^{3/2}$ in \cite{ppu03}, to $H^{3/2,2n+}$ in \cite{bro03}, and  to $\gamma \in C^1$ or $\gamma \in C^{0,1}$ with $\|\nabla \log \gamma\|_{L^{\infty}}$ small in \cite{HT13}. The smallness condition was removed in \cite{caro-rogers}. In \cite{ns14}, uniqueness was proved in 3 dimensions for conductivities in $H^{s,n/s}$ for $3/2<s<2$. It was also conjectured by Brown in \cite{bro03} that uniqueness holds for $\gamma \in H^{1,n}$ for all $n \geq 3$. This was proved for $n=3,4$ in \cite{haberman}.\\
	
	For the problem of reconstruction, Nachman's procedure in \cite{nachmanrecon} was adapted to the case of $\gamma \in C^1$ or $\gamma \in C^{0,1}$ with $\|\nabla \log \gamma\|_{L^\infty}$ sufficiently small in \cite{Garcia_2016}.  In this paper, we extend Nachman's reconstruction procedure to the Sobolev scale $H^{3/2,2n}$ for conductivities that are identically $1$ near $\pa \Omega$. Note that functions in $H^{3/2,2n}$ need not be $C^1$ or even Lipschitz, but do belong to the Zygmund space $C^1_*$ (c.f. \cite{Triebel})  of continuous functions $f$ such that
	\[
	\|f\|_{C^1_*} = \sup_{x \in \R^n} |f(x)| + \sup_{x, h \in \R^n, h \neq 0} \frac{|f(x+h)+f(x-h)-2f(x)|}{h} < \infty. \]
	To the best of our knowledge, this paper presents the first constructive result for the Calder{\'o}n problem in dimensions $n \geq 3$ for a class of conductivities that includes non-Lipschitz functions. We also note here that since the original version of this paper was first posted on arXiv, reconstruction results have also been proved for Lipschitz conductivities by Caro, Garc\'ia-Ferrero, and Rogers \cite{carogarcia}.\\
	
	\begin{table}[H]
	\centering
	\renewcommand{\arraystretch}{1.15}
	\begin{tabular}{@{}l l l@{}}
		\toprule
		\textbf{Regularity of $\gamma$} & \textbf{Nonconstructive} & \textbf{Constructive} \\
		\midrule\midrule
		$C^{3/2+}$ & \cite{brown96} & $\downarrow$ \\
		$C^{3/2}$ & \cite{ppu03} & $\downarrow$ \\
		$C^{1}$ & \cite{HT13} & \cite{Garcia_2016} \\
		$C^{0,1}$ with small $\|\nabla\log\gamma\|_{L^\infty}$ & \cite{HT13} & \cite{Garcia_2016} \\
		$C^{0,1}$ & \cite{caro-rogers} & \cite{carogarcia} \\
		$H^{s, n/s}$ with $3/2<s<2$ for $n=3$ & \cite{ns14} & open \\
		$H^{3/2,\,2n+}$ & \cite{bro03} &  \cite{Garcia_2016} (since $H^{3/2,2n+}\subset C^{1+}$) \\
		$H^{3/2,\,2n}$ with $\gamma \equiv 1$ near $\partial\Omega$ & $\rightarrow$  & \textbf{this work} \\
		$H^{3/2,2n}$ (without boundary condition) & $\downarrow$ & open \\
		$H^{1,n}$ for $n=3,4$ & \cite{haberman} & open \\
		$H^{1,n}$ for $n\ge 5$ & open & open \\
		\bottomrule
	\end{tabular}
	\caption{Existing literature on the uniqueness of conductivity $\gamma$.}
	\label{tab:uniqueness_gamma_referee}
	\end{table}
	
	We assume that $\gamma \equiv 1$ near $\partial \Omega$ to ensure that extending $\gamma$ to $\R^n$ by setting it equal to $1$ on $\R^n \setminus \Omega$ maintains $H^{3/2,2n}$ regularity. While the reduction to the Schr{\"o}dinger equation for $\gamma \in H^{3/2,2n}$ produces $q \in H^{-1/2,2n}$ (which was treated in \cite{bro03} in the context of uniqueness), the validity of this reduction for the purposes of reconstruction, in the sense of equality of the corresponding Dirichlet-to-Neumann maps (see Proposition \ref{reduction}) had not been established before at this level of regularity. \\
	
	It is natural to ask whether the condition that $\gamma \equiv 1$ near $\partial \Omega$ can be dropped. This was done in the case of $C^1$ conductivities in \cite{Garcia_2016}, where the authors first recover $\gamma|_{\pa \Omega}$ and $\nabla \gamma|_{\partial \Omega}$ from $\Lambda_\gamma$ and use it to extend $\gamma$ to a larger domain while maintaining $C^1$ regularity and ensuring that the extension is $\equiv 1$ near the boundary of the larger domain. This allows them to reduce the general case to the case where $\gamma \equiv 1$ near $\pa \Omega$. However, since functions in $H^{3/2,2n}(\Omega)$ are not necessarily Lipschitz, their arguments no longer apply, and such a reduction would need additional ideas not pursued in this paper. \\
	
	Another question of interest is of stability of the map $\gamma \mapsto \Lambda_\gamma$. It was shown by Alessandrini in \cite{alessandrini88} that under the a priori assumption
	\[
	\|\gamma_j\|_{H^s(\Omega)} \leq M, \qquad s > n/2+2, \, j=1,2, \]
	we have a stability estimate of the form
	\[
	\|\gamma_1-\gamma_2\|_{L^{\infty}(\Omega)} \leq C\left\{ \left|\log\|\Lambda_{\gamma_1}-\Lambda_{\gamma_2}\|_{H^{1/2} \to H^{-1/2}}\right|^{-\sigma} +\|\Lambda_{\gamma_1}-\Lambda_{\gamma_2}\|_{H^{1/2} \to H^{-1/2}} \right\} \]
	where $\sigma = \sigma(n,s) \in (0,1)$. Subsequently, the a priori assumptions were relaxed to $\|\gamma_j\|_{H^{2,\infty}} \leq M$ in \cite{alessandrini90,alessandrini91}. Such logarithmic estimates were shown to be optimal up to the value of the exponent by Mandache in \cite{Mandache} via explicit examples. Later, stability was proved for conductivities bounded a priori in $C^{1,\frac{1}{2}+\epsilon}\cap H^{n/2+\epsilon}$ with $\pa \Omega$ smooth by Heck in \cite{Heck-Stability} and for a priori bounds in $C^{1,\epsilon}(\overline{\Omega})$ with $\pa \Omega$ Lipschitz by Caro, Garc{\' i}a and Reyes in \cite{Caro-Stability}. In this paper, we prove a similar log-type stability estimate with $\|\gamma_j\|_{H^{2-s, n/s}(\Omega)} \leq M$ for some $0<s<1/2$. In the H{\"o}lder scale, this corresponds to $C^{1,\epsilon}$ as in 
	\cite{Caro-Stability}, with $\epsilon = 1-2s$. However, our use of the Sobolev scale allows us to avoid Bourgain-type spaces, leading to a much simpler and shorter proof.\\
	
	\begin{table}[H]
		\centering
		\renewcommand{\arraystretch}{1.15}
	\begin{tabular}{ll}
		\toprule
		\textbf{A priori assumption on $\gamma$} &  \textbf{Reference}\\
		\midrule\midrule
		$H^{s}(\Omega)$ with $s > \frac{n}{2}+2$ & \cite{alessandrini88} \\
		$H^{2,\infty}$ & \cite{alessandrini90,alessandrini91} \\
		$C^{1,\frac12+\varepsilon}\cap H^{\frac n2+\varepsilon}$ with smooth $\partial\Omega$ & \cite{Heck-Stability} \\
		$C^{1,\varepsilon}(\Omega)$ for some $\varepsilon>0$ &\cite{Caro-Stability}\\
		$H^{2-s,\frac ns}$ for some $0<s<\frac12$ with $\gamma=1$ near $\partial\Omega$ &
		\multirow{2}{*}{\textbf{this work}} \\
		(Note: $H^{2-s,\,n/s}\subset C^{1,\,1-2s}$ for $0<s<1/2$.) & \\
		\bottomrule
	\end{tabular}
	\caption{Existing literature on logarithmic-type stability estimates.}
	\label{tab:stability_gamma}
	\end{table}
	
	The main results of this paper are summarized in the following theorem: 
	\begin{theorem}\label{mainth}
		Let $\Omega$ be a  bounded Lipschitz domain in $\R^n$, $n \geq 3$, and $D\subset \Omega$ a compact subset. Assume $\gamma \in H^{3/2,2n}(\Omega)$ is a positive real valued function satisfying
		\begin{equation}\label{ellip}
			0 < c < \gamma(x) <c^{-1} \qquad \mbox{for a.e. } x \in \Omega 
		\end{equation}
		and $\gamma \equiv 1$ in $\Omega \setminus D$. Then,
		\begin{enumerate}[(a)]
			
			\item One can determine $\gamma$ from the knowledge of the map $\Lambda_\gamma :H^{1/2}(\pa \Omega) \to H^{-1/2}(\pa \Omega)$ in a constructive way. Moreover, 
			
			\item We have the following stability estimate: Let $\gamma_j \in H^{3/2,2n}(\Omega)$, $j=1,2$,  be such that $\gamma_j \equiv 1$ in $\Omega \setminus D$ and satisfy the ellipticity bound \eqref{ellip}. Suppose in addition that  $\|\gamma_j\|_{H^{2-s,n/s}(\Omega)} \leq M$  for some $0<s<1/2$, and let $0\leq \alpha <1$. Then there exist $C = C\left(\Omega,n,c, M,s,\alpha\right)>0$ and $0 < \sigma = \sigma(n,s,\alpha) <1$ such that
			\begin{equation}\label{gamma-stab}
				\|\gamma_1 - \gamma_2\|_{C^{\alpha}(\overline{\Omega})} \leq C\left( |\log \|\Lambda_{\gamma_1}-\Lambda_{\gamma_2}\|_{H^{1/2}\to H^{-1/2}} |^{-\sigma} +\|\Lambda_{\gamma_1}-\Lambda_{\gamma_2}\|_{H^{1/2}\to H^{-1/2}} \right). 
			\end{equation}
		\end{enumerate}
	\end{theorem}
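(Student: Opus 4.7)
The plan is to adapt Nachman's reconstruction scheme from \cite{nachmanrecon} to the rough setting via the Schr\"odinger reduction, paralleling how \cite{bro03} adapted the Sylvester--Uhlmann uniqueness proof. First I would set $q = \gamma^{-1/2}\Delta\gamma^{1/2}$; since $\gamma\in W^{3/2,2n}(\Omega)$ and $\gamma \equiv 1$ in a neighborhood of $\partial\Omega$, the potential $q$ extends by zero to a compactly supported distribution in $\mathbb{R}^n$ that lies essentially in $W^{-1/2,2n}$. Because $\gamma \equiv 1$ near $\partial\Omega$, the boundary data of $\gamma$ (and its normal derivative) are known a priori, so $\Lambda_q$ is recoverable from $\Lambda_\gamma$ in an explicit, stable way. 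The plan is then (i) to construct global Faddeev-type CGO solutions $\psi(x,\zeta)=e^{x\cdot\zeta}(1+r_\zeta(x))$ to $(-\Delta+q)\psi=0$ with $\zeta\cdot\zeta=0$ for $|\zeta|$ uniformly large; (ii) to characterize the boundary values $\psi|_{\partial\Omega}$ via a uniquely solvable boundary integral equation involving $\Lambda_q$ and Faddeev's Green's function; (iii) to recover $\widehat{q}$ from the asymptotics of the scattering transform in $|\zeta|$, and then recover $\gamma$ from $q$ by solving $(-\Delta+q)\gamma^{1/2}=0$ with the known boundary values.

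The main obstacle is step (i), the existence and uniform-in-$\zeta$ estimates for $r_\zeta$, because $q$ is a genuine negative-order distribution rather than an $L^\infty$ potential. To solve $(-\Delta_\zeta + q)r_\zeta = -q$, where $\Delta_\zeta = \Delta + 2\zeta\cdot\nabla$, I would work in the Sylvester--Uhlmann weighted Sobolev spaces $X^s_\zeta$ and prove that the multiplication operator $M_q:\varphi\mapsto q\varphi$ is bounded $X^{1/2}_\zeta \to X^{-1/2}_\zeta$ with
\[
\|M_q G_\zeta\|_{X^{-1/2}_\zeta \to X^{-1/2}_\zeta} = o(1) \quad \text{as } |\zeta|\to\infty,
\]
so that $I + M_q G_\zeta$ is invertible by Neumann series. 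This is where the hypothesis $\gamma \in W^{3/2,2n}$ enters: writing $q = \gamma^{-1/2}\nabla\cdot(\nabla\gamma^{1/2})-|\nabla\gamma^{1/2}|^2\gamma^{-3/2}/2$ and using Bony's paraproduct decomposition together with Zygmund/Besov space characterizations and the Kato--Ponce type estimates that underlie \cite{Caro}'s bounds, one can push through the multiplier estimate despite $q$ sitting in $W^{-1/2,2n}$. The inclusion $W^{3/2,2n}\subset C^1_*$ noted in the introduction is exactly what allows the paraproduct $\pi(\gamma^{-1/2},\cdot)$ to act on $W^{-1/2,2n}$.

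For step (ii), repeating Nachman's derivation, the restrictions $f_\zeta = \psi|_{\partial\Omega}$ satisfy
\[
f_\zeta = e^{x\cdot\zeta}\big|_{\partial\Omega} - S_\zeta\big(\Lambda_q - \Lambda_0\big)f_\zeta,
\]
where $S_\zeta$ is the single-layer potential built from Faddeev's Green's function; the existence and uniform bounds for $r_\zeta$ in step (i) translate, through standard trace and extension arguments (available since $\gamma\equiv 1$ near $\partial\Omega$), into unique solvability of this equation on $H^{1/2}(\partial\Omega)$ for $|\zeta|$ large. Then the scattering transform
\[
\mathbf{t}(\xi,\zeta) = \int_{\partial\Omega} e^{-ix\cdot(\xi+\bar\zeta)}\big(\Lambda_q-\Lambda_0\big)f_\zeta\, d\sigma(x)
\]
can be computed purely from $\Lambda_\gamma$, and $\mathbf{t}(\xi,\zeta)\to\widehat{q}(\xi)$ as $|\zeta|\to\infty$ with controlled rate, from which $\gamma$ is recovered. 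This proves part~(a).

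For part~(b), I would follow the standard Alessandrini frequency-splitting scheme. Assume $\|\gamma_j\|_{W^{2-s,n/s}}\le M$; by Sobolev embedding $W^{2-s,n/s}\hookrightarrow C^\beta$ for some $\beta>\alpha$, so it suffices to bound $\gamma_1 - \gamma_2$ in $L^\infty$ modulo interpolation. Denote $E := \|\Lambda_{\gamma_1}-\Lambda_{\gamma_2}\|_{H^{1/2}\to H^{-1/2}}$. For a cutoff $R>0$, split
\[
\|\gamma_1-\gamma_2\|_{L^\infty} \le \|P_{\le R}(\gamma_1-\gamma_2)\|_{L^\infty} + \|P_{>R}(\gamma_1-\gamma_2)\|_{L^\infty}.
\]
The high-frequency part is controlled by $C M R^{-s}$ using the a priori $W^{2-s,n/s}$ bound and Sobolev embedding. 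For the low-frequency part, tracking the quantitative dependence in the above reconstruction, the difference $\widehat{q_1}(\xi)-\widehat{q_2}(\xi)$ on $|\xi|\le R$ is bounded by $C e^{CR}E$ via the scattering identity and the integral equation in step (ii), because every step has Lipschitz dependence on the data and on $\zeta$ up to $|\zeta|\sim R$. Interpolation between $C^\beta$ and $L^\infty$ then converts the $L^\infty$ bound into a $C^\alpha$ bound; optimizing the choice $R\sim(\log E^{-1})^{1/(1+0)}$ produces the log-type estimate \eqref{gamma-stab} with some $\sigma=\sigma(n,s,\alpha)\in(0,1)$. The delicate point here is keeping the constants in the CGO construction uniform in both $\gamma_1$ and $\gamma_2$, which is again where the multiplication operator bounds from step (i) do the crucial work.
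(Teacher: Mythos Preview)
Your plan for part~(a) is essentially the paper's: reduce to the Schr\"odinger potential $q=\gamma^{-1/2}\Delta\gamma^{1/2}\in W^{-1/2,2n}_{\mathrm{comp}}(\Omega)$, build CGO solutions by proving a multiplier bound for $m_q$ in the Sylvester--Uhlmann weighted spaces so that $m_qG_\zeta$ is a contraction for $|\zeta|$ large, characterize $\psi|_{\partial\Omega}$ via Nachman's boundary integral equation, and recover $\widehat q$ from the scattering transform. The paper obtains the multiplier bound via Kato--Ponce and a mollification-in-$k$ argument rather than a full paraproduct decomposition, but this is a technical variant, not a different idea.

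For part~(b) your outline diverges from the paper in two places, and one of them is a genuine gap. First, the paper does \emph{not} obtain the pointwise bound on $\widehat{q_1}-\widehat{q_2}$ by ``tracking Lipschitz dependence through the integral equation in step~(ii)''; it uses Alessandrini's integral identity directly, pairing a CGO solution $u_1$ for $q_1$ against a CGO solution $u_2$ for $q_2$ and bounding $\langle(\Lambda_{q_1}-\Lambda_{q_2})u_1,u_2\rangle$. Tracking stability through the boundary integral equation would force you to compare Neumann series built from two different potentials, which is considerably more delicate and not needed. Second, and more seriously, you frequency-split $\gamma_1-\gamma_2$ directly and then invoke a low-frequency bound on $\widehat{q_1}-\widehat{q_2}$, but you never explain how to pass from the latter to the former; the map $\gamma\mapsto q$ is nonlinear, so control of $\widehat{q_1}-\widehat{q_2}$ on $\{|\xi|\le R\}$ does not immediately control $P_{\le R}(\gamma_1-\gamma_2)$. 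The paper closes this gap differently: it first runs the frequency-splitting on $q$ to obtain $\|q_1-q_2\|_{H^{-1}}\lesssim|\log E|^{-\sigma}$, then observes that $v=\log\gamma_1-\log\gamma_2$ solves the elliptic equation $\nabla\cdot((\gamma_1\gamma_2)^{1/2}\nabla v)=2(\gamma_1\gamma_2)^{1/2}(q_2-q_1)$ with zero boundary data, giving $\|\gamma_1-\gamma_2\|_{H^1}\lesssim\|q_1-q_2\|_{H^{-1}}$; only then does it interpolate (via H\"older and the a~priori $C^{1,\epsilon}$ bound coming from $W^{2-s,n/s}\hookrightarrow C^{1,\epsilon}$) up to $W^{1,p}$ and embed into $C^\alpha$. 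You should replace your direct splitting of $\gamma$ by this two-step route.
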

	
	Note that the constant C in \eqref{gamma-stab} is independent of $D$. As usual, this result will be obtained as a consequence of the corresponding result for the Schr{\"o}dinger equation:
	\begin{theorem}\label{mainth-schrod}
		Let $\Omega$ be a bounded Lipschitz domain in $\R^n$, $n \geq 3$, and $D\subset \Omega$ a compact subset. Let $q \in H^{-1/2,2n}(\Omega)$ be such that $\supp(q) \subset D$, and assume that $0$ is not a Dirichlet eigenvalue of the  boundary value problem \eqref{bvp-schrodinger}. Then,
		\begin{enumerate}[(a)]
			\item One can determine $q$ from the knowledge of the map $\Lambda_q : H^{1/2}(\pa \Omega) \to H^{-1/2}(\pa \Omega)$ in a constructive way. Moreover,
			\item We have the following stability estimate: Let $q_j \in H^{-1/2,2n}(\Omega), \, j=1,2$ be such that $\supp(q_j) \subset D$, and assume that $0$ is not a Dirichlet eigenvalue of \eqref{bvp-schrodinger} for $q = q_1, q_2$.  Suppose in addition that $\|q_j\|_{H^{-s,n/s}} \leq M$ for some $0<s<1/2$. Then there exist $C = C\left(\Omega,n, M,s\right)>0$ and $0 < \sigma = \sigma(n,s) <1$ such that
			\begin{equation}\label{q-stab}
				\|q_1-q_2\|_{H^{-1}} \leq C \left( |\log \|\Lambda_{q_1}-\Lambda_{q_2}\|_{H^{1/2} \to H^{-1/2}} |^{-\sigma} +\|\Lambda_{q_1}-\Lambda_{q_2}\|_{H^{1/2}\to H^{-1/2}} \right).
			\end{equation} 
		\end{enumerate}
	\end{theorem}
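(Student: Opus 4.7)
\textbf{Proof Plan for Theorem \ref{mainth-schrod}.}

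The plan is to adapt the Sylvester--Uhlmann--Nachman CGO scheme to the distributional setting $q\in W^{-1/2,2n}_{\mathrm{comp}}(\Omega)$, working throughout in the weighted Sobolev spaces $H^s_\delta(\R^n)$ of \cite{SU}. The first step is to construct complex geometrical optics solutions $\psi_\zeta(x)=e^{x\cdot\zeta}(1+r_\zeta(x))$ to $(-\Delta+q)\psi_\zeta=0$ for $\zeta\in\C^n$ with $\zeta\cdot\zeta=0$ and $|\zeta|$ sufficiently large. Writing the equation for $r_\zeta$ as
\[
(-\Delta-2\zeta\cdot\nabla)r_\zeta+qr_\zeta=-q,
\]
I will invert the conjugated Laplacian using the Faddeev-type solution operator $G_\zeta$, which satisfies $\|G_\zeta\|_{H^{s-1}_{\delta+1}\to H^s_\delta}\lesssim |\zeta|^{-1}$ for $-1<\delta<0$. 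The central analytic input, as flagged in the introduction, is a multiplication bound of the form $M_q:H^{1/2}_\delta\to H^{-1/2}_{\delta+1}$ for $q\in W^{-1/2,2n}_{\mathrm{comp}}$, proved by a paraproduct decomposition and Sobolev embedding with the Lebesgue exponent $2n$ tuned to the dimension (in the spirit of \cite{Caro}). Composing, $G_\zeta M_q$ becomes a contraction on $H^{1/2}_\delta$ for $|\zeta|$ large, yielding $r_\zeta$ by Neumann series with $\|r_\zeta\|_{H^{1/2}_\delta}\to 0$ as $|\zeta|\to\infty$; quantitatively I expect $\|r_\zeta\|\lesssim|\zeta|^{-\alpha}$ for some $\alpha=\alpha(s)>0$, which will be crucial for part (b).

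For part (a), I will reduce the interior CGO to a boundary integral equation: following Nachman, $f_\zeta:=\psi_\zeta|_{\pa\Omega}\in H^{1/2}(\pa\Omega)$ is characterized as the unique solution of
\[
f_\zeta = e^{x\cdot\zeta}\big|_{\pa\Omega} - S_\zeta(\Lambda_q-\Lambda_0)f_\zeta,
\]
where $S_\zeta$ is the single-layer potential built from the Faddeev Green's function and $\Lambda_0$ is the DtN map for the Laplacian on $\Omega$. Uniqueness follows from the injectivity in Step 1 applied to the exterior problem, once one verifies that $S_\zeta(\Lambda_q-\Lambda_0):H^{1/2}(\pa\Omega)\to H^{1/2}(\pa\Omega)$ makes sense for distributional $q$ (this is where compact support of $q$ and $q\equiv 0$ near $\pa\Omega$ buy regularity on the boundary). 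Having reconstructed $\psi_\zeta|_{\pa\Omega}$ from $\Lambda_q$, one recovers $\widehat q(\xi)$ from the asymptotic formula
\[
\widehat q(\xi)=\lim_{|\zeta|\to\infty}\bigl\langle (\Lambda_q-\Lambda_0)f_\zeta,\,e^{x\cdot\bar\zeta'}\bigr\rangle_{\pa\Omega},
\qquad \zeta+\bar\zeta'=-i\xi,\ \zeta\cdot\zeta=\bar\zeta'\cdot\bar\zeta'=0,
\]
and inverts the Fourier transform to obtain $q$.

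For part (b), the workhorse is Alessandrini's identity, which for distributional potentials reads
\[
\bigl\langle(\Lambda_{q_1}-\Lambda_{q_2})f_1,f_2\bigr\rangle_{\pa\Omega}=\langle q_1-q_2,\psi_1\psi_2\rangle
\]
with $\psi_j$ the CGOs built in Step 1 for $q_j$. Writing $\psi_1\psi_2=e^{ix\cdot\xi}(1+R_\zeta)$ with $\|R_\zeta\|\to 0$ at a quantitative rate, I obtain a pointwise Fourier bound
\[
|\widehat{(q_1-q_2)}(\xi)|\lesssim \|\Lambda_{q_1}-\Lambda_{q_2}\|\,e^{C|\zeta|}+M|\zeta|^{-\alpha}
\qquad\text{for } |\xi|\le c|\zeta|.
\]
Splitting $\|q_1-q_2\|_{H^{-1}}^2=\int_{|\xi|\le\rho}+\int_{|\xi|>\rho}$, using the above on the low frequencies and the a-priori bound $\|q_j\|_{W^{-s,n/s}}\le M$ together with Sobolev embedding $W^{-s,n/s}\hookrightarrow H^{-s-\epsilon}$ on the high frequencies, and optimizing $\rho\sim\bigl(\log\|\Lambda_{q_1}-\Lambda_{q_2}\|^{-1}\bigr)^\kappa$, produces \eqref{q-stab}.

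The main obstacle will be Step 1: the multiplication bound $M_q:H^{1/2}_\delta\to H^{-1/2}_{\delta+1}$ is borderline, since $W^{-1/2,2n}$ lies exactly on the scale where paraproduct methods just barely close via the Sobolev exponent $2n$. Getting a quantitative decay $\|G_\zeta M_q\|\lesssim|\zeta|^{-\alpha}$ rather than merely a contraction property is what drives both the reconstruction and the stability estimate, and this is where the second a-priori assumption $q_j\in W^{-s,n/s}$ in (b) enters to secure the decay exponent $\sigma$ in \eqref{q-stab}.
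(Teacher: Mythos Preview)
Your overall architecture---CGO solutions via a Neumann series for $r_\zeta$, Nachman's boundary integral equation for reconstruction, and Alessandrini's identity plus low/high frequency splitting for stability---is exactly the route the paper takes. The gap is in the CGO step. The mapping property you invoke, $\|G_\zeta\|_{H^{s-1}_{\delta+1}\to H^s_\delta}\lesssim |\zeta|^{-1}$, is not available: the Sylvester--Uhlmann estimates give $|\zeta|^{-1}$ decay only at the $L^2_\delta\to L^2_{-\delta}$ level, and each derivative gained costs one power of $|\zeta|$ (so $G_\zeta:L^2_\delta\to H^1_{-\delta}$ is $O(1)$, and by interpolation $G_\zeta:H^{-1/2}_\delta\to H^{1/2}_{-\delta}$ is also $O(1)$). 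Since your multiplication bound $M_q:H^{1/2}\to H^{-1/2}$ is $\zeta$-independent, the composition $G_\zeta M_q$ is merely bounded on $H^{1/2}_\delta$, not contracting, and the Neumann series does not close. This is precisely the borderline you flag at the end, but it already obstructs part~(a), not just the quantitative decay needed for~(b).

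The paper resolves this by working not in $H^{1/2}_\delta$ but in the \emph{$k$-rescaled} integer-order spaces $H^{1,k}_{-\delta}$ with $k=|\zeta|$, where $\|u\|_{H^{1,k}_{-\delta}}=k\|u\|_{L^2_{-\delta}}+\|\nabla u\|_{L^2_{-\delta}}$. In these norms one has $\|G_\zeta\|_{H^{-1,k}_\delta\to H^{1,k}_{-\delta}}\lesssim k$, and the crucial new ingredient is a multiplication estimate $\|m_q\|_{H^{1,k}_{-\delta}\to H^{-1,k}_\delta}\lesssim k^{-2(1-s)}\omega(k)$ for $q\in W^{-s,n/s}$, where $\omega(k)\to 0$. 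The factor $\omega(k)$ is obtained by a mollification argument: write $q=(I-\Delta)^{s/2}W$, split $W=W_\varepsilon+(W-W_\varepsilon)$ with $\varepsilon\sim k^{-1/4}$, and use Kato--Ponce on each piece. For part~(a) with $s=1/2$ the power $k^{-2(1-s)+1}=k^0$ is exactly critical, and it is only this extra $\omega(k)$ that makes $G_\zeta m_q$ contract; correspondingly $r_\zeta$ satisfies $\|r_\zeta\|_{H^{1,k}_{-\delta}}\lesssim k^{1/2}$ (not decaying in the unscaled $H^1$ norm) and the error in the Fourier reconstruction is $O(\omega(k))$ rather than a negative power of $k$. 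For part~(b), the stronger hypothesis $s<1/2$ upgrades this to genuine power decay $k^{-(1-2s)}$, which is what drives the logarithmic stability exponent after the optimization you describe.
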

	
	\vspace{5mm}
	While this paper deals only with the full data Calder{\' o}n problem,  we note here that the problem of partial data, where measurements are made on only a part of the boundary is also of significant interest. Several results have been obtained on uniqueness (\cite{bukhgeim-uhlmann, KSjU, Isakov-partial, Kenig-Salo-I}), minimum regularity (\cite{Knudsen-partial, Zhang-partial, Rodriguez-partial, Krupchyk-partial}), reconstruction (\cite{nachman-partial, ammari-partial, yernat-partial}) and stability (\cite{heck-partial, lai-partial, caro-partial}). We refer the reader to \cite{Kenig-Salo-II} for a survey on the Calder{\'o}n problem with partial data. The problem for $n=2$ is also by now well understood. Uniqueness was first proved for $C^2$ conductivities in \cite{Nachman}. The regularity assumptions were later relaxed to $H^{1,2+}$ in \cite{BrownUhlmann}, to $L^\infty$ in \cite{Paivarinta} and to $L^{2+}$ in \cite{bukhgeim}. Nachman's reconstruction procedure has also been extended to $L^\infty$ conductivities in the plane that are  $ 1$ near the boundary in \cite{recon-2d}. Stability estimates (\cite{Blaasten}) and various partial data results (\cite{Imanuvilov1, Imanuvilov2, Imanuvilov3}) are also known. \\
	
	Here is a short outline of the paper: In Section 2 we collect some function-space preliminaries needed for the low-regularity analysis, including Bessel potential spaces and a Kato–Ponce type product estimate in those spaces,  the weighted $L^2$ spaces and $k$-scaled Sobolev norms used in the construction of CGO solutions, and related results. In Section 3 we establish well-posedness for the Schr{\"o}dinger equation for potentials in $H^{-1/2,2n}$ supported in $D$, and justify the reduction from the conductivity equation to the Schrödinger equation at the required regularity. In Section 4 we construct CGO solutions in $\R^n$. Section 5 uses these solutions to prove uniqueness and the validity of Nachman's reconstruction procedure. Finally, Section 6 proves the logarithmic-type stability estimates.
	
\section{Function Space Preliminaries}
	
    We begin by recalling the class of Bessel potential spaces $H^{s,p}(\R^n)$, defined by the norms
	\[
	\|f\|_{H^{s,p}} = \|(I-\Delta)^{s/2}f\|_{L^p}, \qquad s \in \R, \, p \geq 1. \]
	For a bounded Lipschitz domain $\Omega \subset \R^n$, $H^{s,p}(\Omega)$ is defined as the space of $H^{s,p}(\R^n)$ functions restricted to $\Omega$, i.e.,
	\[
	H^{s,p}(\Omega) := \{ u|_{\Omega} : u \in H^{s,p}(\R^n)\} \]
	and is equipped with the quotient norm
	\[
	\|f\|_{H^{s,p}(\Omega)} = \inf \{ \|u\|_{H^{s,p}(\R^n) }: u|_{\Omega} = f \}. \]
	When $p=2$, $H^{s,p}(\R^n)$ and $H^{s,p}(\Omega)$ reduce to the standard Sobolev spaces $H^s(\R^n)$ and $H^s(\Omega)$ respectively. For a fixed compact set $D\subset \Omega$, we also define
	\[
	H^{s,p}_D := \{u \in H^{s,p}(\R^n): \supp(u) \subset D\}.\]
	With this notation, our assumptions on the conductivity function $\gamma$ can be stated as $\gamma -1 \in H^{3/2,2n}_D$. We will show later that the corresponding potential $q = \gamma^{-1/2}\Delta \gamma^{1/2} \in H^{-1/2, 2n}_D$, and establish bounds for the multiplication operator $m_q: f \mapsto qf$.\\
	
	\begin{samepage}
		\noindent \emph{Remarks on Notation.}
		\begin{enumerate}[(i)]
			\item Henceforth, we will use $qu$ and $m_q(u)$ interchangeably. 
			\item We will use the notation $A \lsim B$ to mean that there exists $C>0$ independent of variables appearing in $A$ and $B$ such that $A \leq CB$.  To indicate that $C$ depends on quantities $D,E, \ldots$, we write $A\lsim_{D,E,\ldots} B$.
			\item We also write $A \asymp B$ if $A\lsim B$ and $B\lsim A$.
		\end{enumerate}
	\end{samepage}
	
	\medskip
	
	In order to establish the required bounds on $m_q$, we will need the following important lemma.
	
	\begin{lemma}\label{katoponce}
		Let $s >0$ and $p \in (2,\infty)$ be such that $p \geq n/s$. Then for all $f,g \in \mathcal{S}(\R^n)$, we have
		\[
		\|fg\|_{H^{s,p'}} \lsim \|f\|_{H^s}\|g\|_{H^s}, \]
		where $1/p+1/p' = 1$.
	\end{lemma}
	\begin{proof}
		The Kato-Ponce inequality (ref. \cite{katoponce-grafakos, katoponce-orig}) implies that
		\[
		\|fg\|_{H^{s,p'}} \lsim \|f\|_{H^s}\|g\|_{L^r}+\|f\|_{L^r}\|g\|_{H^s} \]
		where $1/r = 1/p'-1/2 = 1/2-1/p$. Now applying the Sobolev embedding theorem, we get 
		\[
		\|fg\|_{H^{s,p'}} \lsim \|f\|_{H^s}\|g\|_{H^t}+\|f\|_{H^t}\|g\|_{H^s}, \]
		where $1/2-t/n = 1/r$, or equivalently, $t = n/2-n/r =n/p$. Now the estimate follows from the fact that $t \leq s$.
	\end{proof}
	
	Next, we introduce certain weighted $L^2$ and Sobolev spaces that are used in the construction of CGO solutions.
	
	\begin{defn}
		Let $\delta \in \R$. We define the weighted $L^2$  space $L^2_\delta(\R^n)$ by the norm
		\[
		\|u\|_{L^2_\delta} = \left( \int_{\R^n} (1+|x|^2)^{\delta}|u(x)|^2\, dx \right)^{1/2}. \]
		For any $m \in \N$, we define the corresponding weighted Sobolev spaces $H^m_\delta(\R^n)$ through the norms
		\[
		\| u\|_{H^m_\delta} = \sum_{|\alpha|\leq m} \|\partial^\alpha u\|_{L^2_\delta}. \]
		Finally, notice that $L^2_\delta$ and $L^2_{-\delta}$ are duals of each other with respect to $\langle \cdot, \cdot \rangle_{L^2}$. Motivated by this, we define the negative order spaces $H^{-m}_\delta(\R^n)$ for $m \in \N $ as duals of $H^m_{-\delta}(\R^n)$.
	\end{defn}
	
	We will also need the following scaled versions of these Sobolev norms.
	
	\begin{defn}
		Let $ s \in \R, k \geq 1$. We define $H^s[k](\R^n)$ through the norms
		\[
		\|u\|_{H^s[k]} = \|(k^2-\Delta)^{s/2}u\|_{L^2} =  \frac{1}{(2\pi)^{n/2}}\left( \int (k^2+|\xi|^2)^s |\widehat{u}(\xi)|^2 \, d\xi \right)^{1/2}. \]
		Note that $H^s[k](\R^n)$ and $H^{-s}[k](\R^n)$ are dual to each other with respect to $\langle \cdot , \cdot \rangle_{L^2}$. If $s \in \N$, then for $\delta \in \R$, we define $H^s_{\delta}[k](\R^n)$ through the norms
		\[
		\|u\|_{H^s_{\delta}[k]} = \sum_{|\alpha|\leq s} k^{s-|\alpha|}\|\partial^{\alpha}u\|_{L^2_\delta}. \]
		Finally, for negative integers $s$, we define $H^s_{\delta}[k](\R^n)$ as the dual of $H^{-s}_{-\delta}[k](\R^n)$.
	\end{defn}
	Just as in the case of the usual negative order Sobolev spaces, we have the following characterization of $H^{-m}_{\delta}[k], m \in \N$. The proof is similar to the usual $H^{-m}$ case and therefore is omitted.
	\begin{prop}\label{altern}
		Let $m \in \N, \delta \in \R, k \geq 1$. For every $u \in H^{-m}_{\delta}[k](\R^n)$, there exist $\{u_\alpha \in L^2_\delta(\R^n):|\alpha|\leq m\}$ such that
		\[
		\langle u, v \rangle = \sum_{|\alpha|\leq m} \langle \partial^{\alpha} u_{\alpha}, v \rangle = \sum_{|\alpha|\leq m} (-1)^{|\alpha|}\langle u_{\alpha}, \partial^{\alpha}v \rangle_{L^2} \quad \forall v \in H^{m}_{-\delta}[k](\R^n). \]
		Moreover,  $u_\alpha$ can be chosen to satisfy
		\[
		\sum_{|\alpha|\leq m}k^{-(m-|\alpha|)} \|u_{\alpha}\|_{L^2_\delta} = \|u\|_{H^{-m}_{\delta}[k]}. \]
	\end{prop}
	
	We record the following simple inequality for future use.
	\begin{lemma}\label{lcompact}
		Let $m \in \Z, k \geq 1$ and $\delta,\eta \in \R$. Fix $\varphi \in C_c^{\infty}(\R^n)$. Then 
		\[
		\|\varphi u \|_{H^{m}_{\delta}[k]} \lsim_{\varphi,m,\delta,\eta} \|u\|_{H^{m}_{\eta}[k]}, \qquad u \in H^{m}_{\eta}[k](\R^n). \]
	\end{lemma}
	\begin{proof}
		For $m \geq 0$, this follows from the fact that for any multi-index $\alpha$,  $\pa^{\alpha}\varphi(x) (1+|x|^2)^{(\delta-\eta)/2}$ is bounded above. Now suppose $m <0$. Let $v \in H^{-m}_{-\delta}[k](\R^n)$. Then
		\begin{eqnarray*}
			|\langle \varphi u, v\rangle_{L^2}| &= & |\langle u, \varphi v\rangle | \\
			&\leq & \|u\|_{H^{m}_{\eta}[k]}\|\varphi v\|_{H^{-m}_{-\eta}[k]} \\
			&\leq & \|u\|_{H^{m}_{\eta}[k]}\|v\|_{H^{-m}_{-\delta}[k]}.
		\end{eqnarray*}
		Taking the supremum of the left hand side over all $v$ with $\|v\|_{H^{-m}_{-\delta}[k]} \leq 1$ gives us the desired result.
	\end{proof}
	
	We now establish $H^s[k]$ bounds on the multiplication operator $m_q:f \mapsto qf$ when $q$ belongs to a negative order Bessel potential space. We closely follow the proof of  Proposition 3.2 in \cite{Caro}. 
	
	\begin{prop}\label{mV}
		Let $s >0$ and $p \in (2,\infty)$ be such that $p \geq n/s$.  Suppose $V \in H^{-s,p}(\R^n)$. Then for $k \geq1$,
		\begin{equation}\label{sbound}
			\|Vf\|_{H^{-s}[k]} \lsim \omega(k)\|f\|_{H^{s}} \lsim \omega(k)\|f\|_{H^s[k]}, \qquad \forall f \in H^{s}(\R^n),  
		\end{equation}
		where $\omega$ is a positive function on $[1,\infty)$ such that $\omega(k) \to 0$ as $k \to\infty$. If in addition we have $0<s\leq 1$, then
		\begin{eqnarray}
			\|Vf\|_{H^{-1}[k]} &\lsim & k^{-(1-s)}\omega(k)\|f\|_{H^1}, \quad \textrm{and} \label{1kbound}\\
			\|Vf\|_{H^{-1}[k]} &\lsim & k^{-2(1-s)}\omega(k)\|f\|_{H^1[k]}. \label{1bound}
		\end{eqnarray}
		Note that the implicit constants (indicated by $\lsim$) in the right hand sides of  \eqref{sbound},\eqref{1kbound}, and \eqref{1bound} are independent of $k$.
	\end{prop}
	\begin{proof}
		By duality, it suffices to prove that
		\[
		|\bra Vf, g\ket| = |\bra V, fg\ket| \lsim \omega(k)\|f\|_{H^{s}}\|g\|_{H^{s}[k]} \qquad \textrm{for all } f,g \in \mathcal{S}(\R^n),\]
		where $\langle \cdot, \cdot \rangle$ denotes the duality pairing in $\mathcal{S}'(\R^n)\oplus \mathcal{S}(\R^n)$. Let $W = (I-\Delta)^{-s/2}V \in L^p(\R^n)$. Then we have,
		\[
		\langle V, fg\rangle = \bra (I-\Delta)^{s/2}W, fg\ket = \bra W, (I-\Delta)^{s/2}(fg)\ket. \]
		Let $\varphi \in C_c^\infty(\R^n;[0,1])$ be such that $\int_{\R^n}\varphi(x)dx = 1$. We consider the sequence of mollifiers $\varphi_\eps(x) := \eps^{-n}\varphi(x/\eps)$ and define $W_\eps := \varphi_\eps * W$. Choosing $t \in (s-n/p,s)$, we may write
		\begin{eqnarray*}
			\bra V, fg \ket &=& \bra W_\eps, (I-\Delta)^{s/2}(fg) \ket + \bra W-W_\eps, (I-\Delta)^{s/2}(fg)\ket\\
			&=& \bra (I-\Delta)^{t/2}W_\eps, (I-\Delta)^{(s-t)/2}(fg)\ket +\bra W-W_\eps, (I-\Delta)^{s/2}(fg)\ket.
		\end{eqnarray*}
		Now, by the Riesz representation theorem,
		\begin{equation}\label{holder}
			|\bra V, fg\ket| \leq \|(I-\Delta)^{t/2}W_\eps\|_{L^q}\|(I-\Delta)^{(s-t)/2}(fg)\|_{L^{q'}}+\|W-W_\eps\|_{L^p}\|(I-\Delta)^{s/2}(fg)\|_{L^{p'}},
		\end{equation}
		where $q=n/(s-t)$ and $p',q'$ are conjugate exponents of $p,q$ respectively. Since $t > s-n/p$, we have $q >p$ and therefore by Young's convolution inequality,
		\begin{eqnarray*}
			\|(I-\Delta)^{t/2}W_\epsilon\|_{L^q} & = & \|((I-\Delta)^{t/2}\varphi_\eps)*W\|_{L^q} \\
			&\leq & \|(I-\Delta)^{t/2}\varphi_\eps\|_{L^r}\|W\|_{L^p}\\
			&=& \|\varphi_\eps\|_{H^{r,t}}\|W\|_{L^p},
		\end{eqnarray*}
		where $1/p+1/r = 1+1/q$. Now using the scaling property $\|\varphi_\eps\|_{H^{r,t}} \asymp \eps^{-t-n+n/r}$ (ref. \cite{Triebel}, Corollary 5.16), we get
		\begin{equation*}
			\|(I-\Delta)^{t/2}W_\eps\|_{L^q}  \lsim  \eps^{-t+n/q-n/p}\|W\|_{L^p}.
		\end{equation*}
		Also, by Lemma \ref{katoponce},
		\begin{eqnarray*}
			\|(I-\Delta)^{(s-t)/2}(fg)\|_{L^{q'}} & \lsim & \|f\|_{H^{s-t}}\|g\|_{H^{s-t}}, \quad \textrm{and} \\
			\|(I-\Delta)^{s/2}(fg)\|_{L^{p'}} &\lsim & \|f\|_{H^s}\|g\|_{H^s}.
		\end{eqnarray*}
		Therefore, from \eqref{holder}, we get
		\begin{eqnarray*}
			|\bra V, fg\ket | &\lsim & \eps^{-t+n/q-n/p}\|W\|_{L^p}\|f\|_{H^{s-t}}\|g\|_{H^{s-t}} + \|W-W_\eps\|_{L^p}\|f\|_{H^s}\|g\|_{H^s} \\
			&\lsim & \eps^{-t+n/q-n/p}\|W\|_{L^p}\|f\|_{H^{s-t}}\|g\|_{H^{s-t}[k]} + \|W-W_\eps\|_{L^p}\|f\|_{H^s}\|g\|_{H^s[k]}\\
			&\lsim & (\eps^{-t+n/q-n/p}k^{-t}\|W\|_{L^p}+\|W-W_\eps\|_{L^p})\|f\|_{H^s}\|g\|_{H^s[k]}.
		\end{eqnarray*}
		Here we have used the easy estimate $\|h\|_{H^{s-t}[k]} \lsim k^{-t}\|h\|_{H^s[k]}$ for any $h \in \mathcal{S}(\R^n)$. Note that $n/q-n/p \geq (s-t)-s  = -t$. Now choose 
		$$\eps = k^{-1/4}$$ 
		and set
		\begin{equation}\label{omega}
			\omega(k) = k^{-t/2}\|W\|_{L^p}+\|W-W_{k^{-1/4}}\|_{L^p}.
		\end{equation}
		Then we get
		\begin{equation}\label{bilinears}
			|\bra V, fg\ket| \lsim \omega(k)\|f\|_{H^s}\|g\|_{H^s[k]} \lsim \omega(k)\|f\|_{H^s[k]}\|g\|_{H^s[k]} 
		\end{equation}
		where $\omega(k) \to 0$ as $k \to \infty$. This proves \eqref{sbound}. Now \eqref{1kbound} and \eqref{1bound} follow from the fact that if $0<s\leq 1$,
		\begin{eqnarray}
			|\bra V, fg\ket| &\lsim & \omega(k)\|f\|_{H^s}\|g\|_{H^s[k]} \lsim \omega(k)k^{-(1-s)}\|f\|_{H^1}\|g\|_{H^1[k]}, \label{bilinear1k}\\
			|\bra V,fg\ket| &\lsim & \omega(k)\|f\|_{H^s[k]}\|g\|_{H^s[k]} \lsim \omega(k)k^{-2(1-s)}\|f\|_{H^1[k]}\|g\|_{H^1[k]}. \label{bilinear1} 
		\end{eqnarray}
	\end{proof}
	
	If in addition, $V$ is compactly supported, the multiplication operator $m_V$ can be extended to $H^s_{\delta}[k]$ spaces.
	\begin{corollary}\label{multiplierbound}
		Let $0<s<1$ and $q \in H^{-s,n/s}(\R^n)$ be such that $\supp(q)$ is compact. Suppose $\delta, \eta \in \R$. Then $m_q: f \mapsto qf$  satisfies the norm bounds
		\begin{eqnarray}
			\|m_q f\|_{H^{-1}_{\delta}[k]} &\lsim & k^{-(1-s)}\omega(k)\|f\|_{H^1_{\eta}}, \label{bilinear1kdelta}\\
			\|m_q f\|_{H^{-1}_{\delta}[k]} &\lsim & k^{-2(1-s)}\omega(k)\|f\|_{H^1_{\eta}[k]} \label{bilinear1delta}.
		\end{eqnarray}
		where $\omega$ is the positive function on $[1,\infty)$ defined by \eqref{omega}. In particular, it satisfies $\omega(k) \to 0$ as $k \to \infty$. 
	\end{corollary}
	\begin{proof}
		Let $\varphi \in C_c^{\infty}(\R^n)$ be such that $\varphi \equiv 1$ on $\supp(q)$. Then by \eqref{bilinear1k}, for all $f,g \in \mathcal{S}(\R^n)$,
		\begin{eqnarray*}
			|\langle qf,g\rangle_{L^2}| =|\langle q, fg \rangle| &=& |\langle q, (\varphi f)(\varphi g)\rangle | \\
			&\lsim & \omega(k)k^{-(1-s)}\|\varphi f\|_{H^1}\|\varphi g\|_{H^1[k]}\\
			&\lsim & \omega(k)k^{-(1-s)}\|f\|_{H^1_\eta}\|g\|_{H^1_{-\delta}[k]} \quad \textrm{by Lemma \ref{lcompact}}. 
		\end{eqnarray*}
		Now \eqref{bilinear1kdelta} follows by density and duality. \eqref{bilinear1delta} similarly follows from \eqref{bilinear1}.
	\end{proof}
	
	\section{Well-posedness and reduction to the Schr{\"o}dinger equation for rough coefficients}
	
	We begin this section by establishing the well-posedness of the Dirichlet boundary value problem \eqref{bvp-schrodinger} when $q \in H^{-1/2,2n}_D$. 
	
	\begin{prop}\label{fredholm}
		Let $\Omega \subset \R^n$ be a bounded Lipschitz domain, $D\subset \Omega$ be a compact subset, and $q \in H^{-1/2,2n}_D$.
		\begin{enumerate}[(a)] 
			
			\item The multiplication operator $m_q: C^{\infty}(\Omega) \to \mathcal{D}'(\Omega)$ defined by 
			$$\langle m_q(\varphi),\psi\rangle =\langle q, \varphi \psi \rangle, \qquad \varphi \in C^\infty(\Omega), \ \psi \in C_c^\infty(\Omega),$$
			extends to a continuous map $H^1(\Omega) \to H^{-1}(\Omega)$, and is compact.
			
			\item (The Fredholm Alternative) Exactly one of the following must be true:
			\begin{enumerate}[(i)]
				\item For any $f \in H^{1/2}(\pa \Omega)$ and $F \in H^{-1}(\Omega)$, there exists a unique $u \in H^1(\Omega)$ such that
				\begin{equation*}
					\left\{
					\begin{array}{rl}
						(-\Delta +m_q)u= & F   \textrm{ in } \Omega \\
						u = & f \textrm{ on } \partial \Omega.
					\end{array} \right.
				\end{equation*}
				Moreover, there exists $C=C(q,\Omega)>0$ such that
				\[
				\|u\|_{H^1(\Omega)} \leq C(\|f\|_{H^{1/2}(\pa \Omega)} + \|F\|_{H^{-1}(\Omega)}). \]
				
				\item There exists $u \in H^1(\Omega),  u \not\equiv 0$, such that
				\begin{equation*}
					\left\{
					\begin{array}{rl}
						(-\Delta +m_q)u= & 0   \textrm{ in } \Omega \\
						u = & 0\textrm{ on } \partial \Omega.
					\end{array} \right.
				\end{equation*}
				That is, $0$ is a Dirichlet eigenvalue of $(-\Delta+m_q)$ on $\Omega$.
			\end{enumerate}
		\end{enumerate}
	\end{prop}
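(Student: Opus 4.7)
For part (a), my plan is to establish a bilinear product estimate, combine it with Rellich--Kondrachov to obtain boundedness, and then upgrade to compactness via mollification of $q$. The key ingredient is the product estimate
\[
\|\varphi\psi\|_{W^{1, n/(n-1)}(\Omega)} \lesssim \|\varphi\|_{H^1(\Omega)} \|\psi\|_{H^1(\Omega)},
\]
which I would derive from the Leibniz rule $\nabla(\varphi\psi) = \psi\nabla\varphi + \varphi\nabla\psi$, the Sobolev embedding $H^1(\Omega) \hookrightarrow L^{2n/(n-2)}(\Omega)$ (valid since $n \geq 3$), and H\"older's inequality. Since $W^{-1/2, 2n}$ is dual to $W^{1/2, 2n/(2n-1)}$ and Rellich--Kondrachov supplies a compact embedding $W^{1, n/(n-1)}(\Omega) \hookrightarrow W^{1/2, 2n/(2n-1)}(\Omega)$ (both Sobolev indices improve strictly, as $s_1 - n/p_1 = 2-n > 1-n = s_2 - n/p_2$), cutting off by $\chi \in C^\infty_c(\Omega)$ with $\chi \equiv 1$ on $\supp q$ and pairing $q$ against $(\chi\varphi)(\chi\psi) \in W^{1, n/(n-1)}(\R^n)$ yields
\[
|\langle m_q(\varphi), \psi\rangle| = |\langle q, \varphi\psi\rangle| \lesssim \|q\|_{W^{-1/2, 2n}} \|\varphi\|_{H^1(\Omega)} \|\psi\|_{H^1(\Omega)}.
\]

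To promote boundedness to compactness, I would approximate $q$ in $W^{-1/2, 2n}$ by a sequence $q_k \in C^\infty_c(\Omega)$ obtained via standard mollification (which preserves compact support inside $\Omega$ for $k$ large, since $\supp q \Subset \Omega$). For each $q_k$, the operator $m_{q_k}$ factors as $H^1(\Omega) \hookrightarrow L^2(\Omega) \xrightarrow{m_{q_k}} L^2(\Omega) \hookrightarrow H^{-1}(\Omega)$, with the first embedding compact by Rellich--Kondrachov, so each $m_{q_k}$ is compact. The same product estimate gives
\[
\|m_q - m_{q_k}\|_{H^1 \to H^{-1}} \lesssim \|q - q_k\|_{W^{-1/2, 2n}} \to 0,
\]
so $m_q$ is a norm-limit of compact operators and is therefore itself compact.

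For part (b), the plan is the standard reduction to a compact perturbation of the identity. Given $f \in H^{1/2}(\pa \Omega)$, I would pick a bounded right inverse of the trace to produce $w \in H^1(\Omega)$ with $w|_{\pa \Omega} = f$, and write $u = w + v$ with $v \in H^1_0(\Omega)$, so that $v$ solves $(-\Delta + m_q) v = \tilde F$ with $\tilde F := F + \Delta w - m_q w \in H^{-1}(\Omega)$. Since $-\Delta : H^1_0(\Omega) \to H^{-1}(\Omega)$ is an isomorphism by Poincar\'e, the problem becomes
\[
(I + K) v = (-\Delta)^{-1}\tilde F, \qquad K := (-\Delta)^{-1} \circ m_q,
\]
with $K$ compact on $H^1_0(\Omega)$ by part (a). The classical Fredholm alternative for compact perturbations of the identity yields the stated dichotomy, and the a priori estimate in case (i) follows from the bounded inverse theorem applied to $I + K$. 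The main technical obstacle is confined to part (a): one must identify the correct intermediate Sobolev space $W^{1, n/(n-1)}$ so that the compact Rellich--Kondrachov embedding into the predual $W^{1/2, 2n/(2n-1)}$ of $W^{-1/2, 2n}$ becomes available; the remainder is routine elliptic theory.
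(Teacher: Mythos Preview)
Your argument is correct, and part (b) matches the paper's reasoning essentially verbatim. For part (a), however, you take a genuinely different route from the paper.

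The paper does not argue via a product estimate in $W^{1,n/(n-1)}$ and mollification. Instead, it invokes its later multiplication bound (Theorem~\ref{mV}, proved via Kato--Ponce) with $s=1/2$, $p=2n$ to conclude directly that $m_q$ maps $H^1(\Omega)$ into $H^{-1/2}_{\mathrm{comp}}(\Omega)$; compactness then drops out in one line from the compact inclusion $H^{-1/2}_{\mathrm{comp}}(\Omega)\hookrightarrow H^{-1}(\Omega)$. In other words, the paper obtains compactness by showing the range of $m_q$ lies in a strictly smaller Sobolev space, rather than by approximating $q$.

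Each approach has its advantages. The paper's proof is shorter once Theorem~\ref{mV} is in hand and yields the extra information that $m_q u$ actually sits in $H^{-1/2}$, not merely $H^{-1}$; this is consistent with how the paper later exploits the precise mapping properties of $m_q$ in weighted spaces. Your approach, on the other hand, is more elementary and fully self-contained: it avoids the forward reference to the Kato--Ponce machinery, needing only the Leibniz rule, the endpoint Sobolev embedding $H^1\hookrightarrow L^{2n/(n-2)}$, and H\"older. Your mollification step is also a clean way to get compactness without ever identifying a smaller target space. Both are valid; the paper's is more in keeping with the analytic framework developed later, while yours would be preferable in a context where Theorem~\ref{mV} is not already available.
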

	\begin{proof}
		It follows from Proposition \ref{mV} that $m_q$ maps $H^1(\Omega) \to H^{-1/2,2}_D$. The compactness of $m_q: H^1(\Omega) \to H^{-1}(\Omega)$ follows from the compactness of the inclusion $H^{-1/2,2}_D \hookrightarrow H^{-1,2}_D  \hookrightarrow H^{-1}(\Omega)$ (ref. \cite{mclean}, Theorem 3.27).\\
		
		Next, we note that $(-\Delta + m_q): H^1_0(\Omega) \to H^{-1}(\Omega)$ is Fredholm, since $-\Delta: H^1_0(\Omega) \to H^{-1}(\Omega)$ is invertible and $m_q$ is compact. Therefore, (b) follows from standard Fredholm theory.
	\end{proof}
	
	As usual, if $0$ is not a Dirichlet eigenvalue of $(-\Delta+q)$, we define the Dirichlet-to-Neumann map $\Lambda_q: H^{1/2}(\pa \Omega) \to H^{-1/2}(\pa \Omega)$ by duality: Given $f \in H^{1/2}(\pa \Omega)$, let $u \in H^1(\Omega)$ be the unique solution of \eqref{bvp-schrodinger}. Then
	\[
	\langle \Lambda_q f, g\rangle = \int_{\pa \Omega} \Lambda_q (f)g \, d\sigma = \int_\Omega \nabla u \cdot \nabla v\, dx +\langle m_q u, v\rangle_{L^2(\Omega)}, \qquad g \in H^{1/2}(\pa \Omega) \]
	where $d\sigma$ is the surface measure on $\pa \Omega$ and $v \in H^1(\Omega)$ is any function such that $v|_{\pa \Omega} =g$. We also get the following integral identity as a consequence of the symmetry of the multiplication operator $m_q$:
	\begin{prop}\label{integral}
		Let $q_1,q_2 \in H^{-1/2,2n}_{D}(\Omega)$ be such that $0$ is not a Dirichlet eigenvalue of $(-\Delta +m_{q_j})$ on $\Omega$, $j=1,2$. Let $u_1,u_2 \in H^1(\Omega)$ be solutions of $(-\Delta+m_{q_j})u_j = 0$, $j=1,2$. Then
		\begin{equation}\label{int-identity}
			\int_{\pa \Omega}(\Lambda_{q_1}-\Lambda_{q_2})u_1\cdot u_2\, d\sigma = \int_{\Omega} (m_{q_1}-m_{q_2})u_1 \cdot u_2 \, dx 
		\end{equation}
		where $d\sigma$ is the surface measure on $\pa \Omega$.
	\end{prop}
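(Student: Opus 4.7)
The plan is to use the definition of the Dirichlet-to-Neumann map via the weak formulation, together with the symmetry of the multiplication operator $m_q$ (which satisfies $\langle m_q \varphi, \psi\rangle = \langle q, \varphi\psi\rangle = \langle m_q \psi, \varphi\rangle$ for $\varphi, \psi \in H^1(\Omega)$). Let $f_j = u_j|_{\pa\Omega}$ for $j=1,2$. By the definition of $\Lambda_{q_1}$, applied with $v = u_2 \in H^1(\Omega)$ as the extension of $f_2$, we obtain
\[
\langle \Lambda_{q_1} f_1, f_2\rangle = \int_\Omega \nabla u_1 \cdot \nabla u_2\, dx + \langle m_{q_1} u_1, u_2\rangle.
\]

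The subtler step is computing $\langle \Lambda_{q_2} f_1, f_2\rangle$, since $u_1$ is \emph{not} a solution of $(-\Delta + m_{q_2})w=0$. Let $w_1 \in H^1(\Omega)$ be the unique solution with boundary value $f_1$ guaranteed by Proposition \ref{fredholm}; then $w_1 - u_1 \in H^1_0(\Omega)$ and, using $w_1$ in the defining identity,
\[
\langle \Lambda_{q_2} f_1, f_2\rangle = \int_\Omega \nabla w_1 \cdot \nabla u_2\, dx + \langle m_{q_2} w_1, u_2\rangle.
\]
Since $u_2$ weakly satisfies $(-\Delta + m_{q_2})u_2 = 0$, testing against $w_1 - u_1 \in H^1_0(\Omega)$ and invoking the symmetry of $m_{q_2}$ gives
\[
\int_\Omega \nabla (w_1-u_1) \cdot \nabla u_2\, dx + \langle m_{q_2}(w_1-u_1), u_2\rangle = 0,
\]
so one may replace $w_1$ by $u_1$ in the previous expression, obtaining
\[
\langle \Lambda_{q_2} f_1, f_2\rangle = \int_\Omega \nabla u_1 \cdot \nabla u_2\, dx + \langle m_{q_2} u_1, u_2\rangle.
\]

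Subtracting the two identities makes the Dirichlet integrals cancel and yields the claim. The only genuine content beyond bookkeeping is the symmetry of $m_{q_j}$, which is immediate from the distributional definition once one knows, from Proposition \ref{fredholm}(a), that $m_{q_j}$ maps $H^1(\Omega)$ into $H^{-1}(\Omega)$ with the pairing $\langle m_{q_j}\varphi, \psi\rangle = \langle q_j, \varphi\psi\rangle$ being well defined on $H^1 \times H^1$; this requires $\varphi\psi \in W^{1/2,2n/(2n-1)}_{\mathrm{loc}}$ or an analogous statement dual to $W^{-1/2,2n}$, which is supplied by the multiplier bounds (the forthcoming Theorem \ref{mV}) referenced in the proof of Proposition \ref{fredholm}. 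No further obstacle is anticipated.
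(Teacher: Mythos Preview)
Your argument is correct and is precisely the standard derivation the paper has in mind: the proposition is stated in the paper as an immediate consequence of the symmetry of $m_q$ and the weak definition of $\Lambda_q$, with no further proof given. Your write-up simply fills in the routine details (introducing the auxiliary solution $w_1$ and testing the equation for $u_2$ against $w_1-u_1\in H^1_0$), which is exactly what the one-line justification in the paper is abbreviating.
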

	
	We will now show that when $\gamma \in H^{3/2,2n}(\Omega)$ and is such that $\gamma \equiv 1$ in $\Omega \setminus D$, the boundary value problem \eqref{bvp} reduces to the corresponding problem \eqref{bvp-schrodinger} for the Schr{\"o}dinger equation.
	
	\begin{prop}\label{reduction}
		Let $\gamma \in H^{3/2,2n}( \Omega)$ be such that
		\[
		0 < c < \gamma(x) < c^{-1} \qquad \textrm{a.e. on }\Omega \]
		and $\gamma \equiv 1$ in $\Omega \setminus D$. Extend $\gamma$ to all of $\R^n$ by defining $\gamma \equiv 1$ on $\R^n \setminus \Omega$ and define $q = \Delta \sqrt{\gamma}/\sqrt{\gamma}$. 
		\begin{enumerate}[(a)]
			
			\item $q \in H^{-1/2,2n}_{D}( \Omega)$.
			
			\item $u \in H^1(\Omega)$ solves
			\begin{equation}\label{bvp1}
				\left\{
				\begin{array}{rl}
					-\nabla \cdot (\gamma\nabla u)=  & 0  \textrm{ in } \Omega \\
					u = & f \in H^{1/2}(\pa \Omega)
				\end{array} \right.
			\end{equation}
			if and only if $w = \gamma^{1/2}u \in H^1(\Omega)$ solves
			\begin{equation}\label{bvp2}
				\left\{
				\begin{array}{rl}
					(-\Delta +q)w= & 0   \textrm{ in } \Omega \\
					w = & f \textrm{ on } \partial \Omega.
				\end{array} \right.
			\end{equation}
			
			\item[(c)] $0$ is not a Dirichlet eigenvalue of $(-\Delta+m_q)$ on $\Omega$ and $\Lambda_q = \Lambda_\gamma$.
		\end{enumerate}
	\end{prop}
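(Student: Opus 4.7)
My plan is to treat the three parts in turn.

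For part (a), I would first show that $\sqrt{\gamma}-1 \in W^{3/2,2n}(\R^n)$ has compact support in $\Omega$. Compact support is immediate from $\gamma \equiv 1$ outside a compact subset of $\Omega$, and the Bessel-potential regularity follows from a composition lemma: $t \mapsto \sqrt{1+t}-1$ is smooth on a neighborhood of the (bounded) range of $\gamma - 1$, and such smooth superposition operators are bounded on $W^{s,p}$ for $s = 3/2$, $p = 2n$ (see \cite{Triebel}). Applying $\Delta$ then gives $\Delta \sqrt{\gamma} \in W^{-1/2,2n}_{\textrm{comp}}(\Omega)$. Finally, $q$ is the product of this distribution with $1/\sqrt{\gamma}$, which lies in $W^{3/2,2n} \cap L^\infty \subset C^1_*$; the fact that Zygmund functions act as bounded multipliers on $W^{-1/2,2n}$ (dually on $W^{1/2,(2n)'}$) is precisely the content of the multiplier theorem (Theorem \ref{mV}) developed later in the paper. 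Invoking it concludes $q \in W^{-1/2,2n}_{\textrm{comp}}(\Omega)$.

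For part (b), the reduction rests on the classical distributional identity
\[
-\nabla \cdot (\gamma \nabla u) \;=\; \sqrt{\gamma}\,[-\Delta w + q w], \qquad w = \sqrt{\gamma}\,u,
\]
whose formal justification uses $\Delta(\sqrt{\gamma}\,u) = u\,\Delta\sqrt{\gamma} + 2\nabla\sqrt{\gamma} \cdot \nabla u + \sqrt{\gamma}\,\Delta u$ together with $\nabla\cdot(\gamma \nabla u) = \sqrt{\gamma}(\sqrt{\gamma}\,\Delta u + 2\nabla\sqrt{\gamma}\cdot\nabla u)$. The $H^1$-equivalence $u \in H^1(\Omega) \iff w \in H^1(\Omega)$ is immediate since $\sqrt{\gamma},\,1/\sqrt{\gamma} \in L^\infty \cap W^{3/2,2n}$ are bounded $H^1$ multipliers, and the boundary condition $w|_{\pa\Omega} = u|_{\pa\Omega} = f$ is preserved because $\sqrt{\gamma} \equiv 1$ near $\pa\Omega$. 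The rigorous form of the identity I would check by pairing with $\varphi \in C^\infty_c(\Omega)$ and interpreting the $q$-term via the $H^1 \to H^{-1}$ extension of $m_q$ from Proposition \ref{fredholm}(a); alternatively, one can approximate $\gamma$ by smooth functions in $W^{3/2,2n}$ and pass to the limit using continuity of all the bilinear operations involved.

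For part (c), if $0$ were a Dirichlet eigenvalue of $(-\Delta + m_q)$ there would exist $0 \not\equiv w \in H^1_0(\Omega)$ with $(-\Delta + m_q)w = 0$; then $u := w/\sqrt{\gamma} \in H^1_0(\Omega)$ solves $\nabla \cdot (\gamma \nabla u) = 0$ with zero trace by part (b), and testing against $u$ yields $\int_\Omega \gamma|\nabla u|^2 = 0$. Uniform ellipticity and Poincar\'e then force $u \equiv 0$, hence $w \equiv 0$, a contradiction. The equality $\Lambda_q = \Lambda_\gamma$ is then read off from the bilinear definition: given $f,g \in H^{1/2}(\pa\Omega)$, the extension $v$ of $g$ may be chosen with $v \equiv \tilde v/\sqrt{\gamma}$ where $\tilde v|_{\pa\Omega} = g$ (allowed since $\sqrt{\gamma} \equiv 1$ near $\pa\Omega$), and a direct substitution combined with part (b) identifies $\langle \Lambda_\gamma f, g\rangle$ with $\langle \Lambda_q f, g\rangle$.

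The main obstacle is part (a), which hinges on the multiplier theorem asserting that $C^1_*$ functions act boundedly on $W^{-1/2,2n}$; this is the central analytic input that the paper must (and does) supply. Once that estimate is available, the manipulations in parts (b) and (c) are essentially routine extensions of the classical smooth-coefficient reduction.
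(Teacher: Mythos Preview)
Your treatment of (b) and (c) is essentially the paper's. Part (a), however, follows a genuinely different route and contains a misattribution that you should fix.

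The paper does \emph{not} argue via $\Delta\sqrt{\gamma}\in W^{-1/2,2n}$ followed by multiplication by $\gamma^{-1/2}$. Instead it invokes the logarithmic identity
\[
q=\frac{\Delta\sqrt{\gamma}}{\sqrt{\gamma}}=\tfrac{1}{2}\Delta\log\gamma+\tfrac{1}{4}|\nabla\log\gamma|^{2},
\]
and bounds the two pieces separately: $\|\Delta\log\gamma\|_{W^{-1/2,2n}}\lsim\|\log\gamma\|_{W^{3/2,2n}}$ trivially, and $\||\nabla\log\gamma|^{2}\|_{W^{-1/2,2n}}\lsim\||\nabla\log\gamma|^{2}\|_{L^{n}}=\|\nabla\log\gamma\|_{L^{2n}}^{2}$ via the embedding $L^{n}\hookrightarrow W^{-1/2,2n}$. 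The composition argument is then applied to $\log\gamma$ rather than $\sqrt{\gamma}$ (interpolating $W^{1,2n}\to W^{1,2n}$ and $W^{2,2n}\to W^{2,2n}$). The whole point of passing to $\log\gamma$ is to replace the division by $\sqrt{\gamma}$ with an addition, so that no pointwise-multiplier theorem on $W^{-1/2,2n}$ is ever needed.

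Your route is viable, but the step ``$C^{1}_{*}$ functions are bounded multipliers on $W^{-1/2,2n}$'' is \emph{not} what Theorem~\ref{mV} says. That theorem bounds the bilinear map $W^{-s,p}\times H^{s}\to H^{-s}$, i.e.\ $m_{q}:H^{s}\to H^{-s}$ for $q\in W^{-s,p}$; both the function spaces are $L^{2}$-based and the rough factor sits in $W^{-s,p}$, not in $C^{1}_{*}$. Dualizing does not turn this into a statement about multiplication on $W^{1/2,(2n)'}$. The fact you actually need---that $C^{\alpha}_{*}$ acts boundedly on $W^{s,p}$ for $|s|<\alpha$---is a standard pointwise-multiplier result (see e.g.\ \cite{Triebel}), so your argument can be salvaged by citing that instead. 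Once corrected, your approach is arguably more direct; the paper's decomposition trades that directness for relying only on derivatives and Sobolev embeddings, avoiding any external multiplier theorem.
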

	\begin{proof}
		\begin{enumerate}[(a)]
			
			\item That $\supp(q) \subset D$ follows from the fact that $\gamma \equiv 1$ outside $D$. Next consider the identity
			\begin{eqnarray*}
				\frac{\Delta \sqrt{\gamma}}{\sqrt{\gamma}} &=& \frac{1}{2}\Delta \log \gamma + \frac{1}{4}|\nabla \log \gamma|^2 \\
				\Rightarrow \|q\|_{H^{-1/2,2n}} &\lsim & \|\Delta \log \gamma\|_{H^{-1/2,2n}} + \||\nabla \log \gamma|^2\|_{H^{-1/2,2n}}\\
				&\lsim & \|\log \gamma\|_{H^{3/2,2n}} + \||\nabla \log \gamma|^2\|_{L^{n}}  \quad (\textrm{as }L^n(\R^n) \hookrightarrow H^{-1/2,2n}(\R^n) ) \\
				&= &  \|\log \gamma\|_{H^{3/2,2n}}+\|\nabla \log \gamma\|_{L^{2n}}^2 \\
				&\lsim &  \|\log \gamma\|_{H^{3/2,2n}} + \|\log \gamma\|_{H^{1,2n}}^2 \\
				&\lsim & \|\log \gamma \|_{H^{3/2,2n}}+\|\log \gamma \|_{H^{3/2,2n}}^2 \quad (\textrm{as } H^{3/2,2n}(\R^n) \hookrightarrow H^{1,2n}(\R^n))
			\end{eqnarray*}
			by the monotonicity and Sobolev embedding properties of $H^{s,p}$ spaces (ref. \cite{Triebel}). Next, choose a bounded function $F: \R \to \R$ that satisfies $F(x) = \log x$ on $[c,c^{-1}]$ and has bounded continuous derivatives up to order $2$. We will use the fact that for any $s \geq 1$, $1 <p <\infty$ and $f \in C^{[s]+1}(\R)$ that has bounded derivatives up to order $[s]+1$, the composition map $u \mapsto f\circ u$ maps $H^{s,p}(\Omega)\cap H^{1,sp}(\Omega)$ continuously into $H^{s,p}(\Omega)$ \cite{Brezis-comp}. Notice that $H^{3/2,2n}(\Omega) \hookrightarrow H^{1,q}(\Omega)$ for all $q < \infty$ by Sobolev embedding (ref. \cite{Triebel}, Theorem 3.3.1(ii)). Therefore,
			\[
			\|\log \gamma\|_{H^{3/2,2n}(\Omega)} = \|F\circ \gamma\|_{H^{3/2,2n}(\Omega)} < \infty. \]
			Finally, observe that $\log \gamma \in H_0^{3/2,2n}(\Omega)$ (i.e., the closure of $C_c^\infty(\Omega)$ in $H^{3/2,2n}(\Omega)$) and  extension by $0$ is a continuous map from $H_0^{3/2,2n}(\Omega) \to H^{3/2,2n}(\R^n)$ (ref. \cite{Triebel}, Section 3.4.3, Corollary and  Remark 2). Therefore, we get 
			\[
			\|q\|_{H^{-1/2,2n}} \lsim \|\log\gamma\|_{H^{3/2,2n}(\Omega)} +\|\log\gamma\|_{H^{3/2,2n}(\Omega)}^2 < \infty.  \]
			
			\item  Let us first show that $u \in H^1(\Omega)$ if and only if $w = \gamma^{1/2}u \in H^1(\Omega)$. Recall from part (a) that for any bounded smooth function $F:\R \to \R$ with bounded derivatives of all orders, the composition map $u \mapsto F\circ u$ maps $H^{3/2,2n}(\Omega)$ continuously into $H^{3/2,2n}(\Omega)$. Choosing $F$ such that it coincides with $x \mapsto x^{\pm 1/2}$ for $x \in [c,c^{-1}]$, we can infer that $\gamma^{1/2}, \gamma^{-1/2} \in H^{3/2,2n}(\Omega)$ as well. Next, we observe that if $f\in H^{3/2,2n}(\Omega)$ and $g \in H^1(\Omega)$, then $fg \in H^1(\Omega)$ as well. Indeed, since $H^{3/2,2n}(\Omega) \hookrightarrow L^\infty (\Omega)$, it is clear that $fg \in L^2(\Omega)$. Moreover, we have the embeddings $H^{3/2,2n}(\Omega) \hookrightarrow H^{1,n}(\Omega)$ and $H^1(\Omega) \hookrightarrow L^{2n/(n-2)}(\Omega)$ (ref. \cite{Triebel}, Theorem 3.3.1(ii)), and therefore,
			\begin{eqnarray*}
				\|\nabla (fg)\|_{L^2(\Omega)} & \leq & \|f\nabla g\|_{L^2(\Omega)} + \|g\nabla f\|_{L^2(\Omega)} \\
				&\leq & \|f\|_{L^\infty(\Omega)}\|\nabla g\|_{L^2(\Omega)} + \|\nabla f\|_{L^n(\Omega)}\|g\|_{{L^{2n/(n-2)}(\Omega)}} \\
				&\lsim & \|f\|_{H^{3/2,2n}(\Omega)}\|g\|_{H^1(\Omega)} < \infty.
			\end{eqnarray*}
			Consequently, we have the estimate 
			\begin{equation}\label{multh1}
				\|fg\|_{H^1(\Omega)} \lsim \|f\|_{H^{3/2,2n}(\Omega)}\|g\|_{H^1(\Omega)}.
			\end{equation}
			This shows that multiplication by $f\in H^{3/2,2n}(\Omega)$ is a continuous operator on $H^1(\Omega)$. In particular, $w = \gamma^{1/2}u \in H^1(\Omega)$ if and only if $u = \gamma^{-1/2}w \in H^1(\Omega)$. We note here that we can extend the operation of multiplication by $f$ to $H^{-1}(\Omega)$ by duality, i.e., for any $h\in H^{-1}(\Omega)$, define
			$$\langle m_f h, g\rangle := \langle h, fg\rangle, \qquad \textrm{for all } g \in H^1_0(\Omega).$$
			Then \eqref{multh1} immediately implies the estimate
			\begin{equation}\label{multhminus1}
				\|fh\|_{H^{-1}(\Omega)} \lsim \|f\|_{H^{3/2,2n}(\Omega)}\|h\|_{H^{-1}(\Omega)}, \qquad \textrm{for all } f \in H^{3/2,2n}(\Omega), \, h \in H^{-1}(\Omega).
			\end{equation}
			
			Next, we claim that for all $w \in H^1(\Omega)$,
			\begin{equation}\label{transid}	
				\nabla \cdot (\gamma \nabla (\gamma^{-1/2}w)) = \gamma^{1/2}\left(\Delta w -qw\right). 
			\end{equation}
			Indeed, let $\gamma_n$ be a sequence of smooth functions that converge to $\gamma$ in $H^{3/2,2n}(\Omega)$. Then we have
			\begin{eqnarray*}
				\gamma_n \nabla(\gamma_n^{-1/2}w) &=& \gamma_n^{1/2}\nabla w - (\nabla \gamma_n^{1/2})w \quad \textrm{in } L^2(\Omega)\\
				\Rightarrow \nabla \cdot \left(\gamma_n \nabla(\gamma_n^{-1/2}w)\right) &=& \gamma_n^{1/2}\Delta w + \nabla \gamma_n^{1/2}\cdot \nabla w -(\Delta \gamma_n^{1/2})w - \nabla \gamma_n^{1/2}\cdot \nabla w \\
				&=& \gamma_n^{1/2}\Delta w-(\Delta \gamma_n^{1/2})w \quad \textrm{in } H^{-1}(\Omega).
			\end{eqnarray*}
			Observe that as $n\to \infty$, $\gamma_n^{-1/2}w \to \gamma^{-1/2}w$ in $H^1(\Omega)$ by \eqref{multh1}. Consequently, $\gamma_n\nabla(\gamma_n^{-1/2}w) \to \gamma\nabla(\gamma^{-1/2}w)$ in $L^2(\Omega)$ and $\nabla\cdot \left( \gamma_n\nabla(\gamma_n^{-1/2}w)\right) \to \nabla \cdot\left(\gamma\nabla(\gamma^{-1/2}w)\right)$ in $H^{-1}(\Omega)$.
			
			On the right hand side, we have $\gamma_n^{1/2}\Delta w \to \gamma^{1/2}\Delta w$ in $H^{-1}(\Omega)$ by \eqref{multhminus1}. Moreover, $\Delta \gamma_n^{1/2} \to \Delta \gamma^{1/2}$ in $H^{-1/2,2n}(\Omega)$. It follows from Corollary \ref{multiplierbound} that the map $(f,g)\in H^{-1/2,2n}(\Omega)\times H^{1}(\Omega) \mapsto fg \in H^{-1}(\Omega)$ is continuous in both $f$ and $g$ (see also \cite[Corollary 2]{bro03}). Therefore, $(\Delta\gamma_n^{1/2})w \to (\Delta \gamma^{1/2})w$ in $H^{-1}(\Omega)$, and the right hand side
			$$\gamma_n^{1/2}\Delta w - (\Delta \gamma_n^{1/2})w \quad \to \quad \gamma^{1/2}\Delta w - (\Delta \gamma^{1/2})w \quad \textrm{in }H^{-1}(\Omega)$$
			as $n \to \infty$. This proves the identity \eqref{transid}. This along with the fact that $\gamma \equiv 1$ on $\pa \Omega$ implies that $w$ solves \eqref{bvp2} iff $u = \gamma^{-1/2}w$ solves \eqref{bvp1}.
			
			\item $0$ is not a Dirichlet eigenvalue as \eqref{bvp1} and hence \eqref{bvp2} have unique solutions. Now suppose $f \in H^{1/2}(\pa \Omega)$ and $u$ and $w$ are as in \eqref{bvp1} and \eqref{bvp2}. Let $\pa/\pa \nu$ be the outward pointing unit normal vector field on $\pa \Omega$. Since $\gamma \equiv 1$ near $\pa \Omega$,
			\[
			\Lambda_q(f) =\frac{\pa w}{\pa \nu}\Big|_{\pa \Omega} = \gamma \frac{\pa u}{\pa \nu}\Big|_{\pa \Omega} = \Lambda_\gamma(f). \]
		\end{enumerate}
	\end{proof}
	
	Therefore, if we can reconstruct $q = \Delta \sqrt{\gamma}/\sqrt{\gamma}$ from $\Lambda_q=\Lambda_\gamma$, we can reconstruct $\sqrt{\gamma}$ from $q$ as the unique solution of the following boundary value problem:
	
	\begin{prop}
		Let $\gamma$ be as in Theorem \ref{mainth} and $q = \Delta \sqrt{\gamma}/\sqrt{\gamma}$. Then $\sqrt{\gamma}$ is the unique solution in $H^1(\Omega)$ of
		\begin{equation*}
			\left\{
			\begin{array}{rl}
				(-\Delta +q)u=  & 0  \textrm{ in } \Omega \\
				u \equiv & 1 \textrm{ on } \pa \Omega.
			\end{array} \right.
		\end{equation*}
	\end{prop}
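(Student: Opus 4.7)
The plan is to derive the statement directly from Proposition \ref{reduction}, using the trivial solution $u \equiv 1$ of the conductivity equation on the reduction side.

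First I would argue that $\sqrt{\gamma}$ lies in $H^1(\Omega)$. For this I would invoke the composition argument already used in Proposition \ref{reduction}(a): pick a bounded function $G \in C^2(\R)$ with $G(x) = \sqrt{x}$ on $[c, c^{-1}]$ and bounded continuous derivatives up to order $2$. Then $u \mapsto G \circ u$ maps $W^{1,2n}(\Omega) \to W^{1,2n}(\Omega)$ and $W^{2,2n}(\Omega) \to W^{2,2n}(\Omega)$ continuously, and by complex interpolation also $W^{3/2,2n}(\Omega) \to W^{3/2,2n}(\Omega)$. Hence $\sqrt{\gamma} = G \circ \gamma \in W^{3/2,2n}(\Omega) \hookrightarrow H^1(\Omega)$.

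For existence, I would observe that $u \equiv 1 \in H^1(\Omega)$ is a (indeed \emph{the}) solution of the conductivity equation \eqref{bvp1} with boundary data $f \equiv 1$, simply because $\nabla u = 0$. Applying Proposition \ref{reduction}(b), the function $w = \gamma^{1/2} \cdot 1 = \sqrt{\gamma}$ then solves $(-\Delta + q)w = 0$ in $\Omega$; its boundary trace equals $1$ because $\gamma \equiv 1$ near $\partial \Omega$ implies $\sqrt{\gamma}|_{\partial \Omega} = 1$. Alternatively, the same conclusion can be read off directly from the identity
\[
\nabla \cdot (\gamma \nabla(\gamma^{-1/2} w)) = \gamma^{1/2}(\Delta w - qw), \qquad w \in H^1(\Omega),
\]
proved in Proposition \ref{reduction}(b): specializing to $w = \sqrt{\gamma}$ makes the left-hand side vanish, and dividing by $\gamma^{1/2} \geq c^{1/2} > 0$ gives $(-\Delta + q)\sqrt{\gamma} = 0$ in $H^{-1}(\Omega)$.

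Uniqueness is the easier half: by Proposition \ref{reduction}(c), $0$ is not a Dirichlet eigenvalue of $(-\Delta + m_q)$ on $\Omega$, so the Fredholm alternative (Proposition \ref{fredholm}(b)) gives a unique $H^1(\Omega)$ solution for the boundary data $f \equiv 1$. There is no genuine obstacle here; the only point requiring mild care is that the multiplication $q \cdot \sqrt{\gamma}$ has to be interpreted via $m_q$ acting on $H^1(\Omega)$, as established in Proposition \ref{fredholm}(a), so that the equation $(-\Delta + q)\sqrt{\gamma} = 0$ makes unambiguous sense in $H^{-1}(\Omega)$.
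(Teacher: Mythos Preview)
Your proof is correct and follows the same approach as the paper. The paper's proof is extremely terse (``$u = \sqrt{\gamma}$ is clearly a solution. Moreover, the solution is unique by Proposition \ref{reduction}(c) and Proposition \ref{fredholm}(b).''), and you have simply filled in the details that the paper leaves implicit---in particular the $H^1$ regularity of $\sqrt{\gamma}$ and the existence argument via Proposition \ref{reduction}(b).
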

	\begin{proof}
		$u = \sqrt{\gamma}$ is clearly a solution. Moreover, the solution is unique by Proposition \ref{reduction}(c) and Proposition \ref{fredholm}(b).
	\end{proof}
	
	In the next two sections, we show how to reconstruct $q$ from $\Lambda_q$. 
	
\section{Complex Geometrical Optics Solutions}
	
	In this section, we will construct CGO solutions to the Schr{\"o}dinger equation $(-\Delta+q)u = 0$ in $\R^n$. The technique used here is standard. See, for example, \cite[Section 4.5]{FSU} for a proof assuming higher regularity on $q$ and $\gamma$.\\
	
	 Observe that if $\zeta \in \C^n$ is such that $\zeta \cdot \zeta = \sum_{j=1}^n \zeta_j^2 = 0$, we have $\Delta e^{x\cdot \zeta} =0$. Viewing $(-\Delta+q)$ as a perturbation of the Laplacian, we look for solutions to $(-\Delta+q)u = 0$ of the form
	\[
	u(x) = e^{x\cdot \zeta}(1+r_\zeta(x)). \]
	Such solutions are called Complex Geometrical Optics (CGO) solutions. We will show the existence of CGO solutions for $|\zeta|$ large enough and establish certain asymptotic bounds on $r_\zeta$ as $|\zeta| \to \infty$. First of all, note that $ u = e^{x\cdot \zeta}(1+r_\zeta)$ solves $(-\Delta+q)u =0$ iff
	\begin{eqnarray}
		-\Delta (e^{x\cdot \zeta}r_\zeta)+e^{x\cdot \zeta}qr_{\zeta} &=& -q \\
		\Leftrightarrow (-\Delta_\zeta +m_q)r_\zeta &=& -q \label{rzeta}
	\end{eqnarray}
	where $\Delta_\zeta v :=e^{-x\cdot \zeta}\Delta(e^{x\cdot \zeta}v) = (\Delta + 2\zeta\cdot \nabla)v$. There exists a right inverse $G_\zeta$ of $\Delta_\zeta$ given by
	\[
	G_\zeta f = \left(\frac{\widehat{f}(\xi)}{-|\xi|^2+2i\zeta\cdot \xi}\right)^{\vee}. \]
	Since the denominator $-|\xi|^2+2i\zeta\cdot \xi$ vanishes only  on a co-dimension 2 sphere in $\R^n$, the right hand side of the above equation is well defined as a tempered distribution whenever $f$ is a Schwartz function (see, for example, \cite[Proposition 4.5.4]{FSU} for details). Looking for solutions of the form $r_\zeta = G_\zeta s_\zeta$ to \eqref{rzeta}, we see that such an $s_\zeta$ should satisfy
	\[
	(I -m_qG_\zeta)s_\zeta = q \]
	where $I$ denotes the identity operator. Our goal is to establish bounds on the operators $m_q$ and $G_\zeta$ between appropriate function spaces such that the operator norm $\|m_qG_\zeta\| < 1$ for $|\zeta|$ large enough. If that is the case, the above equation has a unique solution given by the Neumann series
	\[
	s_\zeta = \sum_{j=0}^\infty (m_q G_\zeta)^j q. \]
	
	Let us begin by recalling some bounds on the operator $G_\zeta$ proved in \cite{SU}.
	\begin{prop}[Sylvester-Uhlmann]\label{syluhl}
		Let $\zeta \in \C^n$  be such that $|\zeta| \geq 1$ and $\zeta \cdot \zeta =0$, and let $0 < \delta <1/2$. Then $G_\zeta$ maps $L^2_{\delta} \to H^2_{-\delta}$ and satisfies the following norm bounds
		\begin{eqnarray*}
			\|G_\zeta u \|_{L^2_{-\delta}} &\lsim & |\zeta|^{-1}\|u\|_{L^2_{\delta}}\\
			\|G_\zeta u \|_{H^1_{-\delta}} &\lsim & \|u\|_{L^2_{\delta}}\\
			\|G_\zeta u \|_{H^2_{-\delta}} &\lsim & |\zeta|\|u\|_{L^2_{\delta}}
		\end{eqnarray*}
		In particular, for $k = |\zeta|$, we have the following scaled estimate:
		\begin{equation*}
			\|G_\zeta u\|_{H^{2}_{-\delta}[k]} \lsim k\|u\|_{L^2_\delta}. 
		\end{equation*}
	\end{prop}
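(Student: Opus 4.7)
The approach is the one given by Sylvester and Uhlmann: first reduce to a canonical $\zeta$, then establish the core weighted $L^2$ bound, and finally bootstrap to derivatives. Writing $\zeta = \zeta_R + i\zeta_I$ with $\zeta_R, \zeta_I \in \R^n$, the condition $\zeta \cdot \zeta = 0$ forces $|\zeta_R| = |\zeta_I| = |\zeta|/\sqrt{2} =: \tau$ and $\zeta_R \perp \zeta_I$. Completing the square in $p_\zeta(\xi) := -|\xi|^2 + 2i\zeta \cdot \xi$ yields
\[
p_\zeta(\xi) = \tau^2 - |\xi + \zeta_I|^2 + 2i\zeta_R \cdot (\xi + \zeta_I),
\]
so the Fourier shift $\xi \mapsto \xi + \zeta_I$ (corresponding to modulation by $e^{-ix \cdot \zeta_I}$ in physical space, an isometry on every $L^2_\delta$ and $H^m_\delta$), together with a rotation sending $\zeta_R$ to $\tau e_1$ (unitary on all spaces in play), reduces the problem to the symbol $p_\tau(\eta) := \tau^2 - |\eta|^2 + 2i\tau\eta_1$, which is singular on the $(n-2)$-sphere $\{\eta_1 = 0, \, |\eta'| = \tau\}$ with $\eta' = (\eta_2, \ldots, \eta_n)$.

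The heart of the argument is proving $\|T_\tau f\|_{L^2_{-\delta}} \lsim \tau^{-1} \|f\|_{L^2_\delta}$, where $T_\tau := \mathcal{F}^{-1} p_\tau^{-1} \mathcal{F}$. Factor $p_\tau(\eta) = -(\eta_1 - i\tau)^2 - |\eta'|^2$ and take the partial inverse Fourier transform in $\eta_1$ by contour integration: the resulting convolution kernel $K(x_1, \eta')$ splits according to whether $|\eta'| < \tau$ or $|\eta'| > \tau$, but in each case is bounded by roughly $\tau^{-1}$ modulo factors that Plancherel in $\eta'$ absorbs. The remaining one-dimensional weighted inequality---boundedness of a fractional-integral-type operator from $L^2((1+x_1^2)^{\delta}\, dx_1)$ to $L^2((1+x_1^2)^{-\delta}\, dx_1)$ uniformly in $\eta'$---follows from Stein--Weiss or Hardy-type estimates, and is valid precisely when $0 < \delta < 1/2$.

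Once the $L^2_\delta \to L^2_{-\delta}$ bound is in hand, the higher-regularity estimates follow from the defining identity $(\Delta + 2\zeta \cdot \nabla) G_\zeta = I$. On the Fourier side this reads $|\xi|^2 \widehat{G_\zeta f} = -\widehat{f} + 2i\zeta \cdot \xi \, \widehat{G_\zeta f}$, so combined with the interpolation
\[
\|\nabla u\|_{L^2_{-\delta}}^2 \lsim \|u\|_{L^2_{-\delta}} \|\Delta u\|_{L^2_{-\delta}} + \text{(lower-order weight terms)},
\]
obtained by integrating by parts against the weight (harmless for $|\delta| < 1/2$), one deduces $\|\nabla G_\zeta f\|_{L^2_{-\delta}} \lsim \|f\|_{L^2_\delta}$ and $\|\Delta G_\zeta f\|_{L^2_{-\delta}} \lsim |\zeta| \, \|f\|_{L^2_\delta}$. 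The scaled statement $\|G_\zeta u\|_{H^{2,k}_{-\delta}} \lsim k \|u\|_{L^2_\delta}$ with $k = |\zeta|$ is then immediate from the definition of the scaled norm, since $k^2 \cdot k^{-1} = k^1 \cdot 1 = k^0 \cdot k = k$ in the three summands.

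The main obstacle is the core weighted $L^2$ estimate. Two features make it delicate: the $\tau^{-1}$ gain must be extracted from the size of $K$ uniformly across the regimes $|\eta'| < \tau$ and $|\eta'| > \tau$, where the pole structure of $p_\tau^{-1}$ genuinely changes; and the range $0 < \delta < 1/2$ is sharp, since the underlying one-dimensional fractional integral ceases to be bounded at $\delta = 1/2$. Staying strictly inside this range, and verifying that the weighted one-dimensional inequality reduces faithfully from the $n$-dimensional kernel, is what makes the proof technical.
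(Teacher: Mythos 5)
The paper offers no internal proof to compare against here: Proposition~\ref{syluhl} is quoted from \cite{SU}, so what you have written is in effect a sketch of the classical Sylvester--Uhlmann argument. Much of it is fine: the reduction by modulation and rotation to the symbol $p_\tau(\eta)=\tau^2-|\eta|^2+2i\tau\eta_1$, the factorization $p_\tau(\eta)=-(\eta_1-i\tau)^2-|\eta'|^2$, the bootstrap from the weighted $L^2$ bound to the $H^1_{-\delta}$ and $H^2_{-\delta}$ bounds using $\Delta u=f-2\zeta\cdot\nabla u$ together with a weighted interpolation inequality, and the passage to the scaled norm are all standard and correct in outline.

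The gap is precisely at the step you compress into one sentence. After the partial Fourier transform in $\eta_1$, the slice kernel for $|\eta'|<\tau$ is (up to constants) $\frac{\theta(x_1)}{2|\eta'|}\bigl(e^{-(\tau-|\eta'|)x_1}-e^{-(\tau+|\eta'|)x_1}\bigr)$, and similarly for $|\eta'|>\tau$. It is indeed bounded by $C\tau^{-1}$, but size is not the issue: as $|\eta'|\to\tau$ the decay rate $\bigl|\,|\eta'|-\tau\bigr|$ degenerates and the kernel is essentially $\tau^{-1}\theta(x_1)$, i.e.\ $\tau^{-1}$ times the antiderivative operator. Its bound from $L^2((1+x_1^2)^{\delta}dx_1)$ to $L^2((1+x_1^2)^{-\delta}dx_1)$, uniformly in $\eta'$, holds precisely when $\delta>1/2$ (one needs $\int^{\infty}(1+x_1^2)^{-\delta}dx_1<\infty$; this is the Hardy/Muckenhoupt condition for the Volterra kernel), not when $\delta<1/2$ as you assert; Stein--Weiss bounds are not applicable because the near-resonant kernel is of order one, not a fractional integral of order $1-2\delta$. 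So your argument cannot close in the stated range, and in fact the symmetric estimate $\|G_\zeta f\|_{L^2_{-\delta}}\lsim|\zeta|^{-1}\|f\|_{L^2_{\delta}}$ fails for $\delta<1/2$: testing the dual bilinear form $\int \hat f\,\hat g/p_\zeta\,d\xi$ on a real bump of width $h$ in the two directions transverse to the characteristic sphere, centered at distance comparable to $h$ from it, gives a left-hand side of size $\tau^{-1}h$ against a right-hand side of size $\tau^{-1}h^{2-2\delta}$, and the ratio $h^{2\delta-1}$ blows up as $h\to0$ when $\delta<1/2$. What Sylvester and Uhlmann prove is the estimate with data weight $\delta+1$ and solution weight $\delta$ for $-1<\delta<0$, i.e.\ the symmetric form for $1/2<\delta<1$; the range $0<\delta<1/2$ printed in the proposition appears to be a slip (harmless for the rest of the paper, since $q$ is compactly supported and Lemma~\ref{lcompact} allows any fixed admissible weight), and your proof should be run for $\delta\in(1/2,1)$, where the uniform one-dimensional estimate you need is actually true.
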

	As an easy corollary, we obtain the following estimate for $G_\zeta$ on negative-order Sobolev spaces:
	\begin{corollary}\label{resolventbound}
		Let $\zeta \in \C^n$ be such that $\zeta \cdot \zeta = 0$ and $k = |\zeta| \geq 1$, and let $0 < \delta < 1/2$. Then $G_\zeta$ maps $H^{-1}_{\delta}[k] \to H^1_{-\delta}(\R^n)[k]$ and satisfies the bound
		\begin{equation}\label{bounddelta}
			\|G_\zeta u\|_{H^1_{-\delta}[k]} \lsim k\|u\|_{H^{-1}_{\delta}[k]}, \qquad u \in H^{-1}_{\delta}[k](\R^n). 
		\end{equation}
	\end{corollary}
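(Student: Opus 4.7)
The plan is to reduce to Proposition \ref{syluhl} by decomposing $u \in H^{-1,k}_\delta(\R^n)$ into an $L^2_\delta$-piece plus derivatives of $L^2_\delta$-pieces, using Proposition \ref{altern} with $m=1$. Concretely, write
\[
u = u_0 + \sum_{j=1}^n \partial_j u_j, \qquad k^{-1}\|u_0\|_{L^2_\delta} + \sum_{j=1}^n \|u_j\|_{L^2_\delta} = \|u\|_{H^{-1,k}_\delta}.
\]
Because $G_\zeta$ is a Fourier multiplier, it commutes with every $\partial_j$, so
\[
G_\zeta u = G_\zeta u_0 + \sum_{j=1}^n \partial_j G_\zeta u_j,
\]
and each summand lies in $H^2_{-\delta}$ by Proposition \ref{syluhl}, hence is at least in $H^{1,k}_{-\delta}$.

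Next I would control the scaled norm $\|G_\zeta u\|_{H^{1,k}_{-\delta}} = k\|G_\zeta u\|_{L^2_{-\delta}} + \sum_i \|\partial_i G_\zeta u\|_{L^2_{-\delta}}$ term by term, invoking the three bounds of Proposition \ref{syluhl}. For the $u_0$ contribution, $k\|G_\zeta u_0\|_{L^2_{-\delta}} \lesssim \|u_0\|_{L^2_\delta}$ and $\|\partial_i G_\zeta u_0\|_{L^2_{-\delta}} \lesssim \|G_\zeta u_0\|_{H^1_{-\delta}} \lesssim \|u_0\|_{L^2_\delta}$, giving a total of $\|u_0\|_{L^2_\delta} = k \cdot (k^{-1}\|u_0\|_{L^2_\delta})$. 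For each $u_j$ contribution, $k\|\partial_j G_\zeta u_j\|_{L^2_{-\delta}} \lesssim k\|G_\zeta u_j\|_{H^1_{-\delta}} \lesssim k\|u_j\|_{L^2_\delta}$ and $\|\partial_i \partial_j G_\zeta u_j\|_{L^2_{-\delta}} \lesssim \|G_\zeta u_j\|_{H^2_{-\delta}} \lesssim k\|u_j\|_{L^2_\delta}$. Adding everything gives
\[
\|G_\zeta u\|_{H^{1,k}_{-\delta}} \lesssim \|u_0\|_{L^2_\delta} + k\sum_{j=1}^n \|u_j\|_{L^2_\delta} = k\left( k^{-1}\|u_0\|_{L^2_\delta} + \sum_{j=1}^n \|u_j\|_{L^2_\delta} \right) = k\|u\|_{H^{-1,k}_\delta},
\]
which is exactly \eqref{bounddelta}.

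There is no real obstacle here: the scaled weight $k$ in the definitions of $H^{\pm 1,k}_{\pm\delta}$ is precisely calibrated so that the ``gain of two derivatives, loss of a factor $k$'' in Proposition \ref{syluhl} interpolates consistently between the endpoint cases. The only thing to verify carefully is that Proposition \ref{altern} (stated for $u \in H^{-m,k}_\delta$) produces representatives $u_\alpha \in L^2_\delta$ to which the $L^2_\delta \to H^2_{-\delta}$ mapping property of $G_\zeta$ applies — and that the commutation $G_\zeta \partial_j = \partial_j G_\zeta$ is valid, which follows at once from the Fourier multiplier definition of $G_\zeta$ (first on Schwartz functions, then by density/duality for $L^2_\delta$ distributions).
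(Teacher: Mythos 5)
Your proposal is correct and follows essentially the same route as the paper: decompose $u = u_0 + \sum_j \partial_j u_j$ via Proposition \ref{altern}, commute $G_\zeta$ with the derivatives, and apply the three Sylvester--Uhlmann bounds of Proposition \ref{syluhl} term by term, with the same bookkeeping of the $k$-weights. No gaps.
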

	\begin{proof}
		Let $u \in H^{-1}_{\delta}[k](\R^n)$. Then by Proposition \ref{altern}, there exist $u_0,u_1, \ldots ,u_n \in L^2_\delta(\R^n)$ such that $u = u_0 +\sum_{j=1}^n \partial_j u_j$ and
		\[
		k^{-1}\|u_0\|_{L^2_\delta} +\sum_{j=1}^n \|u_j\|_{L^2_\delta} \lsim \|u\|_{H^{-1}_{\delta}[k]}. \]
		Now, by Proposition \ref{syluhl} and the fact that $G_\zeta$ commutes with $\partial_j, j=1, \ldots,n$,
		\begin{eqnarray*}
			\|G_\zeta u_0\|_{L^2_{-\delta}} &\lsim & k^{-1}\|u_0\|_{L^2_\delta} \lsim \|u\|_{H^{-1}_{\delta}[k]}, \\
			\|G_\zeta \partial_j u_j\|_{L^2_{-\delta}} &\lsim & \|G_\zeta u_j\|_{H^1_{-\delta}} \lsim \|u_j\|_{L^2_\delta} \lsim \|u\|_{H^{-1}_{\delta}[k]}, \\
			\|\nabla G_\zeta u_0\|_{L^2_{-\delta}} &\lsim &  \|G_\zeta u_0\|_{H^1_{-\delta}} \lsim \|u_0\|_{L^2_\delta} \lsim k\|u\|_{H^{-1}_{\delta}[k]}, \\
			\|\nabla G_\zeta\partial_j  u_j\|_{L^2_{-\delta}} &\lsim & \|G_\zeta u_j\|_{H^2_{-\delta}} \lsim k\|u_j\|_{L^2_\delta} \lsim k\|u\|_{H^{-1}_{\delta}[k]}. 
		\end{eqnarray*}
		Combining all the above inequalities, we get \eqref{bounddelta}.
	\end{proof}
	
	With the bounds on $m_q$ and $G_\zeta$ in hand, we are now ready to prove the existence of CGO solutions. 
	
	\begin{theorem}\label{CGO}
		Let $q \in H^{-s,n/s}(\R^n)$, $0 < s \leq 1/2$ be such that $\textrm{supp }q$ is compact. Fix $\delta \in (0,1/2)$. Then there exists $M>0$ such that for all $\zeta \in \C^n$ satisfying
		\[
		\zeta \cdot \zeta = 0, \qquad |\zeta| \geq M, \]
		there exists a unique solution to
		\[
		(-\Delta + m_q)u = 0 \quad \textrm{in } \R^n \]
		of the form
		\[
		u = u_\zeta(x) = e^{x\cdot \zeta}(1+r_\zeta(x)) \]
		where $r_\zeta \in H^1_{-\delta}[k](\R^n)$.  Moreover, 
		\[
		\|r_\zeta\|_{H^1_{-\delta}[k]} \lsim |\zeta|^{s}, \]
	\end{theorem}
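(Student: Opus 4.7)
The plan is to follow the Neumann-series scheme outlined just before the theorem statement. Ansatzing $u = e^{x\cdot\zeta}(1+r_\zeta)$ reduces $(-\Delta+q)u = 0$ to $(-\Delta_\zeta+m_q)r_\zeta = -q$, and writing $r_\zeta = G_\zeta s_\zeta$ further reduces this to the integral equation $(I - m_q G_\zeta) s_\zeta = q$. I would work throughout in the pair of weighted scaled spaces $H^{-1,k}_{\delta}(\R^n)$ and $H^{1,k}_{-\delta}(\R^n)$, because Corollary \ref{resolventbound} and Corollary \ref{multiplierbound} combine cleanly between them, with $k = |\zeta|$.

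The first step is to show that $m_q G_\zeta$ is a strict contraction on $H^{-1,k}_{\delta}$ once $k$ is large. By Corollary \ref{resolventbound}, $G_\zeta$ maps $H^{-1,k}_{\delta}\to H^{1,k}_{-\delta}$ with norm $\lsim k$, while by \eqref{bilinear1delta} of Corollary \ref{multiplierbound} (applied with $\eta = -\delta$) the multiplier $m_q$ maps $H^{1,k}_{-\delta}\to H^{-1,k}_{\delta}$ with norm $\lsim k^{-2(1-s)}\omega(k)$. Composing,
\[
\|m_q G_\zeta\|_{H^{-1,k}_{\delta}\to H^{-1,k}_{\delta}} \lsim k^{-(1-2s)}\omega(k).
\]
Since $s \leq 1/2$ the $k$-power is bounded, and $\omega(k)\to 0$, so this norm is below $1/2$ for all $k\geq M$ with some $M$ large. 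This is exactly where the restriction $s \leq 1/2$ enters the argument: it is the threshold at which the multiplier gain balances the resolvent loss. Next, to give meaning to the data, I would fix $\varphi \in C_c^\infty(\R^n)$ with $\varphi \equiv 1$ on $\mathrm{supp}(q)$ and apply \eqref{bilinear1kdelta} of Corollary \ref{multiplierbound} with $f = \varphi$ to get
\[
\|q\|_{H^{-1,k}_{\delta}} = \|m_q \varphi\|_{H^{-1,k}_{\delta}} \lsim k^{-(1-s)}\omega(k)\|\varphi\|_{H^1_\eta}.
\]
Then the Neumann series $s_\zeta = \sum_{j\geq 0}(m_q G_\zeta)^j q$ converges in $H^{-1,k}_{\delta}$, produces a solution of $(I - m_q G_\zeta)s_\zeta = q$ with $\|s_\zeta\|_{H^{-1,k}_{\delta}} \lsim k^{-(1-s)}\omega(k)$, and Corollary \ref{resolventbound} gives
\[
\|r_\zeta\|_{H^{1,k}_{-\delta}} \lsim k\,\|s_\zeta\|_{H^{-1,k}_{\delta}} \lsim k^{s}\omega(k) \lsim k^{s},
\]
which is the claimed bound; by construction $u_\zeta = e^{x\cdot\zeta}(1+r_\zeta)$ then solves $(-\Delta+m_q)u_\zeta = 0$.

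For uniqueness, suppose $\tilde r \in H^{1,k}_{-\delta}$ also satisfies $(-\Delta_\zeta + m_q)\tilde r = -q$. By Corollary \ref{multiplierbound} we have $m_q\tilde r \in H^{-1,k}_{\delta}$, and hence $\Delta_\zeta \tilde r = m_q\tilde r + q \in H^{-1,k}_{\delta}$. Since $G_\zeta$ is the Fourier-multiplier inverse of $\Delta_\zeta$ on tempered distributions, the identity $\tilde r = G_\zeta \Delta_\zeta \tilde r$ holds, so $\tilde s := \Delta_\zeta \tilde r$ lies in $H^{-1,k}_{\delta}$ and solves $(I - m_q G_\zeta)\tilde s = q$. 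Invertibility of $I - m_q G_\zeta$ on $H^{-1,k}_{\delta}$ (for $k \geq M$) then forces $\tilde s = s_\zeta$, and so $\tilde r = G_\zeta \tilde s = r_\zeta$. The main obstacle I expect is not conceptual but rather the bookkeeping of $k$-exponents: one must verify that the product of the resolvent gain $k^1$ and the multiplier decay $k^{-2(1-s)}\omega(k)$ sits strictly below $1$ (which gives the admissible range $s \leq 1/2$) and that the compactly-supported-data estimate contributes precisely the factor $k^{-(1-s)}$, as together these determine the final exponent $k^{s}$ in the conclusion.
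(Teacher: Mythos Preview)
Your proof is correct and follows the same Neumann-series strategy as the paper: contraction of $m_q G_\zeta$ on $H^{-1,k}_{\delta}$ via Corollaries \ref{resolventbound} and \ref{multiplierbound}, the data bound $\|q\|_{H^{-1,k}_{\delta}}\lsim k^{-(1-s)}$ (your route through \eqref{bilinear1kdelta} is in fact slightly more direct than the paper's, which detours through $H^{-s}$ and embeddings), and the resulting estimate $\|r_\zeta\|_{H^{1,k}_{-\delta}}\lsim k^{s}$. One small caution on your uniqueness step: the identity $\tilde r = G_\zeta\Delta_\zeta\tilde r$ is not a general Fourier-multiplier fact (the symbol of $\Delta_\zeta$ vanishes on a codimension-two sphere), but rather the Sylvester--Uhlmann left-inverse property specific to the weighted space $H^{1}_{-\delta}$; with that understood, your uniqueness argument is actually more explicit than the paper's, which only records uniqueness of $s_\zeta$ in \eqref{neumann}.
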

	\begin{proof}
		As seen before,  $u_\zeta = e^{x\cdot \zeta}(1+r_\zeta)$ satisfies $(-\Delta + q)u = 0$ if and only if
		\[
		(-\Delta_\zeta +q)r_\zeta = -q \]
		where $\Delta_\zeta = e^{-\zeta \cdot x}\Delta e^{\zeta\cdot x}$. We will look for solutions of the form  $r_\zeta = G_\zeta s_\zeta$.  Such an $s_\zeta$ should satisfy
		\begin{equation}\label{neumann}
			(I-m_q \circ G_\zeta)s_\zeta = q. 
		\end{equation}
		Let $k = |\zeta|$. It follows from Corollary \ref{resolventbound} and \eqref{bilinear1delta} from Corollary \ref{multiplierbound} that
		\begin{eqnarray*}
			\|G_\zeta\|_{H^{-1}_{\delta}[k] \to H^1_{-\delta}[k]} &\lsim & k,\\
			\|m_q\|_{H^1_{-\delta}[k] \to H^{-1}_{\delta}[\delta]} &\lsim & k^{-2(1-s)}\omega(k)
		\end{eqnarray*}
		where $\omega(k) \to 0$ as $k \to \infty$. Therefore, $\|m_q \circ G_\zeta\| \lsim k^{-1+2s}\omega(k) \to 0$ as $k \to \infty$ and there exists $M >0$ such that for $k = |\zeta| \geq M$,
		\[
		\|m_q \circ G_\zeta\|_{H^{-1}_{\delta}[k] \to H^{-1}_{\delta}[k]} \leq \frac{1}{2}. \]
		Moreover, $q \in H^{-1}_{\delta}[k](\R^n)$. Indeed, suppose $\varphi \in C_c^\infty(\R^n)$ is such that $\varphi \equiv 1$ on $\textrm{supp }q$. Clearly  $q = q\varphi =  m_q(\varphi)$. Applying Proposition \ref{mV} with $k=1$, we get
		\begin{equation*}
			\|q\|_{H^{-s}} = \|\varphi q\|_{H^{-s}} \lsim \|\varphi\|_{H^{s}} \lsim\|\varphi\|_{H^{s}}.
		\end{equation*}
		Therefore,
		\begin{eqnarray*}
			\|q\|_{H^{-1}_{\delta}[k]} = \|\varphi q\|_{H^{-1}_{\delta}[k]} &\lsim & \|q\|_{H^{-1}[k]} \qquad \textrm{by Lemma \ref{lcompact}}\\
			&\lsim & k^{-(1-s)}\|q\|_{H^{-s}[k]} \\
			&\lsim & k^{-(1-s)}\|q\|_{H^{-s}}\\
			&\lsim & k^{-(1-s)}\|\varphi\|_{H^{s}}.
		\end{eqnarray*}
		Thus, for all $|\zeta| =k \geq M$, \eqref{neumann} has a unique solution given by the Neumann series
		\[
		s_\zeta = \sum_{j=0}^{\infty} (m_q \circ G_\zeta)^j q \]
		and we have the estimates
		\begin{eqnarray}
			\|s_\zeta\|_{H^{-1}_{\delta}[k]} \lsim \|q\|_{H^{-1}_{\delta}[k]} &\lsim & k^{-(1-s)},\\
			\|r_\zeta\|_{H^1_{-\delta}[k]} = \|G_\zeta s_\zeta\|_{H^1_{-\delta}[k]} &\lsim & k^{s}. \label{rbound}
		\end{eqnarray}
		This completes the proof. 
	\end{proof}
	
\section{Uniqueness and Reconstruction}
	Using the integral identity from Proposition \ref{integral} and appropriate CGO solutions, we will be able to reconstruct the Fourier transform of $q$.
	\begin{theorem}
		Let $\Omega$ be a bounded Lipschitz domain in $\R^n$, $n\geq 3$, and $D \subset \Omega$ a compact subset. Let $q \in H^{-1/2,2n}_{D}(\Omega)$ be such that $0$ is not a Dirichlet eigenvalue of $(-\Delta+q)$ in $\Omega$. Let $\xi \in \R^n$ be such that $\xi \neq 0$. Then for $k >0$ sufficiently large, there exist $\zeta_1,\zeta_2 \in \C^n$ with $\zeta_j\cdot \zeta_j =0$ and $|\zeta_j| =k $, $j=1,2,$ such that
		\[
		\lim_{k \to \infty}\langle (\Lambda_q - \Lambda_0)(u_{\zeta_1}|_{\pa \Omega}),e^{x\cdot \zeta_2}\rangle = \langle q,e^{-ix\cdot \xi}\rangle = \int_{\Omega}qe^{-ix\cdot \xi}\,dx \]
		where $\Lambda_0$ denotes the Dirichlet-to Neumann map for $-\Delta$ and $u_{\zeta_1}$ is the unique solution to $(-\Delta+q)u=0$ of the form
		\[
		u_\zeta = e^{x\cdot \zeta}(1+r_\zeta), \qquad r_\zeta \in H^1_{-\delta}[k] \]
		constructed in Theorem \ref{CGO}.
	\end{theorem}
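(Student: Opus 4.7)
The plan is to use Alessandrini's identity (Proposition \ref{integral}) with the CGO solution of Theorem \ref{CGO} paired against a harmonic complex exponential, so that the product selects a Fourier mode of $q$. Given $\xi \in \R^n \setminus \{0\}$, since $n \geq 3$ I can choose mutually orthogonal real unit vectors $\hat\alpha, \hat\eta \in \R^n$, both perpendicular to $\xi$. For $k \geq \max(M,|\xi|)$, with $M$ as in Theorem \ref{CGO}, set
\[
\zeta_1 = \tfrac{k}{\sqrt{2}}\hat\alpha + i\Bigl(-\tfrac{\xi}{2} + \sqrt{\tfrac{k^2}{2} - \tfrac{|\xi|^2}{4}}\,\hat\eta\Bigr),
\qquad
\zeta_2 = -\tfrac{k}{\sqrt{2}}\hat\alpha + i\Bigl(-\tfrac{\xi}{2} - \sqrt{\tfrac{k^2}{2} - \tfrac{|\xi|^2}{4}}\,\hat\eta\Bigr).
\]
A direct computation, using the orthogonality of $\hat\alpha$ with $\xi$ and $\hat\eta$, gives $\zeta_j \cdot \zeta_j = 0$, $|\zeta_j| = k$, and $\zeta_1 + \zeta_2 = -i\xi$, so $e^{x\cdot\zeta_1}e^{x\cdot\zeta_2} = e^{-ix\cdot\xi}$. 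Let $u_{\zeta_1} = e^{x\cdot\zeta_1}(1+r_{\zeta_1})$ be the CGO solution of $(-\Delta + q)u = 0$ supplied by Theorem \ref{CGO}, and take $u_2(x) := e^{x\cdot\zeta_2}$, which is harmonic.

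Applying Proposition \ref{integral} with $q_1 = q$, $q_2 = 0$, and the restrictions $u_1 = u_{\zeta_1}|_\Omega, u_2|_\Omega \in H^1(\Omega)$ yields
\[
\bigl\langle (\Lambda_q - \Lambda_0)(u_{\zeta_1}|_{\pa\Omega}), e^{x\cdot\zeta_2}\bigr\rangle_{\pa\Omega} = \int_\Omega q \cdot u_{\zeta_1} \cdot e^{x\cdot\zeta_2}\, dx = \langle q, e^{-ix\cdot\xi}\rangle + \bigl\langle q, e^{-ix\cdot\xi} r_{\zeta_1}\bigr\rangle.
\]
The first summand on the right is already the target quantity, so it remains to show that the error $E(k) := \langle q, e^{-ix\cdot\xi} r_{\zeta_1}\rangle$ tends to $0$ as $k \to \infty$.

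For this, fix $\chi \in C_c^\infty(\R^n)$ with $\chi \equiv 1$ on $\supp q$ and write $E(k) = \langle q, (\chi e^{-ix\cdot\xi})(\chi r_{\zeta_1})\rangle$. Invoking the bilinear bound \eqref{bilinear1k} of Theorem \ref{mV} at the critical exponent $s = 1/2$, $p = 2n$,
\[
|E(k)| \lsim \omega(k)\,k^{-1/2}\,\|\chi e^{-ix\cdot\xi}\|_{H^1}\,\|\chi r_{\zeta_1}\|_{H^{1,k}}.
\]
The first Sobolev factor is bounded by a constant depending only on $\xi$ and $\chi$, and Lemma \ref{lcompact} combined with the CGO decay from Theorem \ref{CGO} gives $\|\chi r_{\zeta_1}\|_{H^{1,k}} \lsim \|r_{\zeta_1}\|_{H^{1,k}_{-\delta}} \lsim k^{1/2}$. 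The two powers of $k$ cancel, leaving $|E(k)| \lsim \omega(k) \to 0$.

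The main obstacle is precisely this borderline scaling: at regularity $W^{-1/2,2n}$ the CGO remainder grows like $k^{1/2}$ while the multiplier bound decays only like $k^{-1/2}$, so the algebraic powers of $k$ match exactly. The argument closes only because Theorem \ref{mV} furnishes a strictly vanishing factor $\omega(k)$, produced by the $L^{2n}$-mollification of the potential's Riesz lift $W$; any merely uniform operator bound on $m_q$ would leave $E(k) = O(1)$ and prevent recovery of $\widehat{q}(\xi)$.
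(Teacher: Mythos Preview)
Your proof is correct and follows essentially the same route as the paper: the same choice of $\zeta_1,\zeta_2$, Alessandrini's identity from Proposition~\ref{integral}, and the error bound via the $k^{-1/2}\omega(k)$ multiplier estimate against the $k^{1/2}$ growth of $r_{\zeta_1}$. The only cosmetic difference is that you insert the cutoff by hand and invoke \eqref{bilinear1k} directly, whereas the paper cites the packaged version \eqref{bilinear1kdelta} from Corollary~\ref{multiplierbound}; these are the same estimate.
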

	\begin{proof}
		Let $\alpha, \beta$ be unit vectors in $\R^n$ such that $\{\xi/|\xi|, \alpha, \beta\}$ form an orthonormal set. Define $\zeta_1,\zeta_2 \in \C^n$ by
		\begin{eqnarray}
			\zeta_1 &=& \frac{k}{\sqrt{2}}\alpha +i\left( -\frac{\xi}{2}+\sqrt{\frac{k^2}{2}-\frac{|\xi|^2}{4}}\beta\right),  \label{zeta1}\\
			\zeta_2 &=& -\frac{k}{\sqrt{2}}\alpha +i\left( -\frac{\xi}{2}-\sqrt{\frac{k^2}{2}-\frac{|\xi|^2}{4}}\beta\right).   \label{zeta2}
		\end{eqnarray}
		It is easy to check that $k = |\zeta_1|=|\zeta_2|$ and $\zeta_1\cdot \zeta_1 = \zeta_2\cdot \zeta_2 = 0$. Therefore, by Theorem \ref{CGO}, for $k$ large enough, there exists a solution $u_{\zeta_1} = e^{\zeta_1 \cdot x}(1+r_{\zeta_1}(x))$ of $(-\Delta+q)u = 0$ such that $\|r_{\zeta_1}\|_{H^1_{-\delta}[k]} \lsim k^{1/2}$. Moreover, the fact that $\zeta_2\cdot \zeta_2=0$ implies $\Delta e^{x\cdot \zeta_2} = 0$. Therefore, by Proposition \ref{integral},
		\begin{eqnarray*}
			\langle (\Lambda_q - \Lambda_0)(u_{\zeta_1}|_{\pa \Omega}), e^{x\cdot \zeta_2}\rangle &=& \langle q, u_{\zeta_1}e^{x\cdot \zeta_2}\rangle \\
			&=& \langle q, e^{x\cdot(\zeta_1+\zeta_2)}(1+r_{\zeta_1})\rangle \\
			&=& \langle q, e^{-ix\cdot \xi}\rangle + \langle q, e^{-ix\cdot \xi}r_{\zeta_1}\rangle.
		\end{eqnarray*}
		Now, let $\varphi \in C_c^{\infty}(\R^n)$ be such that $\varphi \equiv 1$ on $\overline{\Omega} \supset \textrm{ supp }q$. By \eqref{bilinear1kdelta} in Corollary \ref{multiplierbound}
		\begin{eqnarray*}
			|\langle q, e^{-ix\cdot \xi}r_{\zeta_1}\rangle| &=& |\langle q, e^{-ix\cdot \xi}\varphi r_{\zeta_1}\rangle|\\
			&\lsim & k^{-1/2}\omega(k)\|e^{-ix\cdot \xi}\varphi\|_{H^1}\|r_{\zeta_1}\|_{H^1_{-\delta}[k]}\\
			&\lsim & k^{-1/2}\omega(k) k^{1/2} = \omega(k) \qquad \textrm{(Theorem \ref{CGO})},
		\end{eqnarray*}
		where $\omega(k) \to 0$ as $k \to \infty$. Therefore, it follows that
		\[
		\lim_{k \to \infty}\langle (\Lambda_q - \Lambda_0)(u_{\zeta_1}|_{\pa \Omega}),e^{x\cdot \zeta_2}\rangle = \langle q,e^{-ix\cdot \xi}\rangle = \widehat{q}(\xi). \]
	\end{proof}
	
	Thus, we see that if $u_\zeta|_{\pa \Omega}$ can somehow be determined, we can recover $\widehat{q}(\xi)$ for $\xi \neq 0$ from the knowledge of $\Lambda_q$. Since $q$ is compactly supported, $\widehat{q}$ is real analytic on $\R^n$ by the Paley-Wiener theorem, and thus $\widehat{q}(0)$ can also be determined by continuity. Therefore, the goal now is to find a procedure to determine $u_\zeta|_{\pa \Omega}$. We will characterize $u_\zeta|_{\pa \Omega}$ as the unique solution of a certain boundary integral equation of Fredholm type. The method is due to Nachman \cite{nachmanrecon}. We will mostly follow the presentation and notation in \cite{FSU}.\\
	
	Let us begin by fixing some notation. We will use $\Omega_+$ to denote the exterior domain $\R^n \setminus \overline{\Omega}$. Let $\Tr: H^1_{\textrm{loc}}(\R^n) \to H^{1/2}(\pa \Omega)$ denote the usual trace operator $\Tr(u) = u|_{\pa \Omega}$. Similarly, we let $\Tr_{+}:H^1(\Omega_+) \to H^{1/2}(\pa \Omega)$ and $\Tr_{-}:H^1(\Omega) \to H^{1/2}$ denote the trace operators in the exterior and interior domains respectively.\\
	
	Let $K_0(x,y) = c_n|x-y|^{2-n}$ be the standard Green's function for the Laplacian. We know that the operator with Schwartz kernel $K_0$ (also denoted by $K_0$) maps $H^{-1}_{D}(\R^n) \to H^1_{\textrm{loc}}(\R^n)$ and satisfies 
	\[
	\Delta K_0 f = f, \qquad f \in H^{-1}_{D}(\R^n). \]
	Now let $\zeta \in \C^n$ be such that $\zeta \cdot \zeta = 0$ and $|\zeta| \geq 1$. We define an analogous operator $K_\zeta$ by
	\[
	K_\zeta(f) =e^{x\cdot \zeta}G_\zeta(e^{-x\cdot \zeta}f). \]
	
	\begin{prop}
		The operator $K_\zeta$ maps $ H^{-1}_{D}(\R^n) \to H^1_{\textrm{loc}}(\R^n)$ and satisfies the following properties:
		\begin{enumerate}[(a)]
			\item $\Delta K_\zeta f = f$ for all $f \in H^{-1}_{D}(\R^n)$. 
			
			\item There exists $R_\zeta \in C^\infty(\R^n \times \R^n)$ such that $K_\zeta = K_0 +R_\zeta$. The operator with Schwartz kernel $R_\zeta$  maps $H^{-m}_{D}(\R^n) \to C^\infty(\R^n)$ for all $m \in \N$.
		\end{enumerate}
	\end{prop}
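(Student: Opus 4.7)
My plan is to reduce both parts to the mapping properties of $G_\zeta$ established in Corollary~\ref{resolventbound}, together with the conjugation identity $\Delta(e^{x\cdot\zeta}v) = e^{x\cdot\zeta}\Delta_\zeta v$ and, for part~(b), elliptic regularity for the Laplacian in the form of Weyl's lemma. The whole point is that $K_\zeta$ and $K_0$ are both right inverses of $\Delta$, so their difference must be a translation-invariant harmonic distribution, hence smooth.

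For part (a), I would first check the mapping property $H^{-1}_{\textrm{comp}}(\R^n) \to H^1_{\textrm{loc}}(\R^n)$: given $f \in H^{-1}_{\textrm{comp}}(\R^n)$, the smooth multiplier $e^{-x\cdot\zeta}$ keeps $e^{-x\cdot\zeta}f$ compactly supported in $H^{-1}$ and hence, for any $\delta \in (0,1/2)$ and $k = |\zeta|$, in $H^{-1,k}_{\delta}$; Corollary~\ref{resolventbound} then places $G_\zeta(e^{-x\cdot\zeta}f)$ in $H^{1,k}_{-\delta} \subset H^1_{\textrm{loc}}$, and multiplication by the smooth function $e^{x\cdot\zeta}$ preserves local regularity. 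For the identity $\Delta K_\zeta f = f$, I would read the chain
\[
\Delta(e^{x\cdot\zeta} G_\zeta(e^{-x\cdot\zeta}f)) = e^{x\cdot\zeta}\Delta_\zeta G_\zeta(e^{-x\cdot\zeta}f) = e^{x\cdot\zeta}\cdot e^{-x\cdot\zeta}f = f
\]
directly off the definition $\Delta_\zeta = e^{-x\cdot\zeta}\Delta\, e^{x\cdot\zeta}$ and the fact that $G_\zeta$ is a right inverse of $\Delta_\zeta$ on the relevant weighted spaces.

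For part (b), the key observation I would exploit is that both $K_\zeta$ and $K_0$ are translation-invariant. Concretely, $K_\zeta$ is convolution with the distribution $e^{u\cdot\zeta}G_\zeta(u)$, where $G_\zeta(u) \in \mathcal{D}'(\R^n)$ is the Schwartz kernel of $G_\zeta$ (the inverse Fourier transform of the symbol $1/(-|\xi|^2 + 2i\zeta\cdot\xi)$), while $K_0$ is convolution with $c_n|u|^{2-n}$. Therefore $K_\zeta - K_0$ has translation-invariant Schwartz kernel $R_\zeta(x-y)$ with
\[
R_\zeta(u) := e^{u\cdot\zeta}G_\zeta(u) - c_n|u|^{2-n}.
\]
Since $\Delta(e^{u\cdot\zeta}G_\zeta(u)) = e^{u\cdot\zeta}\Delta_\zeta G_\zeta(u) = e^{u\cdot\zeta}\delta_0 = \delta_0$ and $\Delta(c_n|u|^{2-n}) = \delta_0$, I get $\Delta R_\zeta = 0$ in $\mathcal{D}'(\R^n)$; Weyl's lemma then yields $R_\zeta \in C^\infty(\R^n)$, so the joint kernel $(x,y) \mapsto R_\zeta(x-y)$ lies in $C^\infty(\R^n\times\R^n)$. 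For the stated mapping $H^{-k}_{\textrm{comp}}(\R^n) \to C^\infty(\R^n)$, I would define, for $f \in \mathcal{E}'(\R^n)$, the action $(R_\zeta f)(x) := \langle f_y, R_\zeta(x-y)\rangle$ and differentiate under the distributional pairing, using joint smoothness of the kernel to move derivatives past $f$.

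The step I expect to require the most care is the rigorous identification of $G_\zeta$ as a translation-invariant convolution operator whose Schwartz kernel $G_\zeta(u) \in \mathcal{D}'(\R^n)$ admits the manipulations above (multiplication by $e^{u\cdot\zeta}$, application of $\Delta_\zeta$ in the sense of distributions, pairing against the harmonic $e^{u\cdot\zeta}$ near $u=0$). This is essentially bookkeeping, since $G_\zeta$ commutes with translations by its Fourier-multiplier definition, but must be done carefully because the symbol is singular on the codimension-two set $\{-|\xi|^2 + 2i\zeta\cdot\xi = 0\}$. Once this foundational point is in place, the heart of (b)—that the diagonal singularities of $K_\zeta$ and $K_0$ cancel because both are fundamental solutions of $\Delta$—is essentially automatic from Weyl's lemma.
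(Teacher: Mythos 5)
Your proposal is correct. Part (a) is essentially identical to the paper's argument: the same factorization through the weighted spaces (compactly supported $H^{-1}$ data lands in $H^{-1}_{\delta}$, Corollary \ref{resolventbound} sends it to $H^{1}_{-\delta}$, and the exponential factors preserve local regularity), and the same conjugation identity $\Delta K_\zeta f = e^{x\cdot\zeta}\Delta_\zeta G_\zeta(e^{-x\cdot\zeta}f) = f$. For part (b) you use the same key fact as the paper---hypoellipticity of $\Delta$ applied to the difference of two right inverses---but you deploy it at the level of the convolution kernel: you identify $K_\zeta$ as convolution with the Faddeev-type distribution $e^{u\cdot\zeta}g_\zeta(u)$, observe that $R_\zeta(u) = e^{u\cdot\zeta}g_\zeta(u) - c_n|u|^{2-n}$ is harmonic because both terms are fundamental solutions, and conclude smoothness by Weyl's lemma, whereas the paper argues at the operator level that $\Delta(R_\zeta f) = \Delta K_\zeta f - \Delta K_0 f = 0$ for each compactly supported $f$ and then invokes elliptic regularity. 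The trade-off is roughly as you anticipate: your route costs the bookkeeping of checking that the weighted-space extension of $G_\zeta$ really acts by convolution with the tempered distribution $g_\zeta = \bigl((-|\xi|^2+2i\zeta\cdot\xi)^{-1}\bigr)^{\vee}$ on compactly supported data (which is fine, since the symbol is locally integrable and the two definitions agree on a dense class), but in exchange it directly yields the joint smoothness of the kernel $(x,y)\mapsto R_\zeta(x-y)$ and the mapping $H^{-k}_{\textrm{comp}}\to C^\infty$ for every $k$, points that the paper's one-line appeal to elliptic regularity leaves implicit (the paper only defines $K_\zeta$ on $H^{-1}_{\textrm{comp}}$, so for $k>1$ some such kernel identification is needed anyway). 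In short: same decomposition and same key lemma, with your version being the more explicit and self-contained implementation of part (b).
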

	\begin{proof}
		Let $ 0 < \delta <1/2$ be arbitrary. Clearly, $f \mapsto e^{-x\cdot \zeta}f$ maps $H^{-1}_{D}$ to $H^{-1}_{D} \hookrightarrow H^{-1}_\delta(\R^n)$. Then by Corollary \ref{resolventbound}, $f \mapsto G_{\zeta}(e^{-x\cdot \zeta}f)$ takes $H^{-1}_{D}$ into $H^1_{-\delta}(\R^n)$. Finally, multiplication by $e^{x\cdot \zeta}$ takes $H^1_{-\delta}(\R^n) \to H^1_{\textrm{loc}}(\R^n)$, which proves that $K_\zeta: H^{-1}_{D}(\R^n) \to H^1_{\textrm{loc}}(\R^n)$.\\
		Now, by definition of $K_\zeta$,
		\[
		\Delta K_\zeta f = e^{x\cdot \zeta}\Delta_\zeta G_{\zeta} (e^{-x\cdot \zeta}f) =f, \quad \forall f \in H^{-1}_{D}(\R^n) \]
		since $G_\zeta$ is a right inverse of $\Delta_\zeta$. This proves (a). Next, define $R_\zeta = K_\zeta - K _0$. Then for any $H^{-1}_{D}(\R^n)$,
		\[
		\Delta R_\zeta f = \Delta K_\zeta f - \Delta K_0f =0. \]
		Therefore, (b) follows from the Elliptic Regularity theorem.
	\end{proof}
	
	\begin{defn}
		The standard Single layer potential is defined as the operator 
		\[
		S_0 = K_0 \Tr^*: H^{-1/2}(\pa \Omega) \to H^1_{\textrm{loc}}(\R^n). \] 
		Analogously, we define the modified (or Fadeev-type) Single layer potential $S_\zeta$ for $\pa \Omega$ by
		\[
		S_\zeta = K_\zeta \Tr^{*} : H^{-1/2}(\pa \Omega) \to H^1_{\textrm{loc}}(\R^n). \]
	\end{defn}
	We will show that $u_\zeta|_{\pa \Omega}$ can be characterized as the unique solution $f \in H^{1/2}(\pa \Omega)$ of the following Boundary Integral Equation:
	\begin{equation}\label{bie}
		(\textrm{Id} +\Tr S_\zeta(\Lambda_q-\Lambda_0))f = e^{x\cdot \zeta} \quad \textrm{ on } \pa \Omega. 
	\end{equation}
	
	\begin{theorem}\label{BIE}
		Let $q \in H^{-1/2,2n}_{D}(\Omega)$ be such that $0$ is not a Dirichlet eigenvalue of $(-\Delta +q)$ in $\Omega$. Let $\zeta \in \C^n$ be such that $\zeta \cdot \zeta =0$ and $|\zeta|$ is sufficiently large, and let $0 <\delta <1/2$. Consider the following problems:
		\begin{eqnarray*}
			&(DE)& \left\{ 
			\begin{array}{l}
				(-\Delta +q)u = 0 \quad \textrm{ in } \R^n, \\
				e^{-x\cdot \zeta}u-1 \in H^1_{-\delta}(\R^n).
			\end{array}
			\right. \\
			&(EP) & \left\{
			\begin{array}{ll}
				(i) & \Delta \widetilde{u} = 0 \quad \textrm{in } \Omega_{+},\\
				(ii) & \widetilde{u} = u|_{\Omega_+} \quad \textrm{for some }u \in H^1_{\textrm{loc}}(\R^n), \\
				(iii) & e^{-x\cdot \zeta}\widetilde{u}-1 = r|_{\Omega_+} \quad \textrm{for some }r \in H^1_{-\delta}(\R^n), \\
				(iv) & (\pa_\nu u)_{+} = \Lambda_q(\Tr_{+}u) \quad \textrm{on } \pa \Omega.
			\end{array}
			\right. \\
			&(BIE)& \left\{ 
			\begin{array}{l}
				(\textrm{Id} + \Tr S_\zeta (\Lambda_q-\Lambda_0))f = e^{x\cdot \zeta} \quad \textrm{ on } \pa \Omega,\\
				f \in H^{1/2}(\pa \Omega).
			\end{array}
			\right.
		\end{eqnarray*}
		Each of these problems has a unique solution. Furthermore, they are equivalent in the following sense: If $u$ solves (DE), $\widetilde{u} = u|_{\Omega_+}$ solves (EP) and conversely, if $\widetilde{u}$ solves (EP), there exists a solution $u$ of (DE) such that $\widetilde{u} = u|_{\Omega_+}$. Also, if $u$ solves (DE), $f := u|_{\pa \Omega}$ solves (BIE) and conversely, if $f$ solves (BIE), there exists a solution $u$ of (DE) such that $f = u|_{\pa \Omega}$.
	\end{theorem}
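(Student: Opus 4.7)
The plan is to prove that each of the three problems is uniquely solvable by establishing the chain of equivalences $(\mathrm{DE}) \Leftrightarrow (\mathrm{EP})$ and $(\mathrm{DE}) \Leftrightarrow (\mathrm{BIE})$, and then appealing to Theorem~\ref{CGO} for the existence and uniqueness of (DE). For (DE), existence is exactly the content of Theorem~\ref{CGO}. Uniqueness reduces to showing that the only $r \in H^{1,k}_{-\delta}(\R^n)$ satisfying $(-\Delta_\zeta + m_q)r = 0$ is $r = 0$: since $m_q r \in H^{-1,k}_\delta$ by Corollary~\ref{multiplierbound} and $G_\zeta m_q r \in H^{1,k}_{-\delta}$ by Corollary~\ref{resolventbound}, the difference $r - G_\zeta m_q r$ lies in the kernel of $\Delta_\zeta$ on $H^{1,k}_{-\delta}$ and must vanish by the usual Sylvester--Uhlmann uniqueness for $\Delta_\zeta$ in weighted $L^2$ spaces; the contraction estimate $\|m_q \circ G_\zeta\| < 1/2$ from the proof of Theorem~\ref{CGO} then forces $r = 0$.

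The equivalence $(\mathrm{DE}) \Leftrightarrow (\mathrm{EP})$ is essentially a restriction/extension argument. Given $u$ solving (DE), setting $\tilde u := u|_{\Omega_+}$ gives (i) since $\mathrm{supp}\, q \Subset \Omega$, while (ii)-(iii) are immediate and (iv) is a reformulation of the global equation tested against $H^1(\Omega_+)$ functions. Conversely, given $\tilde u$ solving (EP), solve the interior boundary value problem $(-\Delta+m_q)u_- = 0$ with boundary data $\gamma_+\tilde u$ via Proposition~\ref{fredholm}; gluing $\tilde u$ and $u_-$ across $\partial\Omega$ produces an $H^1_{\mathrm{loc}}(\R^n)$ function, and condition (iv) matches the interior and exterior normal derivatives in the weak sense, so the glued function solves $(-\Delta+m_q)u=0$ globally; the decay condition transfers from (iii) on $\Omega_+$ and boundedness of $u_-$ on $\Omega$.

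For $(\mathrm{DE}) \Leftrightarrow (\mathrm{BIE})$, the forward direction proceeds by a representation formula. Given $u$ solving (DE) with $f := u|_{\partial\Omega}$, let $h_0 \in H^1(\Omega)$ be the harmonic extension of $f$ into $\Omega$, and define $w := u$ on $\Omega_+$, $w := h_0$ on $\Omega$. Then $w \in H^1_{\mathrm{loc}}(\R^n)$ is harmonic on each side with $[\partial_\nu w] = \Lambda_q f - \Lambda_0 f$, so that $\Delta w = \gamma^*((\Lambda_q - \Lambda_0)f)$ distributionally on $\R^n$. The function $\phi := w - e^{x\cdot\zeta}$ satisfies $\Delta\phi = \gamma^*((\Lambda_q - \Lambda_0)f)$ and $e^{-x\cdot\zeta}\phi \in H^1_{-\delta}(\R^n)$; a second Liouville-type application of Sylvester--Uhlmann uniqueness for $\Delta_\zeta$ yields $\phi = K_\zeta\gamma^*((\Lambda_q - \Lambda_0)f) = S_\zeta((\Lambda_q - \Lambda_0)f)$, and taking boundary traces gives (BIE). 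Conversely, given $f$ solving (BIE), define $u := e^{x\cdot\zeta} - S_\zeta((\Lambda_q - \Lambda_0)f)$ in $\Omega_+$ and let $u$ in $\Omega$ be the interior solution of $(-\Delta+m_q)u = 0$ with boundary data $f$; the jump relations for $S_\zeta$ and the definition of $\Lambda_q$ ensure that $u$ assembles into a global (DE) solution. Uniqueness of (EP) and (BIE) then follows by transferring through the equivalences.

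The main obstacle is that the rough regularity $q \in W^{-1/2,2n}_{\mathrm{comp}}$ forces every manipulation involving traces, normal derivatives, distributional products $qu$, and jump relations to be interpreted weakly in spaces just barely accommodating the multiplier $m_q$. Keeping everything well-defined rests on Theorem~\ref{mV}, which gives $m_q : H^1_{\mathrm{loc}} \to H^{-1/2}_{\mathrm{comp}}$, and on the fact that $K_\zeta$ differs from the classical Green kernel by a smoothing term so that the classical jump formulas for $S_\zeta$ survive. Once these ingredients are in place the structural argument proceeds as in \cite{nachmanrecon,FSU}, but the bookkeeping required to verify that each identity makes sense in the correct dual pairing is the most delicate part.
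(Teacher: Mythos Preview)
Your proposal is correct and reaches the same conclusion, but the route differs from the paper's in two respects.

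First, the logical structure: the paper closes a cycle $(\mathrm{DE})\Rightarrow(\mathrm{BIE})\Rightarrow(\mathrm{EP})\Rightarrow(\mathrm{DE})$, whereas you establish the two biconditionals $(\mathrm{DE})\Leftrightarrow(\mathrm{EP})$ and $(\mathrm{DE})\Leftrightarrow(\mathrm{BIE})$ separately. Your $(\mathrm{BIE})\Rightarrow(\mathrm{DE})$ step is in effect the paper's $(\mathrm{BIE})\Rightarrow(\mathrm{EP})\Rightarrow(\mathrm{DE})$ collapsed into one.

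Second, and more substantively, the derivation of $(\mathrm{DE})\Rightarrow(\mathrm{BIE})$ is different. The paper fixes $x\in\Omega_+$, sets $v(y)=K_\zeta(x,y)$, and applies Green's identity on $\Omega$ to $u$ and $v$; after using the symmetry of $\Lambda_0$ this produces $S_\zeta(\Lambda_0-\Lambda_q)f(x)=-K_\zeta(qu)(x)$ for $x\in\Omega_+$, and the right side is simplified to $e^{x\cdot\zeta}-u(x)$ by recognizing $e^{-x\cdot\zeta}qu=\Delta_\zeta(e^{-x\cdot\zeta}u-1)$ and using that $G_\zeta$ is a left inverse of $\Delta_\zeta$ on $H^1_{-\delta}$. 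Your argument instead builds the auxiliary function $w$ (equal to $u$ outside, harmonic extension inside), reads off $\Delta w=\gamma^*((\Lambda_q-\Lambda_0)f)$ from the jump in normal derivatives, and then invokes a Liouville-type uniqueness for $\Delta_\zeta$ on $H^1_{-\delta}$ to identify $w-e^{x\cdot\zeta}$ with $S_\zeta((\Lambda_q-\Lambda_0)f)$. Both are standard devices in this circle of ideas; the paper's computation is a direct potential-theoretic manipulation, while yours replaces the Green's-formula bookkeeping by a single global uniqueness statement. Your version arguably makes the role of the weighted-space uniqueness more transparent, at the cost of invoking it twice (once for (DE) uniqueness, once here); the paper's version keeps that ingredient implicit inside the identity $G_\zeta\Delta_\zeta=\mathrm{Id}$ on $H^1_{-\delta}$.
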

	\begin{proof}
		(DE) can be rephrased as the problem of finding solutions of the form $u = e^{x\cdot \zeta}(1+r)$ to the equation
		\[
		(-\Delta+q)u = 0 \quad \textrm{in } \R^n, \]
		where $r \in H^1_{-\delta}(\R^n)$. Therefore, (DE) has a unique solution by Theorem \ref{CGO} for $|\zeta|$ sufficiently large. Now we show that (DE) is equivalent to (EP) and (BIE). \\
		
		\noindent \textbf{$(DE) \Rightarrow (BIE)$: } Let $u$ be the solution of (DE) and let $f = u|_{\pa \Omega}$. Clearly, $u \in H^1_{\textrm{loc}}(\R^n)$ and hence $f = \Tr(u) \in H^{1/2}(\pa \Omega)$. Now, fix $x \in \Omega_+$ and define the function $v$ on $\Omega$ by $v(y) = K_\zeta(x,y), \, y \in \Omega$. Since $\Delta v = 0$ in $\Omega$, $v$ is smooth by elliptic regularity. Now, by Green's theorem,
		\[
		\int_{\pa \Omega} (u\pa_\nu v - v\pa_\nu u) d\sigma = \int_\Omega (u\Delta v - v\Delta u). \]
		We know that $\Delta v= 0$ and $\Delta u = qu$. Moreover, since $u,v$ satisfy $(-\Delta +q)u = 0$ and $\Delta v = 0$ in $\Omega$ respectively, $\pa_\nu u = \Lambda_q(u|_{\pa \Omega})$ and $\pa_\nu v = \Lambda_0(v|_{\pa \Omega})$. Substituting these into the above identity, we get
		\begin{eqnarray*}
			\int_{\pa \Omega} u \Lambda_0(v|_{\pa \Omega})\, d\sigma - \int_{\pa \Omega} K_\zeta(x,y) \Lambda_q(f)(y)\, d\sigma(y) &=& -\int_{\Omega} K_\zeta(x,y)(qu)(y)\, dy \\
			\Longrightarrow \int_{\pa \Omega} u \Lambda_0(v|_{\pa \Omega})\, d\sigma -S_{\zeta}\Lambda_qf(x) &=& -K_\zeta(qu)(x).
		\end{eqnarray*}
		Next, by symmetry of $\Lambda_0$, $\int_{\pa \Omega} u\Lambda_0(v|_{\pa \Omega}) \, d\sigma = \int_{\pa \Omega}v\Lambda_0(f)\, d\sigma = S_\zeta \Lambda_0 (f)$. Therefore, the above equation becomes
		\begin{equation}\label{compact}
			S_\zeta(\Lambda_0 - \Lambda_q) f(x) = -K_\zeta(qu)(x), \quad x \in \Omega_{+}. 
		\end{equation}
		Now, we simplify the right hand side. By definition,
		\begin{eqnarray*}
			K_\zeta(qu) &=& e^{x\cdot \zeta}G_\zeta(e^{-x\cdot \zeta}qu) = e^{x\cdot \zeta}G_\zeta(e^{-x\cdot \zeta}\Delta u) \\
			&=& e^{x\cdot \zeta}G_\zeta\circ \Delta_\zeta (e^{-x\cdot \zeta}u) = e^{x\cdot \zeta}G_\zeta\circ \Delta_\zeta (e^{-x\cdot \zeta}u - 1).
		\end{eqnarray*}
		But we know that $e^{-x\cdot \zeta}u - 1 \in H^1_{-\delta}(\R^n)$ and $G_\zeta$ is a right inverse of $\Delta_\zeta$ on $H^1_{-\delta}(\R^n)$. Therefore we get $K_\zeta(qu) = e^{x\cdot \zeta}(e^{-x\cdot \zeta}u-1) =u -e^{x\cdot \zeta}$ and
		\[
		u(x)+S_\zeta(\Lambda_q-\Lambda_0)f (x) = e^{x\cdot \zeta}, \qquad x \in \Omega_{+}. \]
		Taking traces along $\pa \Omega$ on both sides, we get $(\textrm{Id}+\Tr S_\zeta(\Lambda_q-\Lambda_0))f = e^{x\cdot \zeta}$ on $\pa \Omega$, as desired.\\
		
		\noindent $(BIE) \Rightarrow (EP)$: Suppose $f$ solves (BIE). Define 
		\[
		\widetilde{u} := e^{x\cdot \zeta} -S_\zeta(\Lambda_q-\Lambda_0)f. \]
		Clearly, $\widetilde{u}|_{\pa \Omega} = f$ and  $\Delta \widetilde{u} = 0$ on $\R^n \setminus \pa \Omega$. Moreover, (ii) follows from the mapping properties of $S_\zeta$. Next, from the jump properties of single layer potentials, we get
		\[
		(\pa_\nu \widetilde{u})_{-} - (\pa_\nu \widetilde{u})_{+} = -(\Lambda_q-\Lambda_0)f. \]
		Since $\Delta \widetilde{u} = 0$ in $\Omega$, $(\pa_\nu \widetilde{u})_{-} = \Lambda_0(\widetilde{u}|_{\pa \Omega}) = \Lambda_0f$. Therefore, $(\pa_\nu \widetilde{u})_{+} = \Lambda_q f$ and we have verified (iv). Finally, we note that
		\[
		e^{-x\cdot \zeta}\widetilde{u} - 1 = -e^{-x\cdot \zeta}S_\zeta(\Lambda_q-\Lambda_0)f = G_\zeta e^{-x\cdot \zeta}\Tr^* h, \]
		where $h = (\Lambda_0-\Lambda_q)f \in H^{-1/2}(\pa \Omega)$. Since $e^{-x\cdot \zeta}\Tr^* h \in H^{-1}(\R^n)$ is compactly supported, $e^{-x\cdot \zeta}\Tr^*h \in H^{-1}_\delta(\R^n)$ by the usual arguments. Finally, since $G_\zeta : H^{-1}_\delta(\R^n) \to H^1_{-\delta}(\R^n)$, we conclude that $e^{-x\cdot \zeta}\widetilde{u}-1 \in H^1_{-\delta}(\R^n)$.\\
		
		\noindent $(EP) \Rightarrow (DE)$: Let $\widetilde{u}$ solve (EP) and let $v \in H^1(\Omega)$ be the solution of 
		\[
		\left\{
		\begin{array}{rl}
			(-\Delta + q)v =& 0,\\
			v|_{\pa \Omega} =& \gamma_{+}\widetilde{u}.
		\end{array}
		\right.
		\]
		Define $u$ on $\R^n$ by
		\[
		u(x) = \left\{
		\begin{array}{l}
			v(x) \quad \textrm{in } \Omega, \\
			\widetilde{u}(x) \quad \textrm{in} \Omega_{+}.
		\end{array}
		\right.
		\]
		We have $\Tr_{-}(u) = \Tr_{+}(u)$ by construction and $(\pa_\nu u)_{-} =\Lambda_q(\gamma_{+}\widetilde{u}) = (\pa_\nu u)_{+}$ by EP (iv). Therefore, it follows that $u \in H^1_{\textrm{loc}}(\R^n)$ and $(-\Delta+q)u = 0$ in $\R^n$. Finally, $e^{-x\cdot \zeta}u-1 \in H^1_{-\delta}(\R^n)$ because of EP(iii) and the fact that $u = \widetilde{u}$ on $\Omega_{+}$.
	\end{proof}
	
	Let us conclude by showing that the Boundary Integral Equation \eqref{bie} is indeed Fredholm.
	
	\begin{prop}
		Let $q \in H^{-1/2,2n}_{D}(\Omega)$ be such that $0$ is not a Dirichlet eigenvalue of $(-\Delta+m_q)$ on $\Omega$. Then the operator
		\[
		\Tr S_\zeta (\Lambda_q-\Lambda_0): H^{1/2}(\pa \Omega) \to H^{1/2}(\pa \Omega) \]
		is compact. 
	\end{prop}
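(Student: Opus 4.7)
The plan is to factor the operator $\gamma S_\zeta(\Lambda_q - \Lambda_0)$ as a composition of a bounded operator with a compact one. Specifically, I would establish separately that (i) $\gamma S_\zeta : H^{-1/2}(\pa\Omega) \to H^{1/2}(\pa\Omega)$ is bounded and (ii) $\Lambda_q - \Lambda_0 : H^{1/2}(\pa\Omega) \to H^{-1/2}(\pa\Omega)$ is compact; the claim then follows by composition. Statement (i) is essentially packaged into what precedes: the adjoint trace $\gamma^*$ maps $H^{-1/2}(\pa\Omega)$ boundedly into $H^{-1}_{\textrm{comp}}(\R^n)$ (its image being supported on $\pa\Omega$), the operator $K_\zeta$ maps $H^{-1}_{\textrm{comp}}(\R^n) \to H^1_{\textrm{loc}}(\R^n)$ by the preceding proposition, and the interior trace $\gamma$ lands in $H^{1/2}(\pa\Omega)$.

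For the compactness in (ii), I would start from the integral identity of Proposition \ref{integral} with $q_2 = 0$,
\[
\langle (\Lambda_q - \Lambda_0) f, g\rangle_{\pa\Omega} = \langle q u_f, v_0^g\rangle_\Omega,
\]
where $u_f \in H^1(\Omega)$ solves $(-\Delta + q) u_f = 0$ with trace $f$, and $v_0^g \in H^1(\Omega)$ is the harmonic extension of $g$. Picking $\varphi \in C_c^\infty(\Omega)$ with $\varphi \equiv 1$ on $\supp(q)$ (so that $qu_f = q(\varphi u_f)$) and applying Theorem \ref{mV} with $s = 1/2$, $p = 2n$, one obtains
\[
|\langle q u_f, v_0^g\rangle| = |\langle q(\varphi u_f), v_0^g\rangle| \lsim \|\varphi u_f\|_{H^{1/2}(\R^n)} \|v_0^g\|_{H^{1/2}(\Omega)} \lsim \|u_f\|_{H^{1/2}(\Omega)} \|g\|_{H^{1/2}(\pa\Omega)}.
\]
Taking the supremum over $\|g\|_{H^{1/2}(\pa\Omega)} \leq 1$ then gives the key norm bound $\|(\Lambda_q - \Lambda_0) f\|_{H^{-1/2}(\pa\Omega)} \lsim \|u_f\|_{H^{1/2}(\Omega)}$.

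To finish, observe that the linear Schr\"odinger-solution operator $f \mapsto u_f$ is bounded from $H^{1/2}(\pa\Omega)$ into $H^1(\Omega)$ by Proposition \ref{fredholm}, and the embedding $H^1(\Omega) \hookrightarrow H^{1/2}(\Omega)$ is compact by Rellich--Kondrachov. Hence $f \mapsto u_f$ is compact from $H^{1/2}(\pa\Omega)$ into $H^{1/2}(\Omega)$, and the norm estimate above transfers this compactness to $\Lambda_q - \Lambda_0$ (given a bounded sequence $\{f_n\}$, extract a subsequence along which $u_{f_n}$ is Cauchy in $H^{1/2}(\Omega)$, which by linearity forces $(\Lambda_q - \Lambda_0) f_n$ to be Cauchy in $H^{-1/2}(\pa\Omega)$). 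The main technical point to watch is that Theorem \ref{mV} is stated on $\R^n$, whereas $u_f$ and $v_0^g$ live on $\Omega$; the cutoff $\varphi$ together with the definition of the $H^s(\Omega)$ norm by restriction reduces this to the $\R^n$ setting, since $q$ is compactly supported inside $\Omega$.
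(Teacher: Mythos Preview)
Your argument is correct, but it is organized differently from the paper's. The paper does not split the operator as (bounded)$\circ$(compact) at the level of $\gamma S_\zeta$ and $\Lambda_q-\Lambda_0$; instead it invokes the identity derived in the proof of Theorem~\ref{BIE} (the step leading to \eqref{compact}) to rewrite the whole operator as
\[
\gamma S_\zeta(\Lambda_q-\Lambda_0)f \;=\; -\gamma K_\zeta \circ m_q \circ P_q(f),
\]
where $P_q:H^{1/2}(\pa\Omega)\to H^1(\Omega)$ is the Poisson operator for $(-\Delta+q)$. Compactness is then read off from Proposition~\ref{fredholm}(a), which already records that $m_q:H^1(\Omega)\to H^{-1}(\Omega)$ is compact (via the factorization $H^1\to H^{-1/2}_{\textrm{comp}}\hookrightarrow H^{-1}$). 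Your route instead proves the stronger intermediate fact that $\Lambda_q-\Lambda_0$ itself is compact, using Theorem~\ref{mV} at $s=1/2$ together with Rellich on $H^1(\Omega)\hookrightarrow H^{1/2}(\Omega)$. The underlying source of compactness is the same half-derivative gain coming from $q\in W^{-1/2,2n}$; the paper places the compact embedding on the target side of $m_q$, you place it on the source side. The paper's version is shorter because it recycles \eqref{compact}, while yours is more self-contained and does not need that identity.
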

	\begin{proof}
		Let $P_q: H^{1/2}(\pa \Omega) \to H^1(\Omega)$ be the solution operator that maps $f \in H^{1/2}(\pa \Omega)$ to the unique solution $u \in H^1(\Omega)$ of 
		\[
		\left\{
		\begin{array}{rl}
			(-\Delta + q)u =& 0,\\
			u|_{\pa \Omega} =& f.
		\end{array}
		\right.
		\]
		By the same argument as the one leading to \eqref{compact}, we have
		\[
		\Tr S_\zeta(\Lambda_q-\Lambda_0)f = -\gamma K_{\zeta}\circ m_q \circ P_q (f), \qquad f \in H^{1/2}(\pa \Omega). \]
		But the right hand side is compact since $m_q: H^1(\Omega) \to H^{-1}_{D}(\Omega)$ is compact by Proposition \ref{fredholm}(a). This proves the result.
	\end{proof}
	
	\section{Stability}
	In this final section, we will prove the stability estimates \eqref{gamma-stab} and \eqref{q-stab}. Let us start with the stability estimate for the Schr{\"o}dinger equation. Given $q \in H^{-1/2,2n}_{D}(\Omega)$, we define the set of  Cauchy data for $q$ as
	\[
	\mathcal{C}_q = \left\{ \left(u|_{\pa \Omega}, \frac{\pa u}{\pa \nu}\Big|_{\pa \Omega}\right) \in H^{1/2}(\pa \Omega) \times H^{-1/2}(\pa \Omega): (-\Delta+q)u = 0 \right\}. \]
	If $0$ is not a Dirichlet eigenvalue of $(-\Delta+q)$ on $\Omega$, then $\mathcal{C}_q$ is precisely the graph of the Dirichlet-to-Neumann map $\Lambda_q$. Consider the norm on $H^{1/2}(\pa \Omega) \times H^{-1/2}(\pa \Omega)$ given by
	\[
	\|(f,g)\|_{H^{1/2}\oplus H^{-1/2}} = (\|f\|^2_{H^{1/2}(\pa \Omega)} +\|g\|^2_{H^{-1/2}(\pa \Omega)})^{1/2}. \]
	Given $q_1,q_2 \in H^{-1/2,2n}_{D}(\Omega)$, we define the distance between their Cauchy data sets by 
	\begin{equation*}
		\begin{split}
			\textrm{dist}(\mathcal{C}_{q_1},\mathcal{C}_{q_2}) = \max \Bigg\{ \sup_{(f_1,g_1) \in \mathcal{C}_{q_1}} & \inf_{(f_2,g_2) \in \mathcal{C}_{q_2}}\frac{\|(f_1-f_2,g_1-g_2)\|_{H^{1/2}\oplus H^{-1/2}}}{\|(f_1,g_1)\|_{H^{1/2}\oplus H^{-1/2}}},\\ 
			& \sup_{(f_2,g_2) \in \mathcal{C}_{q_2}} \inf_{(f_1,g_1) \in \mathcal{C}_{q_1}}\frac{\|(f_1-f_2,g_1-g_2)\|_{H^{1/2}\oplus H^{-1/2}}}{\|(f_2,g_2)\|_{H^{1/2}\oplus H^{-1/2}}} \Bigg\}.
		\end{split}  
	\end{equation*}
	It can be verified that if $\mathcal{C}_{q_j}$ are in fact the graphs of the Dirichlet-to-Neumann maps $\Lambda_{q_j}$,
	\begin{equation}
		\frac{\|\Lambda_{q_1}-\Lambda_{q_2}\|_{H^{1/2} \to H^{-1/2}}}{\sqrt{1+\|\Lambda_{q_1}\|_{H^{1/2} \to H^{-1/2}}^2}\sqrt{1+\|\Lambda_{q_2}\|_{H^{1/2} \to H^{-1/2}}^2}} \leq \textrm{dist}(\mathcal{C}_{q_1},\mathcal{C}_{q_2}) \leq \|\Lambda_{q_1}-\Lambda_{q_2}\|_{H^{1/2} \to H^{-1/2}}. 
	\end{equation}
	We will establish bounds on $\|q_1-q_2\|_{H^{-1}}$ in terms of $\textrm{dist}(\mathcal{C}_{q_1},\mathcal{C}_{q_2})$, thus including the cases where $0$ is a Dirichlet eigenvalue of one of $(-\Delta+q_j)|_\Omega$. The estimate \eqref{q-stab} follows from the theorem below:
	\begin{theorem}\label{stab-schrodinger}
		Let $0 <s <1/2$ and $q_1,q_2 \in H^{-s,n/s}_{D}(\Omega)$ satisfy the a priori estimate
		\[
		\|q_j\|_{H^{-s,n/s}} \leq M, \qquad j=1,2. \]
		Then there exists $C = C(\Omega, n, s, M)>0$ (independent of $D$) and $\sigma =\sigma(n,s) \in (0,1)$ such that
		\begin{equation}\label{q-stab2}
			\|q_1-q_2\|_{H^{-1}} \leq C(|\log\{\textrm{dist}(\mathcal{C}_{q_1},\mathcal{C}_{q_2})\}|^{-\sigma} +\textrm{dist}(\mathcal{C}_{q_1},\mathcal{C}_{q_2})). 
		\end{equation}
	\end{theorem}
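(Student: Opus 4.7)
The plan is to adapt the Alessandrini-type stability scheme to our low-regularity setting: use CGO solutions to access the low frequencies of $q_1 - q_2$, control the high frequencies via the a priori $W^{-s, n/s}$ bound, and optimize the frequency cutoff in terms of $\epsilon := \textrm{dist}(\mathcal{C}_{q_1}, \mathcal{C}_{q_2})$.

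\emph{Step 1: pointwise bound on $\widehat{q_1 - q_2}(\xi)$.} Fix $\xi \in \R^n$ and $k$ large enough (in particular $k \geq c(1+|\xi|)$) that the CGO construction of Theorem \ref{CGO} applies. Define $\zeta_1, \zeta_2$ by \eqref{zeta1}--\eqref{zeta2}, so $\zeta_j \cdot \zeta_j = 0$, $|\zeta_j| = k$ and $\zeta_1 + \zeta_2 = -i\xi$. Let $u_{\zeta_j} = e^{x\cdot\zeta_j}(1 + r_{\zeta_j})$ be the CGO solutions of $(-\Delta + q_j)u = 0$, with $\|r_{\zeta_j}\|_{H^{1,k}_{-\delta}} \lesssim k^s$. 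Expanding the product $u_{\zeta_1} u_{\zeta_2}$ gives
\[
\widehat{q_1 - q_2}(\xi) = \int_\Omega (q_1 - q_2) u_{\zeta_1} u_{\zeta_2}\, dx - \int_\Omega (q_1 - q_2) e^{-ix\cdot\xi}(r_{\zeta_1} + r_{\zeta_2} + r_{\zeta_1} r_{\zeta_2})\, dx =: B(\xi) - E(\xi).
\]

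\emph{Step 2: bounding $B$ and $E$.} For $B(\xi)$, pick $(f_2, g_2) \in \mathcal{C}_{q_2}$ that is $\epsilon$-close (in $H^{1/2}\oplus H^{-1/2}$) to the Cauchy data of $u_{\zeta_1}|_\Omega$, and let $v \in H^1(\Omega)$ solve $(-\Delta + q_2)v = 0$ realizing this data. Green's identity together with $(-\Delta + q_2) u_{\zeta_2} = 0$ in $\Omega$ then yields
\[
|B(\xi)| \lesssim \epsilon\,\|u_{\zeta_1}\|_{H^1(\Omega)} \|u_{\zeta_2}\|_{H^1(\Omega)} \lesssim \epsilon\, e^{Ck},
\]
since $|e^{x\cdot\zeta_j}| \leq e^{Ck}$ on the bounded set $\Omega$, and the correctors $r_{\zeta_j}$ are polynomial-in-$k$ in Sobolev norms. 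For $E(\xi)$, fix $\varphi \in C_c^\infty(\R^n)$ with $\varphi \equiv 1$ on $\overline{\Omega}$; then the bilinear bound \eqref{bilinear1k} combined with Lemma \ref{lcompact} and Theorem \ref{CGO} gives
\[
\left|\int_\Omega (q_1 - q_2)\, e^{-ix\cdot\xi}\, r_{\zeta_j}\, dx\right| \lesssim k^{-(1-s)} \|\varphi e^{-ix\cdot\xi}\|_{H^1} \|\varphi r_{\zeta_j}\|_{H^{1,k}} \lesssim (1 + |\xi|)\, k^{2s-1},
\]
and \eqref{bilinear1} handles the quadratic piece analogously. Both contributions decay in $k$ precisely because $s < 1/2$.

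\emph{Step 3: frequency splitting and optimization.} Since $q_j$ is compactly supported with $\|q_j\|_{W^{-s, n/s}} \leq M$, a duality argument using the embedding $H^s \hookrightarrow W^{s, (n/s)'}$ on bounded sets (valid as $(n/s)' \leq 2$ for $s \leq n/2$) gives $\|q_1 - q_2\|_{H^{-s}(\R^n)} \lesssim M$. For $\rho \geq 1$,
\[
\|q_1 - q_2\|_{H^{-1}}^2 \leq \int_{|\xi| \leq \rho} \frac{|\widehat{q_1 - q_2}(\xi)|^2}{1 + |\xi|^2}\, d\xi + \rho^{-2(1-s)} \|q_1 - q_2\|_{H^{-s}}^2.
\]
Set $k = \rho^\beta$ for a fixed $\beta > 1/(1 - 2s)$, so that $(1 + \rho)\, k^{2s-1} \lesssim \rho^{-a}$ for some $a > 0$. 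Combining with Step 2, for $|\xi| \leq \rho$ we obtain $|\widehat{q_1 - q_2}(\xi)| \lesssim \epsilon\, e^{C\rho^\beta} + M \rho^{-a}$, so the low-frequency contribution is at most $\rho^{n-2}(\epsilon\, e^{C\rho^\beta} + M\rho^{-a})^2$. Finally choosing $\rho$ with $C\rho^\beta = \tfrac{1}{2}|\log \epsilon|$ for small $\epsilon$ (so that $\epsilon\, e^{C\rho^\beta} = \epsilon^{1/2}$) yields $\rho \sim |\log \epsilon|^{1/\beta}$ and
\[
\|q_1 - q_2\|_{H^{-1}} \lesssim |\log \epsilon|^{-\sigma} + \epsilon^{1/2}
\]
for some $\sigma = \sigma(n, s) \in (0, 1)$. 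Since $\epsilon^{1/2} \lesssim |\log \epsilon|^{-\sigma}$ as $\epsilon \to 0$, and since for $\epsilon$ of order one the a priori bound $\|q_1 - q_2\|_{H^{-1}} \lesssim M$ already implies \eqref{q-stab2}, we are done.

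The main obstacle is balancing the interior decay $k^{2s-1}$ (which degenerates as $s \uparrow 1/2$) against the exponential growth $e^{Ck}$ of the boundary term. This competition forces the frequency cutoff $\rho$ to be only a fractional power of $|\log \epsilon|$ and is responsible for the final exponent $\sigma \in (0, 1)$ depending on both $n$ and $s$; the strict inequality $s < 1/2$ is exactly what allows a positive $a > 0$ in Step 3 and hence a nontrivial logarithmic rate.
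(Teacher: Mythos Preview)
Your approach is essentially the same as the paper's: CGO solutions with parameters \eqref{zeta1}--\eqref{zeta2}, the same splitting $\widehat{q_1-q_2}(\xi)=B(\xi)-E(\xi)$, the Cauchy-data argument for $B$, the bilinear estimates of Theorem~\ref{mV} for $E$, and the frequency cutoff/optimization at the end. The only substantive difference is that the paper controls the $\xi$-dependence in $E(\xi)$ more sharply, obtaining $(1+|\xi|^2)^{s/2}$ instead of your $(1+|\xi|)$ by working at the $H^{s,k}$ level with Peetre's inequality; this leads to an explicit $\sigma=4(1-s)(1-2s)/n$ rather than an unspecified one.

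There is one small gap in your optimization. With $k=\rho^\beta$ and $a=\beta(1-2s)-1$, the low-frequency error contributes $M^2\rho^{\,n-2-2a}$ to $\|q_1-q_2\|_{H^{-1}}^2$. For this to decay you need $2a>n-2$, i.e.\ $\beta>n/(2(1-2s))$, which is strictly stronger than your stated constraint $\beta>1/(1-2s)$ for every $n\ge 3$. The fix is trivial (just take $\beta$ larger), and once you do so your argument goes through and yields some $\sigma=\sigma(n,s)\in(0,1)$ as claimed.
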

	\begin{proof}
		Let $u_1, u_2 \in H^1(\Omega)$ satisfy $(-\Delta+q_j)u_j = 0 $ in $\Omega$, $j=1,2$. By the weak definition of normal trace, we have
		\begin{eqnarray*}
			\int_{\pa \Omega} \left(u_2\frac{\pa u_1}{\pa \nu}-u_1\frac{\pa u_2}{\pa \nu}\right)\, d\sigma &=& \int_\Omega (\nabla u_1\cdot \nabla u_1 +q_1u_1u_2) \, dx\\
			& &  -\int_\Omega (\nabla u_2\cdot \nabla u_1 +q_2u_1u_2 )\, dx \\
			&=& \int_\Omega (q_1-q_2)u_1 u_2 \, dx.
		\end{eqnarray*}
		Suppose $(f,g) \in \mathcal{C}_{q_1}$. Then there exists $v \in H^1(\Omega)$ such that $(-\Delta +q_1)v =0$, and
		\[
		v|_{\pa \Omega} = f, \qquad \frac{\pa v}{\pa \nu}\Big|_{\pa \Omega} = g. \]
		By the same argument as above,
		\[
		0 = \int_{\pa \Omega} (q_1-q_1)u_1 v\, dx = \int_{\pa \Omega} \left(f\frac{\pa u_1}{\pa \nu}-u_1g\right)\, d\sigma \]
		and therefore,
		\[
		\int_{\Omega} (q_1-q_2)u_1u_2\, dx = \int_{\pa \Omega} \left( (u_2-f)\frac{\pa u_1}{\pa \nu} - u_1\left(\frac{\pa u_2}{\pa \nu}-g\right)\right)\, d\sigma. \]
		This implies
		\begin{eqnarray*}
			\left|\int_\Omega (q_1-q_2)u_1u_2\, dx \right| &\leq & \|u_2-f\|_{H^{1/2}(\pa \Omega)}\left\|\frac{\pa u_1}{\pa \nu}\right\|_{H^{-1/2}(\pa \Omega)} + \|u_1\|_{H^{1/2}(\pa \Omega)}\left\|\frac{\pa u_2}{\pa \nu}-g\right\|_{H^{-1/2}(\pa \Omega)}\\
			&\leq & \left\|\left(u_1, \frac{\pa u_1}{\pa \nu}\right)\right\|_{H^{1/2} \oplus H^{-1/2}}\cdot \left\|\left(u_2-f, \frac{\pa u_2}{\pa \nu}-g\right)\right\|_{H^{1/2}\oplus H^{-1/2}}.
		\end{eqnarray*}
		Taking supremum over all $(f,g) \in \mathcal{C}_{q_1}$,
		\begin{equation} \label{uwu1}
			\left|\int_\Omega (q_1-q_2)u_1u_2\, dx \right|  \leq \left\|\left(u_1, \frac{\pa u_1}{\pa \nu}\right)\right\|\cdot \textrm{dist}(\mathcal{C}_{q_1},\mathcal{C}_{q_2}) \cdot \left\|\left(u_2, \frac{\pa u_2}{\pa \nu}\right)\right\|. 
		\end{equation}
		Now, we let $u_1,u_2$ be the CGO solutions constructed in Theorem \ref{CGO}. Choose $k >0, \, \xi \in \R^n \setminus \{0\}$ and let $\alpha, \beta$ be unit vectors in $\R^n$ such that $\{\alpha, \beta, \xi/|\xi|\}$ forms an orthonormal set. Define $\zeta_1, \zeta_2 \in \C^n$ as in \eqref{zeta1}-\eqref{zeta2} and let
		\begin{eqnarray*}
			u_1(x) &=& u_{\zeta_1}(x) = e^{x\cdot \zeta_1}(1+r_1(x)), \\
			u_2(x) &=& u_{\zeta_2}(x) = e^{x\cdot \zeta_2}(1+r_2(x)).
		\end{eqnarray*}
		where $r_j$, $j=1,2$,  satisfy \eqref{rbound}. It follows that 
		\begin{eqnarray*}
			\left\|\left(u_j, \frac{\pa u_j}{\pa \nu}\right)\right\|_{H^{1/2}\oplus H^{-1/2}} &\lsim & \|u_j\|_{H^1(\Omega)} \lsim \|e^{x\cdot \zeta_j}\|_{C^1(\Omega)}\|1+r_j\|_{H^1(\Omega)}\\
			& \lsim & ke^{Rk}(1+k^{s}) \qquad \textrm{where } R = \sup_{x \in \Omega}|x| \\
			&\lsim & e^{Sk}, \qquad \textrm{for some }S >R.
		\end{eqnarray*}
		Substituting in \eqref{uwu1}, we get
		\[
		\left|\int_\Omega (q_1-q_2)u_1u_2\, dx \right| \lsim e^{2Sk}\textrm{dist}(\mathcal{C}_{q_1},\mathcal{C}_{q_2}). \]
		Now consider
		\[
		(\widehat{q}_1-\widehat{q}_2)(\xi) =  \int_\Omega (q_1-q_2)e^{-ix\cdot \xi}\, dx = \int_\Omega (q_1-q_2)(u_1u_2 -e^{-ix\cdot \xi}(r_1+r_2+r_1r_2))\, dx. \]
		This implies
		\begin{equation}\label{uwu2}
			\begin{split}
				|(\widehat{q}_1-\widehat{q}_2)(\xi)| \lsim & \left| \int_\Omega (q_1-q_2)u_1u_2\, dx \right| +|\langle q_1-q_2, e^{-ix\cdot \xi}(r_1+r_2)\rangle|\\
				&+|\langle m_{q_1-q_2}(e^{-ix\cdot \xi}r_1),r_2\rangle|.
			\end{split}
		\end{equation}
		Choose a cut-off function $\varphi \in C_c^\infty(\R^n)$ such that $\varphi \equiv 1$ on $\overline{\Omega}$. By \eqref{sbound}, 
		\[
		|\langle m_{q_j}(e^{-ix\cdot \xi}r_1), r_2 \rangle | \lsim \omega(k)\|e^{-ix\cdot \xi}\varphi r_1\|_{H^s[k]}\|\varphi r_2\|_{H^s[k]} \]
		where $\omega(k) \to 0$ as $k \to \infty$. It is obvious from the proof of Proposition \ref{mV} that
		\[
		\omega(k) \leq \max_{j=1,2}\|q_j\|_{H^{-s,n/s}} \leq M \qquad \textrm{for all }k \geq 1. \]
		Also, for any $f \in H^s[k](\R^n)$,
		\begin{eqnarray*}
			\|e^{-ix\cdot \xi}f\|_{H^s[k]}^2 &=& \frac{1}{(2\pi)^n}\int |\widehat{e^{-ix\cdot \xi}f}(\eta)|^2(k^2+|\eta|^2)^s \, d\eta \\
			&=& \frac{1}{(2\pi)^n}\int |\widehat{f}(\eta+\xi)|^2(k^2+|\eta|^2)^s \, d\eta \\
			&=& \frac{1}{(2\pi)^n} \int |\widehat{f}(\eta)|^2 k^{2s}\left(1 +\frac{|\eta-\xi|^2}{k^2}\right)^s \, d\eta \\
			&\lsim & \int |\widehat{f}(\eta)|^2 k^{2s}\left(1+\frac{|\eta|^2}{k^2}\right)^s\left(1+\frac{|\xi|^2}{k^2}\right)^s \, d\eta \quad \textrm{(Peetre's inequality)}\\
			&\lsim & (1+|\xi|^2)^s \int |\widehat{f}(\eta)|^2 (k^2+|\eta|^2)^s \, d\eta \\
			&\lsim & (1+|\xi|^2)^s\|f\|_{H^s[k]}^2.
		\end{eqnarray*}
		Therefore,
		\begin{eqnarray*}
			|\langle m_{q_j}(e^{-ix\cdot \xi}r_1, r_2 \rangle | &\lsim & M(1+|\xi|^2)^{s/2}\|\varphi r_1\|_{H^s[k]}\|\varphi r_2\|_{H^s[k]} \\
			&\lsim & k^{-2(1-s)}M(1+|\xi|^2)^{s/2}\|r_1\|_{H^1_{-\delta}[k]}\|r_2\|_{H^1_{-\delta}[k]}\\
			&\lsim & k^{-4\epsilon}M(1+|\xi|^2)^{s/2} \qquad \textrm{by }\eqref{rbound}.
		\end{eqnarray*}
		Next, again by \eqref{sbound}, for $j,l=1,2$,
		\begin{eqnarray*}
			|\langle q_j, e^{-ix\cdot \xi}r_l \rangle | &=& |\langle m_{q_j}(\varphi), e^{-ix\cdot \xi}\varphi r_l \rangle | \\
			&\lsim & \omega(k)\|\varphi\|_{H^{s}}\|e^{-ix\dot\xi}\varphi r_l\|_{H^s[k]} \lsim M(1+|\xi|^2)^{s/2}\|\varphi r_l \|_{H^s[k]} \\
			&\lsim &  M(1+|\xi|^2)^{s/2}k^{-1+s}\|r_l\|_{H^1_{-\delta}[k]}\\
			&\lsim & M(1+|\xi|^2)^{s/2}k^{-2\epsilon} \qquad \textrm{by }\eqref{rbound}.
		\end{eqnarray*}
		Substituting all these bounds into \eqref{uwu2}, we get
		\[
		|\widehat{q}_1(\xi)-\widehat{q}_2(\xi)| \lsim e^{2Sk}\textrm{dist}(\mathcal{C}_{q_1},\mathcal{C}_{q_2}) + k^{-2\epsilon}M(1+|\xi|^2)^{s/2}. \]
		We therefore have
		\begin{eqnarray*}
			\|q_1-q_2\|^2_{H^{-1}} &=& \frac{1}{(2\pi)^n}\int_{\R^n} (1+|\xi|^2)^{-1}|\widehat{q}_1(\xi)-\widehat{q}_2(\xi)|^2\, d\xi \\
			&\lsim & \int_{|\xi| \leq \rho} (1+|\xi|^2)^{-1}|\widehat{q}_1(\xi)-\widehat{q}_2(\xi)|^2\, d\xi +\int_{|\xi|>\rho} (1+|\xi|^2)^{-1}|\widehat{q}_1(\xi)-\widehat{q}_2(\xi)|^2\, d\xi \\
			&\lsim & \rho^n e^{4Sk}\textrm{dist}^2(\mathcal{C}_{q_1},\mathcal{C}_{q_2})+k^{-4\epsilon}M^2\rho^{n+2s-2}\\
			& & +\frac{1}{(1+\rho^2)^{1-s}}\int (1+|\xi|^2)^{-s}(\widehat{q}^2_1(\xi)+\widehat{q}^2_2(\xi))\, d\xi\\
			&\lsim & \rho^{n}e^{4Sk}\textrm{dist}^2(\mathcal{C}_{q_1},\mathcal{C}_{q_2}) + M^2 k^{-4\eps}\rho^{n-2\epsilon-1}+M^2\rho^{-1-2\epsilon}.
		\end{eqnarray*}
		In order to make the last two terms small and of the same order in $\rho$, we choose
		\[
		k = \rho^{\frac{n}{4\epsilon}}, \]
		which gives us
		\begin{eqnarray}
			\|q_1-q_2\|^2_{H^{-1}} &\lsim & \rho^n e^{4S\rho^{\frac{n}{4\epsilon}}}\textrm{dist}^2(\mathcal{C}_{q_1},\mathcal{C}_{q_2}) +\rho^{-1-2\epsilon} \\
			&\lsim & e^{T\rho^{\frac{n}{4\epsilon}}}\textrm{dist}^2(\mathcal{C}_{q_1},\mathcal{C}_{q_2}) +\rho^{-1-2\epsilon} \label{penu}
		\end{eqnarray}
		for fixed $T > 4S$. Now choose
		\[
		\rho = \left( \frac{1}{T}|\log\{\textrm{dist}(\mathcal{C}_{q_1},\mathcal{C}_{q_2})\}|\right)^{\frac{4\epsilon}{n}} \]
		so that when $\textrm{dist}(\mathcal{C}_{q_1},\mathcal{C}_{q_2}) < 1$,
		\[
		e^{T\rho^{\frac{n}{4\epsilon}}}\textrm{dist}^2(\mathcal{C}_{q_1},\mathcal{C}_{q_2}) = \textrm{dist}(\mathcal{C}_{q_1},\mathcal{C}_{q_2}). \]
		Combining this with \eqref{penu}, we see that when $\textrm{dist}(\mathcal{C}_{q_1},\mathcal{C}_{q_2}) <1$,
		\begin{eqnarray*}
			\|q_1-q_2\|^2_{H^{-1}} &\lsim & \textrm{dist}(\mathcal{C}_{q_1},\mathcal{C}_{q_2}) + |\log\{\textrm{dist}(\mathcal{C}_{q_1},\mathcal{C}_{q_2})\}|^{-\frac{4\epsilon(1+2\epsilon)}{n}} \\
			&\lsim & |\log\{\textrm{dist}(\mathcal{C}_{q_1},\mathcal{C}_{q_2})\}|^{-\frac{4\epsilon(1+2\epsilon)}{n}}.
		\end{eqnarray*}
		This gives us \eqref{q-stab2} when $\textrm{dist}(\mathcal{C}_{q_1},\mathcal{C}_{q_2})<1$ for $\sigma = 4\epsilon(1+2\epsilon)/n = 4(1-s)(1-2s)/n$. Moreover, \eqref{q-stab2} is trivially true when $\textrm{dist}(\mathcal{C}_{q_1},\mathcal{C}_{q_2})>1$ since $\|q_j\|_{H^{-1}} \lsim \|q_j\|_{H^{-s,n/s}} \leq M$ for $j=1,2$. Therefore, the proof is complete.
	\end{proof}
	We can now prove the stability estimate for the conductivity equation. We will use the fact that $H^{s,p}$ embeds into the Zygmund space $C^t_*$ for $t = s-n/p$.
	
	\begin{theorem}
		Let $0 < s < 1/2$ and $\gamma_1, \gamma_2 \in H^{2-s,n/s}(\Omega)$ be such that $\gamma_j \equiv 1$ in $\Omega \setminus D$ and 
		\[
		0 < c < \gamma_j(x) < c^{-1}, \quad \textrm{for a.e. } x \in \Omega, \,  j=1,2. \]
		Given any $\alpha \in (0,1)$, there exists $C=C(\Omega,n,s,c,M,\alpha)>0$ (independent of $D$) and $\sigma = \sigma(n,s,\alpha) \in (0,1)$ such that
		\begin{equation}\label{xyz}
			\|\gamma_1-\gamma_2\|_{C^\alpha(\overline{\Omega})} \leq C(|\log \|\Lambda_{\gamma_1}-\Lambda_{\gamma_2}\|_{H^{1/2}\to H^{-1/2}}|^{-\sigma} +\|\Lambda_{\gamma_1}-\Lambda_{\gamma_2}\|_{H^{1/2}\to H^{-1/2}}).
		\end{equation}
	\end{theorem}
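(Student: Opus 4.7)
The plan is to reduce to the Schrodinger stability estimate via the familiar substitution $w_j = \sqrt{\gamma_j}$, $q_j = \Delta w_j / w_j$ (so that $\Lambda_{q_j} = \Lambda_{\gamma_j}$ by Proposition \ref{reduction}(c)), and then translate the resulting $H^{-1}$-control of $q_1 - q_2$ back to a $C^\alpha$-control of $\gamma_1 - \gamma_2$. First I would verify that $\|q_j\|_{W^{-s, n/s}} \lsim_{M,c,n,s} 1$, using the decomposition $q_j = \tfrac{1}{2}\Delta\log\gamma_j + \tfrac{1}{4}|\nabla\log\gamma_j|^2$ together with Sobolev embedding and the composition-with-interpolation argument of Proposition \ref{reduction}(a), applied now between $W^{1,n/s}$ and $W^{2,n/s}$. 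Theorem \ref{stab-schrodinger} then gives
\[
\|q_1 - q_2\|_{H^{-1}(\Omega)} \lsim \bigl(|\log \|\Lambda_{\gamma_1} - \Lambda_{\gamma_2}\||^{-\sigma_0} + \|\Lambda_{\gamma_1} - \Lambda_{\gamma_2}\|\bigr)^{1/2}
\]
for some $\sigma_0 = \sigma_0(n,s) \in (0,1)$, where all operator norms on the right are from $H^{1/2}(\pa\Omega)$ to $H^{-1/2}(\pa\Omega)$.

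Next, since $w_j \equiv 1$ near $\pa \Omega$, the difference $w_1 - w_2 \in H^1_0(\Omega)$ satisfies $(-\Delta + q_1)(w_1 - w_2) = (q_2 - q_1)w_2$. Rewriting in the divergence form $-\nabla\cdot(\gamma_1 \nabla(\gamma_1^{-1/2}(w_1-w_2))) = \gamma_1^{1/2}(q_2-q_1)w_2$ via Proposition \ref{reduction}(b) and applying the standard energy estimate (using the ellipticity $\gamma_1 \geq c$ and the Poincare inequality) yields
\[
\|w_1 - w_2\|_{H^1(\Omega)} \lsim_{c,\Omega,M} \|q_1 - q_2\|_{H^{-1}(\Omega)},
\]
where I use that $w_j, \gamma_1^{1/2} \in W^{1,\infty}(\Omega)$ (via the Sobolev embedding $W^{2-s,n/s} \hookrightarrow C^{2-2s}$, available since $s<1/2$) to bound multiplication by $w_2$ and by $\gamma_1^{1/2}$ on $H^{-1}$.

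The final step is to interpolate this $H^1$-bound against the a-priori $W^{2-s, n/s}$-bound on $w_j$ (itself inherited from $\gamma_j$ by composition). Complex interpolation of the Bessel potential spaces $H^1$ and $W^{2-s, n/s}$ with parameter $\theta \in (0,1)$ yields $W^{s_\theta, p_\theta}$ with $s_\theta = 1 + \theta(1-s)$, $1/p_\theta = (1-\theta)/2 + \theta s/n$, so that
\[
s_\theta - \frac{n}{p_\theta} = \Bigl(1 - \frac{n}{2}\Bigr) + \theta\Bigl(1 - 2s + \frac{n}{2}\Bigr).
\]
Since $\alpha < 1 < 2-2s$, one can pick $\theta = \theta(n,s,\alpha) \in (0,1)$ so that this difference equals $\alpha$, and then Sobolev embedding $W^{s_\theta, p_\theta}(\Omega) \hookrightarrow C^\alpha(\overline{\Omega})$ combined with the interpolation inequality gives
\[
\|w_1 - w_2\|_{C^\alpha} \lsim \|w_1 - w_2\|_{H^1}^{1-\theta}\|w_1 - w_2\|_{W^{2-s, n/s}}^{\theta} \lsim_M \|q_1 - q_2\|_{H^{-1}}^{1-\theta}.
\]
Writing $\gamma_1 - \gamma_2 = (w_1 + w_2)(w_1 - w_2)$ and using the uniform $C^\alpha$-bound on $w_1+w_2$, substituting the earlier bound on $\|q_1-q_2\|_{H^{-1}}$, and absorbing the case $\|\Lambda_{\gamma_1}-\Lambda_{\gamma_2}\|\geq 1$ into the a-priori bound gives \eqref{xyz} with $\sigma = \sigma_0(1-\theta)/2$.

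The main obstacle is the last step: ensuring that complex interpolation of Bessel potential spaces, together with the Sobolev embedding $W^{s_\theta, p_\theta}(\Omega) \hookrightarrow C^\alpha(\overline{\Omega})$, really applies on the Lipschitz domain $\Omega$ rather than on $\R^n$. This is handled in the usual way by extending $w_1 - w_2$ from $\Omega$ to all of $\R^n$ (exploiting the fact that $w_j - 1$, hence $w_1 - w_2$, is compactly supported in $\Omega$), and the attendant composition estimates for $\sqrt{\gamma_j}$ and $\log\gamma_j$ follow the template of Proposition \ref{reduction}(a).
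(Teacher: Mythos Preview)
Your proposal is correct and follows the same three-stage architecture as the paper (reduce to the Schr\"odinger estimate, recover $H^1$-control via an elliptic equation, then upgrade to $C^\alpha$), but with two variations worth noting. For the elliptic step, the paper works with $v=\log\gamma_1-\log\gamma_2$, which satisfies the symmetric divergence-form equation $\nabla\cdot((\gamma_1\gamma_2)^{1/2}\nabla v)=2(\gamma_1\gamma_2)^{1/2}(q_1-q_2)$, rather than your $w_1-w_2=\sqrt{\gamma_1}-\sqrt{\gamma_2}$; both routes give $\|\gamma_1-\gamma_2\|_{H^1}\lsim\|q_1-q_2\|_{H^{-1}}$ via a standard energy estimate. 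For the upgrade to $C^\alpha$, the paper takes a more elementary route than your complex interpolation: since $\gamma_j\in W^{1,\infty}$ (from $W^{2-s,n/s}\hookrightarrow C^{1,1-2s}$), H\"older's inequality alone gives $\|\gamma_1-\gamma_2\|_{W^{1,p}}\lsim\|\gamma_1-\gamma_2\|_{H^1}^{2/p}$ for any $p\ge 2$, and then $W^{1,p}\hookrightarrow C^\alpha$ with $p=n/(1-\alpha)$ finishes. This avoids invoking the full $W^{2-s,n/s}$ bound in the interpolation and sidesteps the domain issues you flag at the end. One small slip: Theorem~\ref{stab-schrodinger} gives $\|q_1-q_2\|_{H^{-1}}\lsim|\log\|\Lambda_{\gamma_1}-\Lambda_{\gamma_2}\||^{-\sigma_0}+\|\Lambda_{\gamma_1}-\Lambda_{\gamma_2}\|$ without the extra square root you wrote; this only costs you a factor of $2$ in the final exponent and does not affect correctness.
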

	\begin{proof}
		As in Proposition \ref{reduction}, let us extend $\gamma_j$ to all of $\R^n$ by defining $\gamma_j \equiv 1$ on $\R^n \setminus \Omega$, so that $\gamma_j - 1 \in H^{2-s,n/s}_{D}(\Omega)$. Note that this implies $\gamma_j \in C^{1+\epsilon}_* = C^{1,\epsilon}$ for $\epsilon = 1-2s$.  Define $q_j = \gamma_j^{-1/2}\Delta \gamma_j^{1/2}$. Also choose a bounded domain $U$ such that $\overline{\Omega} \subset U$ and $\pa U$ is smooth. We observe that the function $v = \log \gamma_1 -\log \gamma_2$ solves the following elliptic boundary value problem:
		\begin{equation*}
			\left\{
			\begin{array}{rll}
				\nabla \cdot ((\gamma_1\gamma_2)^{1/2}\nabla v)= & 2(\gamma_1 \gamma_2)^{1/2}(q_2-q_1) &  \textrm{ in } U \\
				v= & 0 &\textrm{ on } \partial U.
			\end{array} \right.
		\end{equation*}
		Therefore, we have the estimate
		\begin{equation*}
			\|\log \gamma_1 - \log \gamma_2\|_{H^1(U)} \lsim \|q_1-q_2\|_{H^{-1}(U)} \lsim \|q_1-q_2\|_{H^{-1}}.
		\end{equation*}
		Now consider the identities
		\begin{eqnarray*}
			\gamma_1 -\gamma_2 &=& \left(\int_0^1 e^{t\log \gamma_1 +(1-t)\log \gamma_2}dt\right)\cdot (\log \gamma_1 -\log \gamma_2), \\
			\nabla \gamma_1 - \nabla \gamma_2 &=& \gamma_1 \nabla \log \gamma_1 - \gamma_2 \nabla \log \gamma_2 = \gamma_1(\nabla \log \gamma_1 -\log \gamma_2) +\frac{\gamma_1-\gamma_2}{\gamma_2}\nabla \gamma_2.
		\end{eqnarray*}
		Together with the fact that $\gamma_j \in C^{1,\epsilon}$, these identities imply that
		\begin{equation}\label{fvd1}
			\|\gamma_1-\gamma_2\|_{H^1(U)} \lsim \|\log \gamma_1 -\log \gamma_2\|_{H^1(U)} \lsim \|q_1-q_2\|_{H^{-1}}. 
		\end{equation}
		Next, recall from Proposition \ref{reduction}(c) that $\Lambda_{\gamma_j} = \Lambda_{q_j}$. By \eqref{q-stab2}, for $\|\Lambda_{\gamma_1}-\Lambda_{\gamma_2}\|_{H^{1/2} \to H^{-1/2}} = \|\Lambda_{q_1}-\Lambda_{q_2}\|_{H^{1/2}\to H^{-1/2}} < 1/2$,
		\[
		\|q_1-q_2\|_{H^{-1}} \lsim |\log \|\Lambda_{\gamma_1}-\Lambda_{\gamma_2}\|_{H^{1/2}\to H^{-1/2}}|^{-\sigma} \]
		which along with \eqref{fvd1} implies
		\begin{equation*}
			\|\gamma_1-\gamma_2\|_{H^1(U)} \lsim |\log \|\Lambda_{\gamma_1}-\Lambda_{\gamma_2}\|_{H^{1/2}\to H^{-1/2}}|^{-\sigma}.
		\end{equation*}
		Now, given $\alpha \in (0,1)$, define $p = n/(1-\alpha)$.  By H{\" o}lder's inequality and the fact that $\gamma_j, \nabla \gamma_j$ are bounded,
		\[
		\|\gamma_1-\gamma_2\|_{H^{1,p}(U)} \lsim \|\gamma_1-\gamma_2\|_{H^1(U)}^{2/p}. \]
		Therefore, whenever $\|\Lambda_{\gamma_1}-\Lambda_{\gamma_2}\|_{H^{1/2} \to H^{-1/2}} < 1/2$,
		\begin{eqnarray*}
			\|\gamma_1-\gamma_2\|_{H^{1,p}(U)} &\lsim & |\log \|\Lambda_{\gamma_1}-\Lambda_{\gamma_2}\|_{H^{1/2}\to H^{-1/2}}|^{-\sigma'}\\
			&\lsim & |\log \|\Lambda_{\gamma_1}-\Lambda_{\gamma_2}\|_{H^{1/2}\to H^{-1/2}}|^{-\sigma'} +\|\Lambda_{\gamma_1}-\Lambda_{\gamma_2}\|_{H^{1/2} \to H^{-1/2}}
		\end{eqnarray*}
		for $\sigma' = \frac{2\sigma}{p} = \frac{8(1-s)(1-2s)(1-\alpha)}{n^2}$. On the other hand, the above estimate is clearly true when $\|\Lambda_{\gamma_1}-\Lambda_{\gamma_2}\|_{H^{1/2} \to H^{-1/2}} \geq 1/2$ due to the fact that $\gamma_j \in H^{1,\infty}$. Therefore, in all cases, we have
		\[
		\|\gamma_1-\gamma_2\|_{H^{1,p}(U)} \lsim |\log \|\Lambda_{\gamma_1}-\Lambda_{\gamma_2}\|_{H^{1/2}\to H^{-1/2}}|^{-\sigma'} +\|\Lambda_{\gamma_1}-\Lambda_{\gamma_2}\|_{H^{1/2} \to H^{-1/2}}. \]
		Finally, \eqref{xyz} follows from the fact that $H^{1,p}(U) \hookrightarrow C^\alpha_*(U) = C^\alpha(U) \hookrightarrow C^\alpha(\overline{\Omega})$.
	\end{proof}
	
	\section*{Acknowledgments}
	
	The author would like to thank Gunther Uhlmann and Hart Smith for their helpful suggestions and comments on the manuscript, as well as their mentorship throughout this project. This project was initiated and largely completed during the author's tenure as a Predoctoral Teaching and Research Assistant in the Department of Mathematics at the University of Washington. At the University of Jyv{\"a}skyl{\"a}, the author was supported by the Research Council of Finland (Flagship of Advanced Mathematics for Sensing Imaging and Modelling grant 359208; Centre of Excellence of Inverse Modelling and Imaging grant 353092; and other grants 358047, 360434).
	
\bibliography{Master_bibfile} 
\bibliographystyle{plain}

\end{document}